\author{Paul \textsc{Poncet}}
\address{CMAP, \'{E}cole Polytechnique, Route de Saclay, 91128 Palaiseau Cedex, France \\
and INRIA, Saclay--\^{I}le-de-France}
\email{poncet@cmap.polytechnique.fr}
\def\twoheaduparrow{\rlap{$\uparrow$}\raise.5ex\hbox{$\uparrow$}}
\newcommand{\argmax}{\operatornamewithlimits{argmax}}
\newcommand{\argmin}{\operatornamewithlimits{argmin}}
\DeclareMathOperator*{\Min}{Min}
\DeclareMathOperator*{\co}{co}
\DeclareMathOperator*{\ex}{ex}
\DeclareMathOperator*{\nsq}{NS_4}
\newtheorem{theorem}{Theorem}[section]
\newtheorem{corollary}[theorem]{Corollary}
\newtheorem{proposition}[theorem]{Proposition}
\newtheorem{lemma}[theorem]{Lemma}
\theoremstyle{definition}
\newtheorem{example}[theorem]{Example}
\newtheorem{remark}[theorem]{Remark}
\newtheorem{problem}[theorem]{Problem}
\newenvironment{acknowledgements}[1][]{\par\vspace{0.5cm}\noindent\textbf{Acknowledgements#1.} }{\par}
\begin{document}

\title{Convexities on ordered structures \\ have their Krein--Milman theorem} 

\date{\today}

\subjclass[2010]{Primary 22A26; 
                 Secondary 52A01,  
                           06A06, 
                           06A12, 
                           06B30, 
                           14T05}  

\keywords{abstract convexity, max-plus convexity, tropical convexity, Krein--Milman theorem, convex geometries, antimatroids, partially ordered sets, semilattices, Lawson semilattices, lattices}

\begin{abstract}
We show analogues of the classical Krein--Milman theorem for several ordered algebraic structures, especially in a semilattice (non-linear) framework. 
In that case, subsemilattices are seen as convex subsets, and for our proofs we use arguments from continuous lattice theory and abstract convexity theory. 
\end{abstract}

\maketitle

\section{Introduction} 

A semilattice is a commutative semigroup $(S, \oplus)$ in which all elements $t$ are idempotent, i.e.\ such that $t \oplus t = t$. Such an $S$ is endowed with a natural partial order defined by $s \leqslant t \Leftrightarrow s \oplus t = t$, so that $s \oplus t$ is the supremum of the pair $\{s, t \}$. 
Semilattices have been widely explored in the last decades; a key result of the theory is the ``fundamental theorem of compact semilattices'', which identifies the category of complete continuous semilattices with that of compact topological semilattices with small semilattices. The statement is due to Hofmann and Stralka \cite{Hofmann76}. Lawson's contribution was decisive for its discovery (see \cite{Lawson69}, \cite{Lawson73}). See also Lea \cite{Lea76} for an alternative proof and Gierz et al.\ \cite[Theorem~VI-3.4]{Gierz03}. 
This theorem draws a link between the \textit{algebraic} and the \textit{topological} natures of semilattices. 

But semilattices can also be regarded as \textit{geometric} objects, where subsemilattices are treated as convex subsets. Surprisingly, this point of view has been hardly considered in the literature. 
Exceptions are the work of Jamison (\cite{Jamison74}, \cite{Jamison77}, \cite[Appendix]{Jamison81}, \cite{Jamison82}) cited by van de Vel (\cite{vanDeVel85}, \cite{vanDeVel93}, \cite{vanDeVel93b}), and a comment by Gierz et al. \cite[p.\ 403]{Gierz03}. 

One reason is certainly that a semilattice with a least element can be seen as a module over the idempotent semifield $\mathbb{B} = \{0, 1\}$. Therefore, it belongs to the more general class of modules over an idempotent semifield $(\Bbbk, \oplus, \times)$ (see \cite{Poncet11}), and it happens that these structures have been deeply studied in the framework of ``max-plus'' or ``tropical'' convexity. We have addressed these aspects in \cite[Chapter~V]{Poncet11}. 

However, semilattices should not be reduced to a special case of modules over an idempotent semifield. Indeed, the use of the set $\mathbb{B}$ as a finite idempotent semifield creates unusual phenomena: for instance, a $\mathbb{B}$-module of finite type is finite; a convex subset of a $\mathbb{B}$-module is not connected in general. 
So one should expect $\mathbb{B}$-modules to have discontinuous behaviour, that one does not usually observe in modules such as $\mathbb{R}_+^n$ (over the idempotent semifield $\mathbb{R}^{\max}_+ = (\mathbb{R}_+, \max, \times)$). 

It is also worth studying semilattices before modules, because if $K$ is a convex subset of an $\mathbb{R}^{\max}_+$-module $M$, then the set $\{ (r . x, r) : x \in K, r \in [0,1] \}$ is a subsemilattice of the semilattice $M \times [0, 1]$. This partly explains why results on semilattices shall be useful for applications to the geometry of $\mathbb{R}^{\max}_+$-modules. 

Other convexities naturally arise on ordered structures such as partially ordered sets, semilattices and lattices. For instance, a semilattice can also be endowed with the convexity made up of its \textit{order-convex} subsemilattices; this case was notably studied by Jamison \cite{Jamison77} and van de Vel (\cite{vanDeVel85}, \cite{vanDeVel93b}, \cite{vanDeVel93}). See also Horvath and Llinares Ciscar \cite{Horvath96}  
and Nguyen The Vinh \cite{NguyenTheVinh05} for investigations on path-connected topological semilattices. 

For this series of convexity structures, we prove analogues of classical results of convex analysis such as the Krein--Milman theorem (Krein and Milman \cite{Krein40}, see also Bourbaki \cite{Bourbaki81}) and Milman's converse \cite{Milman47}. 
For semilattices equipped with the convexity of subsemilattices, our main result is the following: 

\begin{theorem}\label{thm:klee0}
Let $S$ be a locally convex topological semilattice. Then every locally compact, weakly-closed, convex subset of $S$ containing no line is the weakly-closed convex hull of its extreme points. 
\end{theorem}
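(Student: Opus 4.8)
The plan is to reduce the assertion to the compact case, following the pattern of Klee's classical theorem. The first ingredient, which I would take from the earlier sections of the paper, is the Krein--Milman theorem for \emph{compact} subsemilattices: a compact subsemilattice $C$ of a locally convex topological semilattice equals $\overline{\co(\ex C)}$, the closed subsemilattice generated by its sup-irreducible (equivalently, extreme) elements. Its proof is where continuous-lattice theory enters: local convexity endows $C$ with small subsemilattices, so by the fundamental theorem of compact semilattices $C$ is a complete continuous lattice, and in that framework one verifies that the sup-irreducible elements generate $C$ topologically; one also uses the elementary permanence fact that an extreme point of an extreme subset (a face) of $C$ is again an extreme point of $C$.

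For a locally compact, weakly-closed, convex $K$ containing no line, I would establish $K = \overline{\co(\ex K)}$ --- the reverse inclusion being automatic, since $K$ is weakly closed, convex and contains $\ex K$ --- by showing that every $x \in K$ belongs to $\overline{\co(\ex K)}$. To this end I would produce, through each such $x$, a compact convex subset $C \subseteq K$ that is moreover an \emph{extreme subset} of $K$ and contains $x$: a ``cap'', in the language of convexity theory. For such a $C$ one has $\ex C \subseteq \ex K$ by the permanence fact above, and the compact theorem applied to $C$ yields $x \in C = \overline{\co(\ex C)} \subseteq \overline{\co(\ex K)}$, as desired. Here the weak topology is what makes the needed compactness available --- the counterpart of passing to weak compactness in the linear Klee theorem --- while weak-closedness of $K$ keeps the weak closures involved inside $K$; local convexity supplies the separation (enough continuous semilattice morphisms of $S$ into standard semilattices) used to carve $K$ into extreme subsets.

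The main obstacle is the construction of these compact caps, and this is precisely where the ``no line'' hypothesis is indispensable. Without it the statement fails outright: $\mathbb{R}^2$ under coordinatewise supremum is itself a locally convex topological semilattice with empty extreme set, and already its convex subset $\{(x,y) : x \leqslant 0\}$ is weakly closed, locally compact and has no extreme points, so there is nothing to reconstruct it from. I therefore expect to prove a semilattice analogue of the statement ``a line-free locally compact closed convex set is the closed convex hull of the union of its caps'', whose crux is a single sublemma: a nonempty such set possesses a nonempty compact extreme subset. Intuitively, ``no line'' must force the closed sub-chains (or closed extreme subsets) of $K$ to have least elements, and this is the genuine compactness input that both a Zorn's-lemma descent to a minimal nonempty face --- which then supplies an extreme point --- and the cap construction require. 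The remaining, more routine, points are: pinning down ``line'' and ``no line'' intrinsically so that they carry out this boundedness role; the separation/embedding theorem for locally convex topological semilattices; the stability of ``extreme subset'' under the cuts one uses; and the bookkeeping between the given topology, the weak topology and local compactness --- in particular that weak closures of subsemilattices are again subsemilattices.
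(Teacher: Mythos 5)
Your plan hinges on two lemmas, and both fail for the algebraic convexity of a semilattice. The first is the ``permanence fact'' that $\ex C \subset \ex K$ whenever $C$ is a face (compact convex extreme subset) of $K$. This is false: in $K = \{0, a, b, a\oplus b\}$ with $a, b$ incomparable, the set $E = \{a, a\oplus b\}$ is a compact convex extreme subset, and $a \oplus b$ is an extreme point of $E$ but not of $K$. The paper itself warns about exactly this in the remark following Bauer's maximum principle: only \emph{minimal} elements of a face of $K$ are guaranteed to be extreme in $K$. (The inclusion $\ex C \subset \ex K$ does hold when $C$ is a \emph{lower} convex subset such as $K \cap \downarrow\!\! x$, but then your reduction needs $K \cap \downarrow\!\! x$ to be compact, which is the ``principally compact'' hypothesis of the corollary to Theorem~\ref{thm:km}, not a consequence of the hypotheses here.) The second, and more fundamental, problem is the cap-existence sublemma you defer as ``the main obstacle'': it is false that every point of a line-free, locally compact, weakly closed convex $K$ lies in a compact convex extreme subset. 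Take $S = K = \{a_n : n \in \mathbb{N}\} \cup \{\top\}$ with the $a_n$ an infinite antichain, $a_i \oplus a_j = \top$ for $i \neq j$, and the discrete topology: this is a Hausdorff locally compact locally convex topological semilattice, every chain has at most two elements so there is no line, and the theorem holds ($\ex K = \{a_n\}$ and $\co(\ex K) = K$); yet any extreme subset containing $\top$ must contain all but at most one of the $a_n$, hence is infinite and non-compact. So there is no cap through $\top$ at all, and the reduction to the compact case cannot get started.

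The underlying misreading is of what ``contains no line'' buys: it controls \emph{chains} only, and yields no compactness whatsoever for convex or extreme subsets. That is precisely how the paper deploys it in Lemma~\ref{lem:klee}: for each continuous morphism $\varphi : K \to [0,1]$ one takes a \emph{maximal chain} through $x$ in the fibre $K_\varphi = \{u \in K : u \leqslant x,\ \varphi(u) = \varphi(x)\}$; the no-line hypothesis makes that chain relatively compact, closedness of maximal chains makes it compact, Wallace's lemma gives its least element $u_\varphi$, and a direct two-line computation (not a permanence lemma) shows $u_\varphi \in \ex K$. The separation result (Proposition~\ref{prop:sep2}) then turns the family $\{u_\varphi\}_\varphi$ into the identity $x = \bigoplus_K(\ex K \cap \downarrow\!\! x)$, and the passage to the weakly closed convex hull in Theorem~\ref{thm:klee} is a finite-intersection argument with basic weak neighbourhoods of $x$, using the directedness of $\co(\ex K \cap \downarrow\!\! x)$ --- at no point is the compact Krein--Milman theorem invoked. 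Your instinct that ``no line forces chains to have least elements via Wallace's lemma'' is the right ingredient, but it must be applied to chains inside the fibres $K_\varphi$ rather than packaged into caps; as written, the proposal's two load-bearing steps are both false.
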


Local convexity here is another way to say that $S$ has \textit{small semilattices} in the sense of Lawson \cite{Lawson69}. The concept of \textit{line}, which is intuitive in classical analysis, needs to be properly defined in this non-linear context. The \textit{weak topology} refers to the topology generated by the family of continuous semilattice-morphisms from $S$ to $[0,1]$. Because of the fundamental theorem of compact semilattices, our proofs directly or indirecly use methods and elements from domain theory. 

Numerous Krein--Milman theorems have been proved in the literature. Yet they do not enable one to deduce directly the above theorem. 
For instance Fan \cite[Lemma~3]{Fan63} gave a set-theoretic definition of an \textit{extremality}, a concept close to the notion of \textit{face} in classical analysis;  
then he used it to prove an abstract Krein--Milman type theorem (see also \cite[Theorem~IV-2.6]{vanDeVel93}). 
However, his definition and adds-on by others such as Lassak \cite{Lassak86} remain driven by classical convexity theory, where addition is a \textit{cancellative} binary relation; it does not work in an idempotent setting. 

Another result of this kind  
is due to Wieczorek \cite{Wieczorek89}. It requires two conditions: that every singleton be convex, and that the family of upper-semicontinuous \textit{strictly convex} real-valued maps separate convex closed subsets and points (see also \cite[Topic~IV-2.30]{vanDeVel93}). However, for ordered structures, the former condition may not be satisfied (we shall see examples such as the upper convexity on a poset, or the ideal convexity on a semilattice), and the latter seems too complex for practical verification. 

We also examine the case of topological semilattices with finite \textit{breadth} $b$, that happen to be always locally convex. Jamison \cite[Paragraph~4.D]{Jamison82} remarked that breadth coincides with the Carath\'{e}odory number associated with the convexity of subsemilattices. We prove a Minkowski type theorem, which asserts that under appropriate hypothesis every point is the join of at most $b$ extreme points. The \textit{depth} of the semilattice also coincides with an interesting convexity invariant, namely the Helly number, and we establish links between depth and the number of extreme points of a compact convex subset. 

The paper is organized as follows. 
Section~\ref{sec:convth} gives basics of abstract convexity theory. 
In Section~\ref{sec:or} we recall Wallace's lemma on the existence of minimal elements in compact partially ordered sets, which will reveal its importance for the existence of extreme points in compact ordered structures. We also show a Krein--Milman type theorem in partially ordered sets. 
Section~\ref{sec:convsemilat} introduces the main convexity examined in this work, which is the convexity made up of the subsemilattices of a semilattice. We prove that a Krein--Milman type theorem also holds, and see that it essentially comes from the result that coirreducible elements are order-generating in continuous semilattices. An analogous form of Bauer's principle is also proved. 
Section~\ref{sec:convsemilat2} goes one step further: after the work of Klee in classical convex analysis, we prove that the Krein--Milman theorem holds for locally compact weakly-closed convex subsets containing no line, with an adequate definition of line in topological semilattices. Also, Milman's converse is proved. 
Topological semilattices with finite breadth or with finite depth are considered in Section~\ref{sec:fb}. We recall that the breadth and the Carath\'eodory number of a semilattice coincide, and we prove a Minkowski type theorem. 
Other convexities on semilattices and lattices are proposed in Section~\ref{sec:cg}. We provide necessary and sufficient conditions for these convexities to be convex geometries, which is a minimal requirement for Krein--Milman type theorems.

\section{Reminders of abstract convexity}\label{sec:convth}

A collection $\mathrsfs{C}$ of subsets of a set $X$ is a \textit{convexity} (or an \textit{alignment}) on $X$ if it satisfies the following axioms: 
\begin{enumerate}
	\item[-] $\emptyset, X \in \mathrsfs{C}$, 
	\item[-] $\mathrsfs{C}$ is closed under arbitrary intersections, 
	\item[-] $\mathrsfs{C}$ is closed under directed unions. 
\end{enumerate}
The last condition means the following: if $\mathrsfs{D} \subset \mathrsfs{C}$ is such that, for all $C_1, C_2 \in \mathrsfs{D}$, there is some $C \in \mathrsfs{D}$ containing both $C_1$ and $C_2$, then $\bigcup \mathrsfs{D} \in \mathrsfs{C}$. 
The pair $(X,\mathrsfs{C})$ is then a \textit{convexity space}. 
Elements of $\mathrsfs{C}$ are called \textit{convex} subsets of $X$. 
If $A \subset X$, the \textit{convex hull} $\co(A)$ of $A$ is the intersection of all convex subsets containing $A$. 
Convex subsets that are the convex hull of some finite subset are called \textit{polytopes}. They are of special importance for they generate the whole convexity, in the sense that $C \subset X$ is convex if and only if, for 
every finite subset $F$ of $C$, $\co(F) \subset C$. 

The wording of the Krein--Milman theorem includes the notion of \textit{extreme point} of a subset $A \subset X$, which is an element $x$ of $A$ such that $x \notin \co(A \setminus \{ x \})$, or equivalently, if $A$ is convex, such that $A \setminus \{x\}$ is convex. 
The set of extreme points of $A$ is denoted by $\ex A$. 

In practice, $X$ will be a convexity space endowed with a \textit{compatible} topology, that is a topology making every polytope (topologically) closed. Then $X$ will be called a \textit{topological convexity space}. 

For more background on abstract convexity, see the monograph of van de Vel \cite{vanDeVel93}. 
Other attempts and approaches, that we do not consider here, have been made by mathematicians to generalize the concept of convexity; see for instance Singer \cite{Singer97} or Park \cite{Park09}.

\section{Convexities on partially ordered sets}\label{sec:or}

In this section we recall (and discuss) Wallace's lemma (see Wallace \cite[Paragraph~2]{Wallace45}), that we shall use several times later on, and we interpret it as a Krein--Milman type theorem for partially ordered sets. We also prove a converse statement, known as Milman's converse in the framework of locally convex topological vector spaces.

\subsection{Wallace's lemma}

A \textit{partially ordered set} or \textit{poset} $(P,\leqslant)$ is a set $P$ equipped with a reflexive, transitive, and antisymmetric binary relation $\leqslant$. If $A \subset P$, we denote by $\downarrow\!\! A$ the \textit{lower} subset generated by $A$, i.e.\ $\downarrow\!\! A := \{ x \in P : \exists a \in A, x \leqslant a\}$, and we write $\downarrow\!\! x$ for the \textit{principal ideal} $\downarrow\!\!\{x\}$. \textit{Upper} subsets $\uparrow\!\! A$ and \textit{principal filters} $\uparrow\!\! x$ are defined dually. 
A topology on a poset is \textit{lower semiclosed} (resp.\ \textit{upper semiclosed}) if each principal ideal (resp.\ principal filter) is a closed subset. It is \textit{semiclosed} if it is both lower semiclosed and upper semiclosed. 

Note that our definition of a \textit{compact} subset of a topological space does not assume Hausdorffness. 

\begin{proposition}[Wallace's lemma, \protect{\cite[Paragraph~2]{Wallace45}}]
Let a poset be equip\-ped with a lower semiclosed topology. Then every nonempty compact subset has a minimal element. 
\end{proposition}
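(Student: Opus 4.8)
The plan is to use Zorn's lemma applied downward, with compactness providing the crucial lower bound for chains. Let $K$ be a nonempty compact subset of the poset $P$, where $P$ carries a lower semiclosed topology (so each principal ideal $\downarrow\!\! x$ is closed). First I would consider the collection of nonempty closed subsets of $K$ of the form $K \cap \downarrow\!\! x$ for $x \in K$, or more generally the family $\mathcal{F}$ of all nonempty subsets of the form $K \cap \downarrow\!\! A$ that are ``$\leqslant$-cofinal from below'' — but it is cleaner to argue via chains directly. Take any nonempty chain $C \subseteq K$ (totally ordered under $\leqslant$). I claim $C$ has a lower bound in $K$. Consider the family $\{ K \cap \downarrow\!\! x : x \in C \}$ of closed subsets of $K$; each is nonempty (it contains $x$), and by total order of $C$ this family is downward directed, hence has the finite intersection property. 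By compactness of $K$, the intersection $\bigcap_{x \in C} (K \cap \downarrow\!\! x)$ is nonempty; any element of it is an element of $K$ that is below every $x \in C$, i.e.\ a lower bound for $C$ in $K$.

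Having shown every chain in $K$ has a lower bound in $K$, I would apply Zorn's lemma to the poset $(K, \geqslant)$ (i.e.\ $K$ ordered by the reverse relation): since $K$ is nonempty and every chain has an upper bound in the reversed order, $(K,\geqslant)$ has a maximal element, which is precisely a minimal element of $K$ with respect to $\leqslant$. This completes the argument.

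The main obstacle — really the only subtle point — is justifying that the family $\{ K \cap \downarrow\!\! x : x \in C \}$ has the finite intersection property and that each member is closed. Closedness is immediate from the hypothesis that the topology is lower semiclosed (intersecting the closed set $\downarrow\!\! x$ with $K$), together with the convention that we work relative to $K$; and the finite intersection property follows because for $x_1, \dots, x_n \in C$, total ordering of the chain gives a least element $x_j$ among them, so $x_j \in \bigcap_i (K \cap \downarrow\!\! x_i)$. One should note that no Hausdorff assumption on $K$ is needed, in accordance with the remark preceding the statement: the finite-intersection-property characterization of compactness holds for arbitrary (not necessarily Hausdorff) compact spaces. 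This is why the lemma is robust enough to serve, later in the paper, as the engine behind the existence of extreme points in compact ordered structures.
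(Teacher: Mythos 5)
Your argument is correct and is essentially the standard proof of Wallace's lemma (the paper itself only cites \cite[Paragraph~2]{Wallace45} and the Zorn/Hausdorff-maximality proof in \cite[Proposition~VI-5.3]{Gierz03} rather than reproducing it): the key step, that a chain $C\subseteq K$ has a lower bound in $K$ because the closed sets $K\cap\downarrow\!\!x$, $x\in C$, have the finite intersection property, is exactly the intended mechanism, and your appeal to Zorn's lemma is consistent with the paper's later remark that Wallace's lemma for all posets carries the full strength of the axiom of choice. No gaps.
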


We take advantage of this reminder to stress that we found no explicit statement in the literature of the following equivalence. 
Recall first that the Ultrafilter Principle (alias the Prime Ideal Theorem), which says that every filter on a set is contained in an ultrafilter, is strictly weaker than the axiom of choice. 

\begin{proposition}
Wallace's lemma for all posets together with the Ultrafilter Principle are equivalent to the axiom of choice. 
\end{proposition}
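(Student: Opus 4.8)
The plan is to prove the two implications separately. The forward implication is immediate: the Ultrafilter Principle is a well-known consequence of the axiom of choice, and Wallace's lemma is a theorem of $\mathsf{ZFC}$ — its usual proof applies Zorn's lemma to the order dual, the lower bounds for chains being produced from compactness together with lower semiclosedness. (Concretely, if $K$ is a nonempty compact subset and $C \subseteq K$ is a chain, the family of closed sets $K \cap \downarrow\! x$, for $x \in C$, has the finite intersection property, hence nonempty intersection, and any point of it is a lower bound for $C$ lying in $K$; Zorn's lemma applied to $(K,\geqslant)$ then gives a minimal element.) So the substance of the statement is the reverse implication: \emph{assuming Wallace's lemma for all posets and the Ultrafilter Principle, one must derive the axiom of choice}.

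For this I would reduce the axiom of choice to the existence of a maximal partial choice function. Let $(X_i)_{i \in I}$ be a family of nonempty sets, form the disjoint union $Y = \{(i,x) : i \in I,\ x \in X_i\}$ (no choice is needed here), and write $X_i' = \{i\} \times X_i$. Consider $\{0,1\}^Y$, identified with the power set of $Y$ and equipped with the product topology, each factor discrete. By the Ultrafilter Principle this space is compact — recall that the Ultrafilter Principle implies (indeed is equivalent to) the compactness of $\{0,1\}^Z$ for every set $Z$. Let
\[ P = \{ A \subseteq Y : |A \cap X_i'| \leqslant 1 \text{ for all } i \in I \} \]
be the set of partial choice functions, viewed as graphs. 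One checks that $P$ is closed in $\{0,1\}^Y$: for fixed $i$ the complement of the condition $|A \cap X_i'| \leqslant 1$ is a union of basic open sets, so $P$ is an intersection of closed sets and is therefore compact.

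Now order $P$ by reverse inclusion $\supseteq$. For this order the principal ideal of an element $A$ is $\{B \in P : A \subseteq B\} = \bigcap_{y \in A}\{B \in P : y \in B\}$, an intersection of relatively clopen subsets of $P$, hence closed; so the subspace topology makes $(P,\supseteq)$ a poset with lower semiclosed topology. Since $P$ is nonempty and compact, Wallace's lemma yields a $\supseteq$-minimal element $A_0$, that is, an inclusion-maximal partial choice function. If some $X_{i_0}'$ did not meet $A_0$, then — picking a single element $x$ of the nonempty set $X_{i_0}$, which requires no choice — the set $A_0 \cup \{(i_0,x)\}$ would be a strictly larger partial choice function, contradicting maximality. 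Hence $A_0 \cap X_i'$ is a singleton for every $i$, so $A_0$ is the graph of a choice function for $(X_i)_{i \in I}$; as the family was arbitrary, the axiom of choice follows.

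The main obstacle is getting the topology right: Wallace's lemma only produces minimal elements, so one must pass to the opposite (reverse-inclusion) order, and then simultaneously arrange that the product topology on the space of partial choice functions is compact — which is exactly where the Ultrafilter Principle is used — and lower semiclosed for that opposite order, i.e. that up-sets in the inclusion order are closed. A secondary point requiring care is to verify that inclusion-maximality of a partial choice function forces totality \emph{without} covertly invoking choice; this works precisely because extending a partial choice function at a single missing index only requires one nonempty set to be inhabited.
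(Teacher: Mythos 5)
Your proof is correct, but the sufficiency direction takes a genuinely different route from the paper's. The paper derives Hausdorff's maximality principle: it takes the collection of all chains of a given poset containing a fixed chain $L$, orders it by reverse inclusion, observes that this is a complete semilattice and hence compact in the Lawson topology (compactness resting on Alexander's subbase lemma, itself a consequence of the Ultrafilter Principle), and then invokes Wallace's lemma to produce a minimal element, i.e.\ a maximal chain. You instead derive the axiom of choice directly in its choice-function form: you realize the partial choice functions for a family $(X_i)_{i\in I}$ as a closed subset of the Cantor cube $\{0,1\}^Y$, obtain compactness from the standard equivalence of the Ultrafilter Principle with compactness of $\{0,1\}^Z$, pass to reverse inclusion so that Wallace's minimal elements become inclusion-maximal partial choice functions, and check that maximality forces totality without any hidden use of choice. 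Both arguments share the same skeleton --- the Ultrafilter Principle supplies compactness of a space of ``partial approximations'' ordered by reverse inclusion, and Wallace's lemma supplies a maximal one --- but yours is more elementary and self-contained, avoiding the domain-theoretic machinery of complete semilattices and the Lawson topology, at the cost of importing the (standard, citable) equivalence between the Ultrafilter Principle and compactness of Cantor cubes; the paper's version stays inside the order-theoretic framework it has already set up and is correspondingly shorter. Your verification of the forward direction is also more explicit than the paper's, which simply cites the proof of Wallace's lemma in Gierz et al.
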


\begin{proof}
Necessity is made clear by the proof of \cite[Proposition~VI-5.3]{Gierz03}, which makes use of Hausdorff's maximality principle to prove Wallace's lemma. For sufficiency, let $P$ be a poset, and let $L$ be a linearly ordered subset (or \textit{chain}) of $P$. Let $\mathrsfs{L}$ be the (nonempty) collection of chains of $P$ containing $L$, ordered by reverse inclusion. Then $\mathrsfs{L}$ is a complete semilattice (i.e.\ a semilattice in which every nonempty subset has a supremum and every filtered subset has an infimum, see Section~\ref{sec:convsemilat}), hence is compact when equipped with the Lawson topology (see \cite[Theorem~III-1.9]{Gierz03}; its proof uses Alexander's lemma, which itself is known to be implied by the Ultrafilter Principle). By Wallace's lemma, $\mathrsfs{L}$ has a minimal element, i.e.\ there is a maximal chain in $P$ containing $L$. This proves Hausdorff's maximality principle, which is equivalent to the axiom of choice. 
\end{proof}

\subsection{Krein--Milman theorems for posets}

Actually, the result \cite[Proposition~VI-5.3]{Gierz03}, used in the previous proof, refines Wallace's lemma: under the same hypothesis, it concludes that, if $K$ is a compact subset and $x \in K$, there is some minimal element of $K$ below $x$. We interpret this version as a Krein--Milman type theorem for partially ordered sets endowed with the \textit{upper convexity} made up of upper subsets (see Edelman and Jamison \cite[Theorem~3.2]{Edelman85} for a characterization of this convexity). 
In this setting, extreme points of a convex subset $K$ coincide with its minimal elements $\Min K$, and if $K$ is compact convex, then $K = \co(\ex K) = \uparrow\!\! (\Min K)$ (see Figure~\ref{fig:Schema1Chapitre4}). Note the absence of topological closure in this equality. 

\begin{figure}
	\centering
		\includegraphics[width=0.7\textwidth]{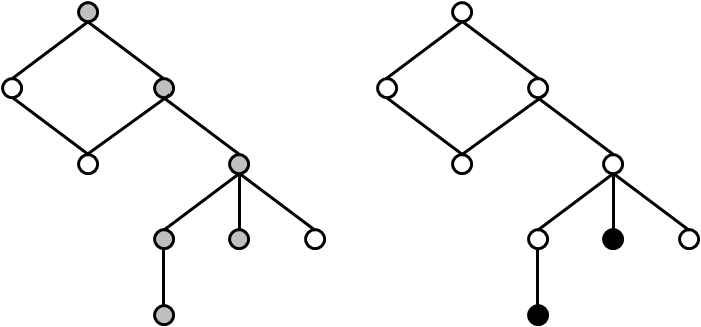}
	\caption{Hasse diagram of a finite partially ordered set (with the discrete topology). The gray points (on the left) define a convex subset (with respect to the upper convexity); the black points (on the right) are its minimal elements. }
	\label{fig:Schema1Chapitre4}
\end{figure}

\begin{theorem}[Krein--Milman for posets I]\label{thm:kmp}
Consider a poset with the upper (resp.\ lower) convexity, and equipped with a lower semiclosed (resp.\ an upper semiclosed) topology. Then every compact subset $K$ satisfies 
\[
\co(K) = \co(\ex K). 
\]
\end{theorem}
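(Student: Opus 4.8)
The plan is to reduce the statement to the refined version of Wallace's lemma quoted just above, namely that under a lower semiclosed topology every compact subset $K$ contains, for each $x \in K$, a minimal element of $K$ below $x$. I will treat the upper convexity case; the lower convexity case follows by the order-dual argument. First I would recall that in a poset with the upper convexity the convex sets are exactly the upper subsets, so $\co(K) = \,\uparrow\!\! K$ and, since $\ex K$ will turn out to be $\Min K$, the claimed equality $\co(K) = \co(\ex K)$ is the same as $\uparrow\!\! K = \,\uparrow\!\!(\Min K)$.

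Next I would identify the extreme points. For a convex (i.e.\ upper) set $A$, an element $x$ is extreme iff $A \setminus \{x\}$ is again convex, i.e.\ again an upper subset; this fails precisely when some $y < x$ lies in $A$, because then removing $x$ destroys upward-closedness from $y$. Hence $x$ is extreme in $A$ iff $x$ is minimal in $A$, so $\ex A = \Min A$. Applying this to $A = \co(K) = \,\uparrow\!\! K$ and noting $\Min(\uparrow\!\! K) = \Min K$, we get $\ex(\co(K)) = \Min K$. (One should also remark that $\Min K \subseteq K$ and that extreme points of $K$ in the sense $x \notin \co(K\setminus\{x\})$ coincide with $\Min K$ by the same reasoning, so there is no ambiguity in what $\ex K$ means.)

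Then I would prove the two inclusions of $\uparrow\!\! K = \,\uparrow\!\!(\Min K)$. The inclusion $\uparrow\!\!(\Min K) \subseteq \,\uparrow\!\! K$ is immediate since $\Min K \subseteq K$. For the reverse inclusion, take $x \in \,\uparrow\!\! K$, so there is $k \in K$ with $k \leqslant x$; by the refined Wallace lemma applied to the compact set $K$ and the point $k$, there is $m \in \Min K$ with $m \leqslant k \leqslant x$, whence $x \in \,\uparrow\!\!(\Min K)$. Combining, $\co(K) = \,\uparrow\!\! K = \,\uparrow\!\!(\Min K) = \co(\Min K) = \co(\ex K)$, which is the assertion.

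The only genuine subtlety — the place where the topological hypothesis is actually used — is the appeal to the refined Wallace lemma, i.e.\ the existence of a minimal element of $K$ below a prescribed point; this rests on $K$ being compact and the topology lower semiclosed (so that $\downarrow\!\! x \cap K$ is a nonempty compact subset to which Wallace's lemma applies). Everything else is formal manipulation of the upper convexity and its polytopes. A minor point worth spelling out is the compatibility of the convexity with the topology (that polytopes are closed), which is needed only to make the framework of Section~\ref{sec:convth} apply and is again guaranteed by lower semiclosedness, since a polytope $\co(F) = \,\uparrow\!\! F$ for finite $F$ is a finite union of principal filters — here one would instead invoke upper semiclosedness, so strictly speaking this last remark is not required for the statement as phrased and can be omitted.
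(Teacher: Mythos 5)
Your proof is correct and follows essentially the same route as the paper: identify the convex sets as the upper sets and the extreme points as the minimal elements, then apply the refined form of Wallace's lemma (obtained by applying Wallace's lemma to the nonempty compact set $K \cap \downarrow\!\! x$) to conclude $K \subset \,\uparrow\!\!(\Min K)$ and hence $\co(K) = \,\uparrow\!\! K = \co(\ex K)$. The extra details you supply (the explicit verification that $\ex A = \Min A$ and the remark on closedness of polytopes) are accurate but do not change the argument.
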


\begin{proof}
A direct consequence (and actually, an equivalent form) of Wallace's lemma is the following: if $K$ is a compact subset and $x \in K$, there is some minimal element of $K$ below $x$. To see this, it suffices to apply Wallace's lemma to the nonempty compact subset $K \cap \downarrow\!\! x$. 
Then $K \subset \uparrow\!\! (\Min K) = \co(\ex K)$, so that $\co(K) = \uparrow\!\! K = \co(\ex K)$. 
\end{proof}

Franklin \cite{Franklin62}, Baker \cite{Baker69} or Jamison \cite{Jamison81} preferentially considered po\-sets endowed with their \textit{order convexity}. This convexity, introduced by Birkhoff \cite{Birkhoff48}, is  generated by intervals $[x, y] = \uparrow\!\! x \cap \downarrow\!\! y = \{ z : x \leqslant z \leqslant y\}$. See Jamison \cite{Jamison79} for various characterizations of order convexity. See also Birkhoff and Bennett \cite{Birkhoff85}. 
Here, convex subsets are subsets of the form $\uparrow\!\! A \cap \downarrow\!\! A$, extreme points are the elements $e$ such that $e \in [x, y] \Rightarrow e \in \{x, y\}$, i.e.\ are either minimal elements or maximal elements, and Franklin \cite[Theorem~III]{Franklin62} and Baker \cite[Theorem~1]{Baker69} proved that a Krein--Milman type theorem also holds.  

\begin{theorem}[Krein--Milman for posets II, \protect{\cite[Theorem~III]{Franklin62}} and \protect{\cite[Theorem~1]{Baker69}}]\label{thm:kmpo}
Consider a poset with the order convexity, and equipped with a semiclosed topology. Then every compact subset $K$ satisfies 
\[
\co(K) = \co(\ex K). 
\]
\end{theorem}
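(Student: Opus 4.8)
The plan is to push the argument of Theorem~\ref{thm:kmp} through in both directions simultaneously, so that the only input needed is Wallace's lemma together with its order dual. The first step is to pin down the extreme points: for a compact subset $K$ and $x \in K$, one has $x \notin \ex K$ exactly when $x \in \co(K \setminus \{x\}) = \uparrow\!\!(K \setminus \{x\}) \cap \downarrow\!\!(K \setminus \{x\})$, i.e.\ when $x$ is neither a minimal nor a maximal element of $K$; hence $\ex K = \Min K \cup \Max K$. Since $\ex K \subseteq K$ and $\co$ is monotone, $\co(\ex K) \subseteq \co(K)$ is immediate, and it remains to establish the inclusion $K \subseteq \co(\ex K)$, after which $\co(K) \subseteq \co(\co(\ex K)) = \co(\ex K)$ finishes the proof.

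For the inclusion $K \subseteq \co(\ex K)$, I would fix $x \in K$ and work separately below and above $x$. The set $K \cap \downarrow\!\! x$ is nonempty (it contains $x$) and, the topology being lower semiclosed, it is a closed subset of the compact set $K$, hence itself compact; Wallace's lemma then supplies a minimal element $m$ of $K \cap \downarrow\!\! x$, and a one-line check (any $y \in K$ with $y \leqslant m$ satisfies $y \leqslant x$, so lies in $K \cap \downarrow\!\! x$ and equals $m$) shows $m \in \Min K$. Dually, the topology being upper semiclosed, the order dual of Wallace's lemma applied to the compact set $K \cap \uparrow\!\! x$ produces some $M \in \Max K$ with $x \leqslant M$. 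Then $x \in \uparrow\!\! m \cap \downarrow\!\! M \subseteq \uparrow\!\!(\ex K) \cap \downarrow\!\!(\ex K) = \co(\ex K)$, and since $x$ was arbitrary we get $K \subseteq \co(\ex K)$.

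I do not expect a genuine obstacle here; the content is entirely carried by Wallace's lemma, and what makes the two-sided argument go through is that the order convexity is self-dual while a semiclosed topology is by definition both lower and upper semiclosed. The only points that deserve a word of care are formal ones: that a closed subset of a compact set is compact even in the absence of Hausdorffness (needed to see $K \cap \downarrow\!\! x$ and $K \cap \uparrow\!\! x$ are compact), and the verification that the minimal (resp.\ maximal) element extracted inside $K \cap \downarrow\!\! x$ (resp.\ $K \cap \uparrow\!\! x$) is minimal (resp.\ maximal) in all of $K$. One could alternatively deduce the statement by intersecting the conclusions of Theorem~\ref{thm:kmp} for the upper and the lower convexities, but the direct route above seems shorter.
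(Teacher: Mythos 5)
Your proof is correct. The paper does not actually prove this statement itself---it is quoted from Franklin and Baker---but your argument is exactly the two-sided version of the paper's own proof of Theorem~\ref{thm:kmp}: the identification $\ex K = \Min K \cup \Max K$, then Wallace's lemma applied to the compact set $K \cap \downarrow\!\! x$ and its order dual applied to $K \cap \uparrow\!\! x$, which is precisely what the semiclosed hypothesis is there to permit.
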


See also Wirth \cite[Theorem~1]{Wirth74} for a Krein--Milman type theorem in certain posets equipped with the open-interval topology. 

It is remarkable that, in Theorems~\ref{thm:kmp} and \ref{thm:kmpo}, the Krein--Milman property holds without any local convexity hypothesis. Local convexity is certainly automatic in every compact \textit{pospace} (defined as a poset $P$ equipped with a topology making the partial order closed in $P \times P$), as is well known since the work of Nachbin \cite{Nachbin65}, but Theorems~\ref{thm:kmp} and \ref{thm:kmpo} do not need this assumption.

\subsection{Milman's converse}

In classical convex analysis, Milman's theorem \cite{Milman47} is probably as important as the Krein--Milman theorem itself, for it asserts that the representation of a compact convex subset as the closed convex hull of its extreme points is, in some sense, optimal. That is, for every such representation, the ``representing'' subset, if closed, contains the subset of extreme points. 
Fortunately, a similar result holds in pospaces. For the next assertion, we write $\overline{A}$ for the topological closure of a subset $A$.

\begin{theorem}[Milman for posets]
Let $P$ be a pospace with the upper (resp.\ lower, order) convexity, and $K$ be a closed convex subset of $P$. Then, for every compact subset $A$ of $K$ such that $K =  \overline{\co}(A)$, we have $A \supset \ex K$. 
\end{theorem}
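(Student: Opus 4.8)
The plan is to reduce, in each of the three cases, the closed convex hull $\overline{\co}(A)$ to an explicit order-theoretic set, and then invoke the identification of the extreme points of $K$ with its minimal and/or maximal elements (recorded just before the statement).

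First I would establish the one genuinely topological ingredient: in a pospace $P$, if $A$ is compact then $\uparrow\!\! A$ and $\downarrow\!\! A$ are closed. Writing $R = \{(x,y) \in P \times P : x \leqslant y\}$, which is closed by definition of a pospace, one has $\uparrow\!\! A = \pi_2\bigl((A \times P) \cap R\bigr)$, where $\pi_2$ denotes the second projection; since $A$ is compact, the projection $\pi_2 \colon A \times P \to P$ along the compact factor is a closed map, so the image of the closed set $(A \times P) \cap R$ is closed in $P$. The argument for $\downarrow\!\! A$ is symmetric. Consequently $\co(A)$ is already closed for the upper, the lower, and the order convexity (being $\uparrow\!\! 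A$, $\downarrow\!\! A$, and $\uparrow\!\! A \cap \downarrow\!\! A$ respectively), hence $\overline{\co}(A) = \co(A)$, and the hypothesis reads $K = \co(A)$.

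Now the core argument. For the upper convexity, let $e \in \ex K$, which equals $\Min K$. Then $e \in K = \uparrow\!\! A$, so there is $a \in A$ with $a \leqslant e$; but $a \in A \subset K$ and $e$ is minimal in $K$, so $a = e$, whence $e \in A$. Thus $\ex K \subset A$. The lower convexity is handled dually, with $\Min$ replaced by $\Max$ and $\uparrow\!\! A$ by $\downarrow\!\! A$. For the order convexity, $K = \uparrow\!\! A \cap \downarrow\!\! A$ and $\ex K$ consists of the elements of $K$ that are minimal or maximal in $K$: if $e \in \ex K$ is minimal in $K$, then $e \in \uparrow\!\! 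A$ yields $a \in A$ with $a \leqslant e$, hence $a = e \in A$; if $e$ is maximal in $K$, then $e \in \downarrow\!\! A$ yields $a \in A$ with $e \leqslant a$, hence $a = e \in A$.

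The only step requiring care is the closedness of $\uparrow\!\! A$ and $\downarrow\!\! A$ for compact $A$, and this is precisely where compactness of the representing subset $A$ enters — exactly as in the classical Milman theorem, where compactness is needed to extract a limit point. It rests on the closed-graph description of pospaces together with the fact that projection along a compact factor is a closed map. Everything else is a direct translation of the identification of extreme points with extremal (minimal or maximal) elements into each of the three convexities.
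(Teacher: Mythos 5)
Your proof is correct and follows essentially the same route as the paper: identify $\overline{\co}(A)$ with $\uparrow\!\! A$ (resp.\ $\downarrow\!\! A$, $\uparrow\!\! A \cap \downarrow\!\! A$) by showing these sets are already closed for compact $A$ in a pospace, then use the identification of extreme points with minimal/maximal elements. The only differences are cosmetic: you prove the closedness of $\uparrow\!\! A$ inline via the closed-graph description and projection along a compact factor, where the paper simply cites Gierz et al., Proposition~VI-1.6(ii), and you write out all three convexities where the paper treats only the upper one and leaves the rest to symmetry.
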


\begin{proof}
We consider the case of upper convexity only. 
Since $P$ is a pospace and $A$ is compact in $P$, $\co(A) = \uparrow\!\! A$ is closed in $P$ by \cite[Proposition~VI-1.6(ii)]{Gierz03}, hence $K = \uparrow\!\! A$. Thus, $\ex K = \Min K = \Min (\uparrow\!\! A) \subset A$. 
\end{proof}

\section{The algebraic convexity of a semilattice}\label{sec:convsemilat}

\subsection{Introduction}

A \textit{semilattice} $S$ is a poset in which every nonempty finite subset $F$ has a supremum, denoted by $\bigoplus_S F$ (or by $\bigoplus F$ when the context is clear). If $x, y \in S$, we write $x \oplus y$ for $\bigoplus \{x, y\}$. 

We endow the semilattice $S$ with its \textit{algebraic convexity} made up of its subsemilattices, i.e.\ the subsets $T$ of $S$ such that $x \oplus y \in T$ whenever $x, y\in T$ (in particular the empty set is a subsemilattice). 
We shall also say that subsemilattices are \textit{convex} subsets of $S$. 
If $A \subset S$, the \textit{convex hull} $\co(A)$ of $A$ is the subsemilattice generated by $A$ (see Figure~\ref{fig:Schema2Chapitre4}). 

\begin{figure}
	\centering
		\includegraphics[width=0.7\textwidth]{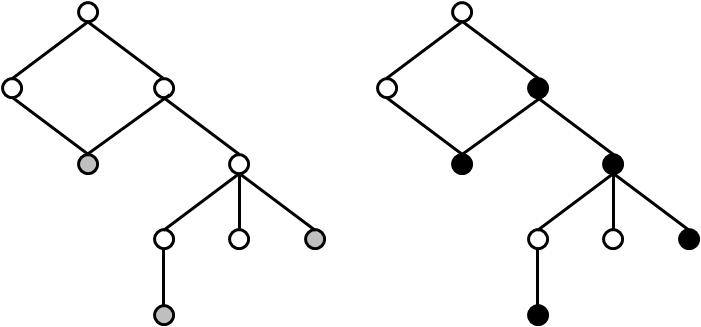}
	\caption{Hasse diagram of a finite semilattice. The gray points (on the left) define a subset; the black points (on the right) are its convex hull (with respect to the algebraic convexity), which here is not connected. }
	\label{fig:Schema2Chapitre4}
\end{figure}

The algebraic convexity of a semilattice deserves special attention, for it has been hardly considered in the literature. Exceptions are the work of Jamison (\cite{Jamison74}, \cite{Jamison77}, \cite[Appendix]{Jamison81}, \cite{Jamison82}) and a comment by Gierz et al. \cite[p.\ 403]{Gierz03}. 
However, recall from the Introduction that a semilattice with a bottom element is equivalently described as a $\mathbb{B}$-module with $\mathbb{B} = \{0, 1\}$, thus is a special case of module over an idempotent semifield. Consequently, the algebraic convexity of a semilattice with a bottom element is the same as the \textit{tropical convexity} of the associated $\mathbb{B}$-module. Tropical convexity has been the subject of a great amount of research, and we refer the reader to \cite[Chapter~V]{Poncet11} for background and references. 

It should be stressed that other interesting convexities can be defined on semilattices, for instance the \textit{ideal convexity} 
consisting of lower subsemilattices, or the \textit{order-algebraic convexity} made up of \textit{order-convex} subsemilattices, that is subsemilattices $T$ such that $x \leqslant y \leqslant z$ and $x, z \in T$ imply $y \in T$. Information on the latter convexity may be gathered from Jamison \cite{Jamison77} and van de Vel (\cite{vanDeVel85}, \cite{vanDeVel93b}, \cite{vanDeVel93}), and we shall discuss several convexities in more detail in Section~\ref{sec:cg}. 

If $K$ is a subset of the semilattice $S$, then $x \in K$ is an extreme point of $K$ if and only if $x$ is \textit{coirreducible} in $K$, i.e., for every nonempty finite subset $F$ of $K$, $x = \bigoplus F \Rightarrow x \in F$ (see Figure~\ref{fig:Schema3Chapitre4}). 

The semilattice $S$ is \textit{topological} if it is endowed with a Hausdorff topology such that $S \times S \ni (x, y) \mapsto x \oplus y \in S$ is continuous (where $S \times S$ is equipped with the product topology). Be careful that, in \cite{Gierz03}, a topological semilattice is not supposed Hausdorff, although this hypothesis is made in all other references cited in this work. 
A topological semilattice $S$ can then be seen as a topological convexity space, in which the topological closure of every convex subset remains convex (this is what van de Vel called \textit{closure stability} \cite[Definition~III-1.7]{vanDeVel93}). This latter property can be easily proved using nets. 
Also, $S$ is \textit{locally convex} if every point has a basis of convex neighbourhoods, that is if it has small semilattices in the sense of Lawson \cite{Lawson67} (see also Gierz et al.\ \cite[Definition~VI-3.1]{Gierz03}).

\subsection{Compact local convexity or complete continuity?}

At this stage it is worth recalling the fundamental theorem of compact semilattices (see Hofmann and Stralka \cite[Theorem~2.23]{Hofmann76}, Lea \cite[Theorem]{Lea76}, and Gierz et al.\ \cite[Theorem~VI-3.4]{Gierz03}). 
For this purpose we briefly recall some basic definitions of continuous poset theory. 
A subset $F$ of a poset $(P,\leqslant)$ is \textit{filtered} if it is nonempty and, for all $x, y \in F$, there is a lower bound of $\{x,y\}$ in $F$. 
We say that $y \in P$ is \textit{way-above} $x\in P$, written $y \gg x$, if, for every filtered subset $F$ with an infimum $\bigwedge F$, $x \geqslant \bigwedge F$ implies $y \in \uparrow\!\! F$. 
The poset $P$ is \textit{continuous} if $\twoheaduparrow x := \{ y \in P : y \gg x \}$ is filtered and $x = \bigwedge \twoheaduparrow x$, for all $x \in P$. A \textit{domain} is a continuous poset in which every filtered subset has an infimum. A domain that is also a semilattice is a \textit{continuous semilattice}. 
A semilattice is \textit{complete} if every nonempty subset has a supremum and every filtered subset has an infimum. 

Intervals of (extended) real numbers, with the usual order, for instance $[0,1]$, $[0,1)$, $(0,1)$, are all continuous posets, and the way-above relation coincides with the strict order $>$, except at the top element when it exists (e.g.\ $1 \gg 1$ in $[0,1]$). All these examples are also semilattices, but only $[0,1]$ and $[0,1)$ are domains (thus continuous semilattices), and $[0,1]$ is the only complete semilattice (or complete lattice). 

A subset $A$ of the poset $P$ is \textit{Scott-open} if it is lower and if, whenever $\bigwedge F \in A$ for some filtered subset $F$ of $P$ with infimum, then $F \cap A \neq \emptyset$. The collection of Scott-open subsets of $P$ is a topology, called the \textit{Scott topology}. 
The \textit{Lawson topology} on $P$ is then the topology generated by the Scott topology and the subsets of the form $P \setminus \downarrow\!\! x$, $x \in P$. 

Here comes the announced fundamental theorem of compact semilattices (we skip the identification of morphisms between the two categories at stake). 

\begin{theorem}\cite[Theorem~VI-3.4]{Gierz03}\label{thm:fond}
\begin{enumerate}
	\item Let $K$ be a complete continuous semilattice. Then, with respect to the Lawson topology $K$ is a compact locally convex topological semilattice. 
	\item Conversely, let $K$ be a compact locally convex topological semilattice. Then, with respect to its semilattice structure $K$ is a complete continuous semilattice. Furthermore, the topology of $K$ is the Lawson topology. 
\qed
\end{enumerate}
\end{theorem}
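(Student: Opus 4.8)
The plan is to prove the two implications separately, keeping the order-theoretic core (which rests on Wallace's lemma and on elementary properties of the relation $\gg$) apart from the topological one. For~(1), equip the complete continuous semilattice $K$ with the Lawson topology $\lambda(K)$. Hausdorffness follows from the interpolation property of $\gg$ (a standard consequence of continuity): if $x\neq y$, say $y\not\leqslant x$, then since $x=\bigwedge\twoheaduparrow x$ there is $u\gg x$ with $y\not\leqslant u$; the set $\{z:u\gg z\}$ is Scott-open, contains $x$ and is contained in $\downarrow\!\! u$, while the Lawson-open set $K\setminus\downarrow\!\! u$ contains $y$ and is disjoint from it. Compactness is obtained from Alexander's subbasis lemma applied to the Lawson subbasis (the Scott-open sets together with the complements $K\setminus\downarrow\!\! x$ of principal ideals): in a subbasic cover with no finite subcover, the finite subinfima of the removed points $x_j$ form a filtered family with infimum $m=\bigwedge_j x_j$; since $m$ lies below every $x_j$ it must be covered by some Scott-open $U_{i_0}$, hence already some finite subinfimum $\bigwedge_{j\in J_0}x_j$ belongs to $U_{i_0}$, and then $\{U_{i_0}\}\cup\{K\setminus\downarrow\!\! x_j:j\in J_0\}$ is a finite subcover because Scott-open sets are lower sets --- a contradiction. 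Continuity of $\oplus$ is verified on the subbasis: the preimage of $K\setminus\downarrow\!\! x$ is a union of two half-boxes (since $a\oplus b\leqslant x$ iff $a\leqslant x$ and $b\leqslant x$), and the preimage of a Scott-open set is Scott-open in $K\times K$ because $\oplus$ preserves filtered infima and the Scott topology of a product of domains is the product topology. Finally, for local convexity: given $x$ inside a basic Lawson-open set $A\cap\bigcap_{i\leqslant n}(K\setminus\downarrow\!\! y_i)$ with $A$ Scott-open, pick $u\gg x$ with $u\in A$; then $\downarrow\!\! u\cap\bigcap_{i\leqslant n}(K\setminus\downarrow\!\! y_i)$ is a subsemilattice, lies inside the given set (as $\downarrow\!\! u\subseteq A$), and is a neighbourhood of $x$.

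For~(2), the harder Hofmann--Stralka direction, let $(K,\tau)$ be a compact locally convex topological semilattice. First, $\tau$ is semiclosed: $\downarrow\!\! x$ is the preimage of $\{x\}$ under $y\mapsto x\oplus y$, and $\uparrow\!\! x=x\oplus K$ is compact, hence closed. Next, $K$ is a complete semilattice: for nonempty $A\subseteq K$ the net $(\bigoplus F)$ indexed by the finite nonempty subsets $F$ of $A$ (directed by inclusion) has, by compactness, a convergent subnet whose limit must be the least upper bound of $A$ --- an upper bound by upper semiclosedness, below every upper bound by lower semiclosedness --- so $\bigvee A$ exists and the whole net converges to it; dually, every filtered subset $F$ has an infimum, realized as the limit of the net $(f)_{f\in F}$.

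The heart of (2) is to show $K$ is continuous. Fix $x$. By local convexity together with regularity and closure stability, the $\tau$-closed subsemilattices that are neighbourhoods of $x$ form a neighbourhood basis; any such $V$ is closed under arbitrary nonempty suprema (apply the convergence just described to nets drawn from $V$, then use that $V$ is closed), hence has a greatest element $v_V:=\bigvee V\in V$. One checks $v_V\gg x$: given a filtered $F$ with $\bigwedge F\leqslant x$, continuity of $\oplus$ gives $f\oplus x\to(\bigwedge F)\oplus x=x\in\mathrm{int}\,V$, so $f_0\oplus x\in V$ for some $f_0\in F$, whence $f_0\leqslant f_0\oplus x\leqslant v_V$ and $v_V\in\uparrow\!\! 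F$. The net $(v_V)_V$ is monotone decreasing, so by the completeness established above it converges to $\bigwedge_V v_V$; but it also converges to $x$ (each $v_V\in V$ and the $V$ form a basis), so $\bigwedge_V v_V=x$ by Hausdorffness, and since each $v_V\in\twoheaduparrow x$ this forces $x=\bigwedge\twoheaduparrow x$. That $\twoheaduparrow x$ is filtered is elementary: for $u_1,u_2\gg x$ the meet $u_1\wedge u_2$ exists ($x$ being a common lower bound) and is again $\gg x$, since a filtered set admits common refinements. Thus $K$ is a complete continuous semilattice. It remains to identify the topologies: $\tau$ contains the lower topology (by semiclosedness) and the Scott topology (by a standard convergence argument valid in every compact topological semilattice), so $\tau\supseteq\lambda(K)$; since $K$ is now continuous, part~(1) makes $\lambda(K)$ compact Hausdorff, and therefore the identity map $(K,\tau)\to(K,\lambda(K))$ is a continuous bijection from a compact space onto a Hausdorff one, hence a homeomorphism, so $\tau=\lambda(K)$.

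I expect the main obstacle to be the continuity step of~(2): squeezing the approximation identity $x=\bigwedge\twoheaduparrow x$ out of mere local convexity. The decisive move is to pass from a $\tau$-closed convex neighbourhood $V$ of $x$ to its (automatically existing) greatest element, which turns out to lie way-above $x$, and then to let $V$ shrink; convergence of the nets $(\bigoplus F)$ and continuity of $\oplus$ are what make this work. The subsidiary fact that the topology of an arbitrary compact topological semilattice refines its Lawson topology I would import from the continuous-lattice literature rather than reprove here.
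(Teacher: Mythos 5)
The paper does not actually prove this statement: it is quoted from Gierz et al.\ [Theorem~VI-3.4] with a terminal QED box in the statement itself, and is used as a black box throughout (the text even says it skips the categorical part). So the only meaningful comparison is with the literature, and there your sketch follows the standard Hofmann--Stralka/Lawson line faithfully: Lawson compactness via Alexander's lemma, local convexity from sets of the form $\downarrow\!\! u$ with $u \gg x$, completeness of a compact topological semilattice from convergence of the monotone nets of finite joins, and --- the genuine core --- the approximation identity $x = \bigwedge \twoheaduparrow x$ extracted from the greatest elements of shrinking closed convex neighbourhoods, followed by the compact-onto-Hausdorff comparison of the two topologies. Two places deserve a caveat. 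First, in the Alexander argument the finite infima $\bigwedge_{j \in J_0} x_j$ need not exist: a complete semilattice in the paper's sense (all nonempty suprema, all filtered infima) can lack binary meets --- take $\{a,b,\top\}$ with $a,b$ incomparable below $\top$ --- so you must first split off the case where some finite subfamily of the $x_j$ has no common lower bound (then the corresponding complements $K \setminus \downarrow\!\! x_j$ already cover $K$); only in the remaining case do the finite infima exist, form a filtered family, and admit an infimum $m$ to which your inaccessibility argument applies. Second, the imported fact that the given topology of a compact topological semilattice refines the Scott topology is not a triviality: it is roughly as deep as the approximation step and is the other half of what makes $\tau \supseteq \lambda(K)$ true, so it should be cited precisely rather than waved at (it is where meet-continuity of the compact semilattice enters). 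With these two repairs your sketch is a correct, essentially self-contained route to a theorem the paper merely cites.
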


We warn the reader that, considering a locally convex topological semilattice with a complete semilattice structure, the previous theorem cannot be used to assert that $S$ is continuous, nor that the topology is the Lawson topology. 

\begin{problem}
Gierz et al.\ asserted that a (not necessarily complete) continuous semilattice is a \textit{strictly locally convex}  topological semilattice (meaning that every point has a basis of convex open neighbourhoods) for the Lawson topology (see \cite[Exercise~III-2.17]{Gierz03}). Is there any kind of converse statement? 
\end{problem}

\subsection{The Krein--Milman theorem}

With the correspondence given by the fundamental theorem~\ref{thm:fond}, we now prove an analogue of the Krein--Milman theorem for semilattices: 

\begin{theorem}[Krein--Milman for semilattices]\label{thm:km}
Let $S$ be a locally con\-vex topological semilattice. Then every nonempty compact subset of $S$ has at least one extreme point, and every compact convex subset of $S$ is the closed convex hull of its extreme points. 
\end{theorem}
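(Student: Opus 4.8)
The plan is to reduce the general statement to the compact case and invoke the fundamental theorem~\ref{thm:fond}. First I would observe that it suffices to prove the second assertion: if $K$ is a nonempty compact convex subset and $K=\overline{\co}(\ex K)$, then $\ex K$ cannot be empty (since the closed convex hull of the empty set is empty, because $\emptyset$ is convex and closed), so $K$ has an extreme point; and for an arbitrary nonempty compact subset $A$, one passes to $K:=\overline{\co}(A)$, which is compact (the closed convex hull of a compact set in a locally convex topological semilattice is compact — this follows from the same continuous-lattice machinery, or one can restrict attention from the outset to $\overline{\co}(A)$ as a subsemilattice) and convex, and an extreme point of $K$ lying in $A$ is then an extreme point of $A$.

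Next I would set up the key reduction: let $K$ be a nonempty compact convex subset of the locally convex topological semilattice $S$. Then $K$, with the induced order, is itself a subsemilattice, and with the subspace topology it is a compact locally convex topological semilattice in its own right; by Theorem~\ref{thm:fond}(2) it is therefore a complete continuous semilattice whose topology is the Lawson topology. Now I invoke the structural fact from continuous-lattice theory — alluded to in the introduction (``coirreducible elements are order-generating in continuous semilattices'') — namely that in a complete continuous semilattice every element is the infimum of a filtered set of way-above elements, and one deduces that the set of \emph{coirreducible} elements is order-generating, i.e.\ every element $x$ of $K$ satisfies $x=\bigwedge(\uparrow\!\!x\cap \overline{\co(\operatorname{Coirr} K)})$ or, more directly, that $x$ lies in the closed convex hull of the coirreducible elements of $K$. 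Since the excerpt has already identified the extreme points of $K$ with its coirreducible elements, this gives $K\subset \overline{\co}(\ex K)$, and the reverse inclusion is immediate because $\ex K\subset K$ and $K$ is closed and convex.

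The one delicate point I would want to handle carefully is the passage from ``every element is an infimum of way-above elements'' to ``every element is in the closed convex hull of coirreducible elements.'' The argument I have in mind: given $x\in K$ and a Scott-open (hence Lawson-open) neighbourhood $U$ of $x$, pick $y\gg x$ with $y\in U$; using completeness and Zorn's lemma (or Wallace's lemma, Proposition above, applied to the appropriate compact set $\{z\le y : z\gg x\}$ under a suitable semiclosed topology) select a coirreducible element $e$ with $x\le e\le y$; one then checks $e\in U$ as well — this uses that $U$ can be taken of a form that is closed under the relevant operations, which is exactly where local convexity (small semilattices) and the identification of the topology with the Lawson topology enter. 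Iterating over a neighbourhood basis of $x$ and taking the appropriate join/limit of finitely many such $e$'s places $x$ in $\overline{\co}(\ex K)$.

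I expect the main obstacle to be precisely this last step: verifying that coirreducible (equivalently, extreme) elements are order-generating and, more than that, ``topologically generating'' in the sense needed, i.e.\ that the closure of their convex hull recaptures all of $K$. In the classical Krein--Milman theorem this is the heart of the matter (finding extreme points via compactness), and here it is delegated to continuous-semilattice theory; the work is in citing or re-deriving the statement that in a complete continuous semilattice the coirreducibles are order-generating (a standard but nontrivial fact from domain theory, e.g.\ via the way-below relation and Scott-compactness), and then bridging the gap between order-generation and closed-convex-hull generation using the Lawson topology and local convexity.
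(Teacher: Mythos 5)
Your treatment of the second assertion follows the same skeleton as the paper's proof (restrict attention to $K$, invoke Theorem~\ref{thm:fond}(2) to see that $K$ is a complete continuous semilattice, quote the fact that coirreducibles are order-generating, then upgrade order-generation to membership in the closed convex hull), but both halves of your argument have genuine gaps. The reduction of the existence statement to the representation statement does not go through: you need (i) that $\overline{\co}(A)$ is compact for $A$ compact, which you assert ``follows from the same machinery'' but never establish --- in a general locally convex topological semilattice this is not obvious (the paper proves such statements only under extra hypotheses, e.g.\ finite breadth in Lemma~\ref{lem:b}, or local compactness in Lemma~\ref{lem:compact}); and (ii) that some extreme point of $K=\overline{\co}(A)$ actually lies in $A$, which is precisely Milman's converse --- a separate, nontrivial theorem proved later in the paper using the separation results of Section~\ref{sec:convsemilat2}. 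The intended proof of the first assertion is a one-liner that bypasses all of this: in a Hausdorff topological semilattice each $\downarrow\!\! x$ is closed (it is the preimage of $\{x\}$ under the continuous map $y\mapsto y\oplus x$), so Wallace's lemma yields a minimal element of any nonempty compact subset, and a minimal element is automatically coirreducible, hence extreme.

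The step you yourself single out as delicate --- passing from $x=\bigoplus_K(\ex K\cap\downarrow\!\! x)$ to $x\in\overline{\co}(\ex K)$ --- is also left open: your sketch via Scott-open neighbourhoods never verifies that $U$ can be chosen ``closed under the relevant operations,'' and the final iteration/limit is unspecified, so as written the bridge is not built. The paper's route is cleaner and purely order-theoretic: set $T=\overline{\co(\ex K\cap\downarrow\!\! x)}$; by closure stability $T$ is a closed subsemilattice of the compact topological semilattice $K$, and by \cite[Proposition~VI-2.9]{Gierz03} a closed subsemilattice of a compact topological semilattice is stable under arbitrary nonempty suprema formed in $K$, so $x=\bigoplus_K(\ex K\cap\downarrow\!\! x)\in T\subset\overline{\co}(\ex K)$. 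I would replace your neighbourhood argument with that citation; with these two repairs your outline coincides with the paper's proof.
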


\begin{figure}
	\centering
		\includegraphics[width=0.7\textwidth]{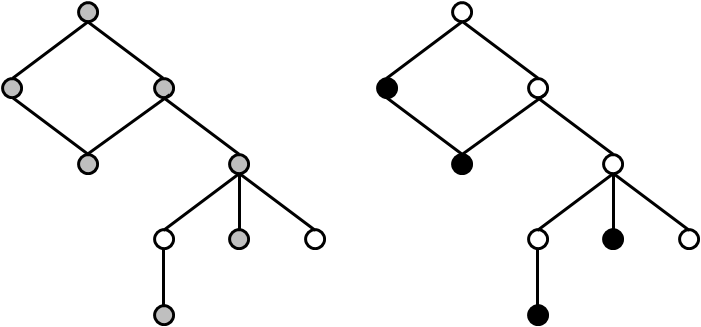}
	\caption{Hasse diagram of a finite semilattice. The gray points (on the left) define a subsemilattice; the black points (on the right) are its coirreducible elements. }
	\label{fig:Schema3Chapitre4}
\end{figure}

\begin{proof}
The former assertion is a direct consequence of Wallace's lemma since every minimal point is extreme.  
The latter comes from an interpretation of \cite[Corollary~I-3.10]{Gierz03}. 
Let $K$ be a nonempty compact convex subset of $S$. By \cite[Proposition~VI-3.2(i)]{Gierz03}, since $S$ is a topological semilattice with small semilattices, $K$ is, in its own right, a compact topological semilattice with small semilattices when equipped with the relative topology. Hence by the fundamental theorem of compact semilattices, $K$ is a complete continuous semilattice. 

Now, a consequence of \cite[Corollary~I-3.10]{Gierz03} is that, in the continuous semilattice $K$, the subset of coirreducible elements (i.e., extreme points) of $K$ is order-generating (see also Hofmann and Lawson \cite[Proposition~2.7]{HofmannLawson76}). This means that every $x$ in $K$ equals $\bigoplus_K (\ex K \cap \downarrow\!\! x)$, where the supremum is taken in $K$. 

To conclude the proof, let $T$ be the topological closure in $S$ of the subsemilattice $\co(\ex K \cap \downarrow\!\! x)$ of $K$.  Since $K$ is closed in $S$, $T$ is also closed in $K$. By the closure stability property (see the Introduction of Section~\ref{sec:convsemilat}), $T$ is then a closed subsemilattice of the compact semilattice $K$.  
 By \cite[Proposition~VI-2.9]{Gierz03}, $T$ is stable by suprema in $K$ of nonempty subsets, hence $x = \bigoplus_K (\ex K \cap \downarrow\!\! x)$ is in $T$.  This proves that $x \in \overline{\co}(\ex K)$, so that $K = \overline{\co}(\ex K)$. 
\end{proof}

\begin{remark}
We can weaken the assumptions of Theorem~\ref{thm:km}, and only suppose that $S$ is a locally convex Hausdorff \textit{semitopological} semilattice, i.e.\ a semilattice equipped with a locally convex Hausdorff topology and a separately continuous addition. 
Indeed, \cite[Theorem~VII-4.8]{Gierz03} then ensures that every compact convex subset of $S$ is still a \textit{topological} semilattice (see also the original paper by Lawson \cite{Lawson74} on semitopological semigroups). 
\end{remark}

The hypothesis of the preceding theorem can be weakened in a different manner.  
We say that a subset $K$ of a semilattice is \textit{principally compact} if $K \cap \downarrow\!\! x$ is compact for all $x \in K$. 

\begin{corollary}
Let $S$ be a locally convex topological semilattice. Then every non\-empty principally compact subset of $S$ has at least one extreme point, and every principally compact closed convex subset of $S$ is the closed convex hull of its extreme points. 
\end{corollary}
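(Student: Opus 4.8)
The plan is to reduce the corollary to Theorem~\ref{thm:km} by a localization argument, exploiting the fact that the algebraic convexity is generated by its polytopes and that extremality is detected inside principal ideals. Let $K$ be a nonempty principally compact subset of $S$. First I would prove the existence of an extreme point: pick any $x \in K$, so that $K \cap \downarrow\!\! x$ is a nonempty compact subset of $S$; by Wallace's lemma (the lower semiclosed topology is automatic in a Hausdorff topological semilattice, since principal ideals $\downarrow\!\! y = \{z : z \oplus y = y\}$ are closed), $K \cap \downarrow\!\! x$ has a minimal element $e$. The point is that $e$ is then minimal, hence extreme, in all of $K$: if $e = \bigoplus F$ for a finite $F \subset K$, then each element of $F$ lies in $\downarrow\!\! e \subset \downarrow\!\! x$, so $F \subset K \cap \downarrow\!\! x$, and minimality of $e$ there forces $e \in F$. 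Thus $\ex K \neq \emptyset$.

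For the representation statement, let $K$ be a principally compact closed convex subset of $S$ and fix $x \in K$. Set $L := K \cap \downarrow\!\! x$. This $L$ is a compact subset of $S$, and it is convex: it is the intersection of the subsemilattice $K$ with the subsemilattice $\downarrow\!\! x$. Moreover $L$ is closed in $S$ (being compact in a Hausdorff space), so $L$ is a compact convex subset of the locally convex topological semilattice $S$. Applying Theorem~\ref{thm:km} to $L$, we get $L = \overline{\co}(\ex L)$, where the closure is taken in $S$ (equivalently in $L$). Now I claim $\ex L \subset \ex K$: if $e \in \ex L$ and $e = \bigoplus F$ for some finite $F \subset K$, then as before $F \subset \downarrow\!\! e \subset \downarrow\!\! x$, so $F \subset L$ and coirreducibility of $e$ in $L$ yields $e \in F$; hence $e$ is coirreducible in $K$, i.e.\ $e \in \ex K$. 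Since also $\ex L \subset L \subset K$, we obtain $\ex L \subset \ex K \cap \downarrow\!\! x$, and therefore
$$
x \in L = \overline{\co}(\ex L) \subset \overline{\co}(\ex K \cap \downarrow\!\! x) \subset \overline{\co}(\ex K).
$$
As $x \in K$ was arbitrary, $K \subset \overline{\co}(\ex K)$; the reverse inclusion holds because $\ex K \subset K$, $K$ is convex, and $K$ is closed (using closure stability, $\overline{\co}(\ex K)$ is a closed subsemilattice contained in $K$). Hence $K = \overline{\co}(\ex K)$.

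The main obstacle I anticipate is making sure the inclusion $\ex L \subset \ex K$ is genuinely valid, i.e.\ that passing to the principal ideal does not create spurious extreme points or lose genuine ones in a way that breaks the argument; the key observation making it work is that whenever $e = \bigoplus F$ one automatically has $F \subset \downarrow\!\! e$, so testing coirreducibility of a point $e \le x$ never requires looking outside $\downarrow\!\! x$. A secondary point to be careful about is the interplay of closures: one should check that the closure of $\co(\ex L)$ computed inside the subspace $L$ agrees with its closure in $S$ — this is immediate since $L$ is closed in $S$ — and that $\overline{\co}(\ex K)$ really is contained in $K$, which is exactly the closure stability property recorded in the introduction of Section~\ref{sec:convsemilat} together with the fact that $K$ is a closed subsemilattice. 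Everything else is routine.
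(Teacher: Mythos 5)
Your proof is correct and follows essentially the same route as the paper: localize to $L = K \cap \downarrow\!\! x$, apply Theorem~\ref{thm:km} to this nonempty compact convex set, and use the observation that extreme points of $L$ are extreme points of $K$ (the paper records this as the identity $\ex(K \cap \downarrow\!\! x) = \ex(K) \cap \downarrow\!\! x$, of which you prove the one inclusion actually needed). Your explicit verification that $F \subset \downarrow\!\! e$ whenever $e = \bigoplus F$, and your remarks on closures, are exactly the right points to check and are handled correctly.
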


\begin{proof}
Let $K$ be a nonempty principally compact subset of $S$, and let $x \in K$. If one notices that $\ex(K \cap \downarrow\!\! x) =  \ex(K) \cap \downarrow\!\! x$, then the first assertion of the corollary is obvious. Now suppose also that $K$ is convex, and let $L = K \cap \downarrow\!\! x$, which is nonempty compact convex. Then, by the Krein--Milman theorem, $x \in L = \overline{\co}(\ex L) = \overline{\co}(\ex(K) \cap \downarrow\!\! x) \subset \overline{\co}(\ex K)$, so that $K = \overline{\co}(\ex K)$. 
\end{proof}

\subsection{Bauer's principle}\label{sec:bauer}

Let $S$ be a topological semilattice and $K$ be a convex subset of $S$, and let $L$ be a chain (considered as a semilattice). 
A map $f : K \rightarrow L$ such that $f(x \oplus y) \leqslant f(x) \oplus f(y)$ (resp.\ $f(x \oplus y) \geqslant f(x) \oplus f(y)$), for all $x, y \in K$, is called \textit{convex} (resp.\ \textit{concave}). An \textit{affine} map is a convex and concave map, i.e.\ a semilattice-morphism. It is easily checked  that $f$ is concave if and only if it is order-preserving. Also, $f$ is convex (resp.\ concave) if and only if its \textit{epigraph} $\{ (x, t) \in K \times L : f(x) \leqslant t \}$ (resp.\ its \textit{hypograph} $\{ (x, t) \in K \times L : f(x) \geqslant t \}$) is convex in $K \times L$. 

We also say that a map $f : K \rightarrow L$ is \textit{lower-semicontinuous} or \textit{lsc} (resp.\ \textit{upper-semicontinuous} or \textit{usc}) if $\{ f > t \}$ (resp.\ $\{ f < t \}$) is open in $K$ for all $t \in L$. 

Let $K$ be a nonempty subset of $S$. A subset $E$ of $K$ is \textit{extreme} in $K$ if, for all $x, y \in K$, $x \oplus y \in E \Rightarrow (x \in E \mbox{ or } y \in E)$, and $E$ is a \textit{face} of $K$ if $E$ is a nonempty compact subset of $K$ that is extreme in $K$. The next result is a semilattice-version of the classical Bauer maximum principle \cite{Bauer60}. 

\begin{proposition}[Bauer's maximum principle]
Let $S$ be a topological semilattice, $K$ be a nonempty compact convex subset of $S$, and $L$ be a chain. Let $f : K  \rightarrow L$ be a convex, usc map. Then $\argmax f$ is a face of $K$, and $f$ attains its maximum on $\ex K$. 
\end{proposition}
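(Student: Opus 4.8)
The plan is to mimic the classical proof of the Bauer maximum principle, replacing linearity with the semilattice operation and using concavity ($=$ order-preservation) of the relevant maps together with Wallace's lemma for the existence of extreme points. First I would verify that $\argmax f := \{x \in K : f(x) = \max_{y \in K} f(y)\}$ is well-defined and nonempty: since $L$ is a chain and $f$ is usc on the compact set $K$, the image $f(K)$ attains its supremum (one can argue with nets, or note that $\{f \geqslant t\}$ is a decreasing family of nonempty closed, hence compact, subsets with the finite intersection property, so their intersection is nonempty). Call this maximal value $m$, so $\argmax f = \{f \geqslant m\} = \bigcap_{t < m} \{f > t\}^c$-type description; more directly $\argmax f = \{f \geqslant m\}$, which is closed because $f$ is usc (its complement $\{f < m\}$ is open), hence compact in $K$.

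Next I would check that $\argmax f$ is extreme in $K$. Suppose $x, y \in K$ with $x \oplus y \in \argmax f$. Then $m = f(x \oplus y) \leqslant f(x) \oplus f(y) = \max\{f(x), f(y)\}$ by convexity of $f$ and the fact that $L$ is a chain (so the join in $L$ is the larger of the two values). Since $m$ is the maximum of $f$ on $K$, both $f(x) \leqslant m$ and $f(y) \leqslant m$, so $\max\{f(x), f(y)\} = m$ forces $f(x) = m$ or $f(y) = m$, i.e.\ $x \in \argmax f$ or $y \in \argmax f$. Combined with the previous paragraph, $\argmax f$ is a nonempty compact subset of $K$ that is extreme in $K$, hence a face.

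Finally, to see that $f$ attains its maximum on $\ex K$, I would apply Wallace's lemma (or rather the refined version used in the proof of Theorem~\ref{thm:km}) inside the face $E := \argmax f$. The key point is that $K$, being a compact convex subset of a topological semilattice, is itself a compact topological semilattice, so its topology is lower semiclosed (each $\downarrow\!\! x$ is closed), and $E$ is a nonempty compact subset of it; by Wallace's lemma $E$ has a minimal element $e$, which, since every minimal point of a subset is extreme in it, is an extreme point of $E$. It remains to observe that an extreme point of an extreme subset is an extreme point of the whole: if $e = \bigoplus F$ for a finite $F \subset K$, then by induction on $|F|$ using the extremality of $E$ one gets some element of $F$ lying in $E$, and then coirreducibility of $e$ within $E$ forces $e \in F$; hence $e \in \ex K$, and $f(e) = m = \max_K f$.

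I expect the only mild subtlety to be the last step — transferring extremality of a point from the face $E$ to all of $K$ — since it requires the standard ``face of a face is a face'' / ``$\ex E \subset \ex K$ when $E$ is extreme in $K$'' argument, carried out with the finite-join characterization of extreme points rather than with segments; this is routine by induction on the size of the finite subset but is the one place where the idempotent, non-cancellative nature of $\oplus$ has to be handled with a little care. Everything else is a direct translation of the classical argument, with ``$\leqslant$-chain structure of $L$'' doing the work that the order structure of $\reels$ does classically.
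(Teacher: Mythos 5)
Your proof follows the paper's own argument almost verbatim up to the last step: existence of the maximum by compactness and upper semicontinuity, closedness (hence compactness) of $\argmax f$, extremality of $\argmax f$ from convexity of $f$ together with the fact that $\oplus$ in the chain $L$ is just $\max$, and Wallace's lemma applied to the face $E=\argmax f$ to produce a minimal element $e$. All of that is correct and is exactly what the paper does.

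The final step, however, is justified by a principle that is false in this setting. You pass from ``$e$ is a minimal, hence extreme, point of $E$'' to ``$e\in\ex K$'' via the claim that an extreme point of an extreme subset is an extreme point of the whole; the remark that the paper places immediately after this proposition explicitly warns that this transitivity fails for semilattices. Concretely, take $K=\{a,b,e\}$ with $a,b$ incomparable and $e=a\oplus b$: then $E=\{a,e\}$ is extreme in $K$ and $e$ is coirreducible in $E$, yet $e\notin\ex K$. Your inductive justification breaks precisely where you say ``coirreducibility of $e$ within $E$ forces $e\in F$'': having found some $x\in F\cap E$, coirreducibility of $e$ in $E$ gives nothing, because $F\not\subset E$ in general (in the example $x=a$, but $e\neq a$). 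What actually closes the argument is the \emph{minimality} of $e$ in $E$, which you already secured from Wallace's lemma: if $e=\bigoplus F$ with $|F|\geqslant 2$, write $e=x\oplus y$ with $x\in F$ and $y=\bigoplus(F\setminus\{x\})\in K$; extremality of $E$ puts $x$ or $y$ in $E$, both are $\leqslant e$, and minimality forces that element to equal $e$, whence $e\in F$ by induction. So the proof is repaired in one line, but as written the last step rests on a false lemma rather than on the minimality you took care to obtain.
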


\begin{proof}
By compactness of $K$, we classically know that $f$ attains its maximum on $K$. Now let $a = \max_{x \in K} f(x)$, and let $\argmax f$ be the nonempty set $\{ x \in K : f(x) = a\}$.  
The fact that $\argmax f = \{ x \in K : f(x) \geqslant a \}$ and the upper-semicontinuity of $f$ tell us that $\argmax f$ is closed, hence (nonempty) compact. Also, by convexity of $f$ and the fact that $L$ is a chain, $\argmax f$ is extreme in $K$, thus a face of $K$. Hence, every minimal element of $\argmax f$ (which exists by Wallace's lemma) belongs to $\ex K$. 
\end{proof}

\begin{remark}
Lassak \cite{Lassak86} gave, in an abstract convexity setting, a set-theoretic notion of \textit{extreme subset} as follows. For a convexity space $X$ and a subset $K$, he called $E \subset K$ an extreme subset of $K$ if 
\begin{equation}\label{eq:lassak}
E \cap \co(F) \subset \co(E \cap F), 
\end{equation}
for all finite subsets $F$ of $K$. 
However, with this definition, we do not cover the notion of extreme subset introduced above for semilattices. 
Even if Lassak's approach is appropriate for generalizing convexity of vector spaces, it does not fit with the setting of ordered structures that we want to study. 
The following modification in the definition actually untangles this problem, i.e.\ is adequate for both classical and ``ordered'' applications: one should replace (\ref{eq:lassak}) by 
\begin{equation*}
E \cap \co(F) \neq \emptyset \Rightarrow E \cap F \neq \emptyset, 
\end{equation*}
for all finite subsets $F$ of $K$. The transitivity of the relation ``is extreme in'' is then lost, but this is indeed what happens in ordered structures. In particular, in the previous proof, an extreme point of $\argmax f$ would not necessarily give an extreme point of $K$. 
\end{remark}

For completeness, we also give a dual version of Bauer's principle. Here the hypothesis can be weakened. A map $f : K \rightarrow L$ is called \textit{quasiconcave} if $\{ x \in K : f(x) > a\}$ is convex for every $a \in L$. Notice that there is no need to introduce the dual notion of \textit{quasiconvex} map, for it simply coincides with that of convex map. However,  
a quasiconcave map may be non-concave (consider for instance $f$ defined on $\mathbb{B}$ by $f(0) = 1$, $f(1)=0$).

\begin{proposition}\label{thm:maxbauer}
Let $S$ be a topological semilattice, $K$ be a nonempty closed convex subset of $S$, and $L$ be a chain with a greatest element $\top$. Let $f : K  \rightarrow L$ be a quasiconcave map, $f \not\equiv \top$. We also suppose that $f$ is lower-compact, in the sense that the subset   
\[
\{ x \in K : f(x) \leqslant a \}
\] 
is compact for all $a \in L \setminus \{ \top \}$.  
Then $\argmin f$ is a face of $K$, and $f$ attains its minimum on $\ex K$. 
\end{proposition}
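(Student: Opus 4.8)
The plan is to follow the proof of Bauer's maximum principle above, adding one preliminary step: since $L$ is not assumed to be a complete chain, I cannot take for granted that the infimum of $f$ over $K$ is attained, so I must first extract $m := \min_{x \in K} f(x)$ and the compactness of $\argmin f$ from the lower-compactness hypothesis.

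First I would set $C_a := \{x \in K : f(x) \leqslant a\}$ and $D := \{a \in L : C_a \neq \emptyset\}$. Then $D$ is an upper set of $L$, and since $f \not\equiv \top$ it contains some $a_0 < \top$. As $S$ is Hausdorff, each $C_a$ with $a \in D$, $a \leqslant a_0$, is compact (by lower-compactness) hence closed, and is contained in the compact set $C_{a_0}$. Because $L$ is a chain, a finite intersection $C_{a_1} \cap \dots \cap C_{a_n}$ equals $C_{a_j}$ with $a_j = \min(a_1, \dots, a_n) \in D$, hence is nonempty; so the family $\{C_a : a \in D,\ a \leqslant a_0\}$ has the finite intersection property, and by compactness of $C_{a_0}$ it has a common point $x_\ast$. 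A short verification shows that $m := f(x_\ast)$ is the least element of $D$; in particular $m < \top$, and $\argmin f = \{x \in K : f(x) \leqslant m\} = C_m$ is nonempty and compact.

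Next I would check that $\argmin f$ is extreme in $K$: if $x, y \in K$ with $x \oplus y \in \argmin f$ but neither $x$ nor $y$ in $\argmin f$, then $f(x), f(y) > m$, so $x$ and $y$ belong to $\{z \in K : f(z) > m\}$, which is convex --- that is, a subsemilattice --- by quasiconcavity of $f$; hence $x \oplus y$ belongs to it as well, contradicting $f(x \oplus y) = m$. Thus $\argmin f$ is a nonempty compact subset of $K$ that is extreme in $K$, i.e.\ a face of $K$.

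Finally, the topology of $S$ is lower semiclosed, since $\downarrow\!\! x$ is the preimage of the closed singleton $\{x\}$ under the continuous map $y \mapsto y \oplus x$. So Wallace's lemma gives a minimal element $e$ of the compact set $\argmin f$, and exactly as in the proof of Bauer's maximum principle this $e$ is extreme in $K$: if $e = \bigoplus F$ for a nonempty finite $F \subset K$, iterating the extremality of the face $\argmin f$ produces some $z \in F$ lying in $\argmin f$, and then $z \leqslant \bigoplus F = e$ forces $z = e$, so $e \in F$. Since $f(e) = m$, the map $f$ attains its minimum on $\ex K$. I expect the first step --- recovering the minimum and the compactness of $\argmin f$ from the lower-compactness hypothesis by a finite-intersection argument --- to be the main obstacle; the remaining steps parallel the primal Bauer principle closely.
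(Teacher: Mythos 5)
Your proof is correct and follows the same route as the paper: use lower-compactness to show the minimum is attained and $\argmin f$ is nonempty compact, deduce from quasiconcavity that $\argmin f$ is extreme in $K$ (hence a face), and apply Wallace's lemma to extract a minimal element of $\argmin f$, which is then an extreme point of $K$. The only difference is that you spell out the finite-intersection-property argument for attainment of the minimum and the lower-semiclosedness needed for Wallace's lemma, steps the paper treats as classical.
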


\begin{proof}
Let $x_0 \in K$ such that $f(x_0) \neq \top$. The subset $F = \{ f \leqslant f(x_0) \}$ is nonempty compact, so that $f$ attains its minimum on $F$, hence on $K$. 
Let $a := \min_{x \in K} f(x) < \top$. Then $\argmin f = \{ x \in K : f(x) \leqslant a\}$ is nonempty compact. With the quasiconcavity of $f$, $\argmin f$ is  an extreme subset of $K$. Thus, every minimal element of $\argmin f$ (which exists by Wallace's lemma) belongs to $\ex K$. 
\end{proof}

\section{Extension of the Krein--Milman theorem in semilattices}\label{sec:convsemilat2}

\subsection{Introduction}\label{subsec:intro}

It is natural to ask whether the Krein--Milman theorem also holds in locally compact closed convex subsets of some locally convex topological semilattice. As such, the answer is negative. For instance, the set $S = K = (-\infty, 0] \times (-\infty, 0]$ equipped with its usual (componentwise) semilattice structure and its usual topology is a locally convex, locally compact topological semilattice, but it has no extreme point. 

An additional hypothesis is certainly needed, and classical convex analysis helps to intuit it. Recall that, in 1957, Klee \cite[Theorem~3.4]{Klee57} notably improved the classical Krein--Milman theorem, for he showed that, in a locally convex Hausdorff topological vector space, every locally compact closed convex subset \textit{containing no line} is the closed convex hull of its extreme points and rays. 
In semilattices, the concept of extreme ray reduces to that of extreme point, but how could we define a suitable notion of line? 
Before coming to our proposal, we introduce \textit{locally convact semilattices}, where a \textit{convact} subset is a compact convex subset.

\subsection{Separation in locally convact semilattices}

A topological semilattice in which every element has a basis of convact neighbourhoods is called a \textit{locally convact} topological semilattice. 
This is equivalent to requiring the semilattice to be both locally convex and locally compact, since the topological closure of a convex subset remains convex. 

\begin{example}
If $X$ is a locally compact Hausdorff topological space, the \textit{upper space} $(U[X], \subset)$ of $X$ is the semilattice of all nonempty compact subsets of $X$ topologized with the Lawson topology. The term \textit{upper space} was coined by Edalat \cite{Edalat95}. 
Recall that $U[X]$ is a continuous semilattice \cite[Proposition~3.3]{Edalat95}, hence a strictly locally convex topological semilattice \cite[Exercise~III-2.17]{Gierz03}. It is also known that $U[X]$ is locally compact (see Liukkonen and Mislove \cite[Paragraph~I]{Liukkonen83}). 
\end{example}

\begin{problem}
By \cite[Proposition~7.1]{Lawson74} a locally compact Hausdorff semi\-topological group is topological. Is a locally convact Hausdorff semi\-to\-po\-lo\-gical semilattice with closed order necessarily a topological semilattice?
\end{problem}

The next lemma is implicit in the paper by Liukkonen and Mislove \cite{Liukkonen83}, but it deserves a specific statement. 

\begin{lemma}\label{lem:compact}
In a locally convact topological semilattice, every non\-empty relatively compact subset has a supremum, and every nonempty compact convex subset has a greatest element. 
\end{lemma}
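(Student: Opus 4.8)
The plan is to prove the two assertions in tandem, reducing everything to the existence of suprema of relatively compact sets and then deriving the statement about greatest elements of convact subsets as a special case. First I would recall the two facts that are already available to me: Wallace's lemma (a nonempty compact subset of a poset with a lower semiclosed topology has a minimal element), and — dually — the fact that in a topological semilattice with closed order, any nonempty compact subset has an upper bound in the closure, since joins of finite subsets move us upward. The crucial structural input is local convactness, which guarantees small semilattice neighbourhoods that are simultaneously compact and convex.

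The main step is the following. Let $A$ be a nonempty relatively compact subset of the locally convact topological semilattice $S$, and set $\overline{A}$ for its (compact) closure. Consider the subsemilattice $T = \co(A)$ generated by $A$; by closure stability its topological closure $\overline{T}$ is again a subsemilattice. I claim $\overline{T}$ is compact: cover $\overline{A}$ by finitely many convact neighbourhoods $V_1, \dots, V_n$, let $V = \co(V_1 \cup \dots \cup V_n)$, which is compact because it is the continuous image under the (iterated) join map of a finite union of products of the $V_i$'s — more precisely $V = \bigcup_{k=1}^{n} \{x_1 \oplus \cdots \oplus x_k : x_i \in V_1 \cup \cdots \cup V_n\}$, a finite union of continuous images of compact sets, hence compact; and $V$ is convex, hence $\overline{T} \subset \overline{V} = V$ is closed in a compact set, thus compact. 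Now $\overline{T}$ is a nonempty compact subset of $S$; applying the dual of Wallace's lemma (valid since $S$ is Hausdorff, so the order is lower semiclosed, and we use that upward-directed joins are available in the compact semilattice $\overline{T}$, which by the fundamental theorem~\ref{thm:fond} is a complete continuous semilattice once we also note it is locally convex as a subsemilattice of $S$), $\overline{T}$ has a greatest element $m$. Since $A \subset \overline{T}$ and $m \geqslant t$ for all $t \in \overline{T}$, the element $m$ is an upper bound of $A$; and because $m \in \overline{T} = \overline{\co(A)}$, any upper bound of $A$ is an upper bound of $\co(A)$ and hence, by continuity of $\oplus$ and closedness of principal filters, an upper bound of $\overline{T}$, so $m$ is the least such, i.e.\ $m = \sup A$.

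For the second assertion, suppose $K$ is a nonempty compact convex subset. Then $K = \overline{K}$ is relatively compact, so by the first part $\sup K$ exists in $S$. It remains to see $\sup K \in K$: but $K$ is a closed subsemilattice of the compact semilattice furnished by covering $K$ with convact neighbourhoods as above, hence $K$ is itself a complete continuous semilattice by Theorem~\ref{thm:fond}, and by \cite[Proposition~VI-2.9]{Gierz03} (closedness under existing suprema of nonempty subsets, already used in the proof of Theorem~\ref{thm:km}) the supremum of $K$ taken in the ambient semilattice lies in $K$. Thus $\sup K$ is the greatest element of $K$.

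I expect the main obstacle to be the compactness of $\overline{\co(A)}$: one must be careful that $\co(A)$ itself need not be compact (it is the union of finite joins from $A$, which can fail to be closed), so the argument has to pass through a compact convex neighbourhood $V$ of $\overline{A}$ and use that $\co$ of a \emph{compact} set sitting inside small semilattices stays compact. Getting this neighbourhood argument right — in particular verifying that the finitely-iterated join map has compact image and that $V$ is genuinely convex — is where the local convactness hypothesis does the real work, and it is the step most easily glossed over. The remainder is then a routine combination of Wallace's lemma, closure stability, and the fundamental theorem of compact semilattices.
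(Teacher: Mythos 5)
Your argument is correct and shares the paper's overall skeleton---embed $A$ in a compact convex subsemilattice, use the fundamental theorem of compact semilattices to obtain a supremum there, then check that this local supremum is the supremum in the ambient semilattice---but it differs in two worthwhile respects. First, where the paper simply cites Lawson's covering lemma \cite[Lemma~5.2]{Lawson69} to produce a compact convex set containing $\overline{A}$, you reprove it: cover $\overline{A}$ by finitely many convact neighbourhoods $V_1,\dots,V_n$ and note that $\co(V_1\cup\dots\cup V_n)$ consists of at most $n$-fold joins (the bound $n$ coming precisely from the convexity of each $V_i$), hence is a finite union of continuous images of compact sets. Second, and more substantively, your verification that the local supremum $m$ is the global one uses that for any upper bound $x$ of $A$ the principal ideal $\downarrow\!\! x$ is a closed subsemilattice, hence contains $\overline{\co}(A)\ni m$; the paper instead projects $x$ onto the compact convex set $C$ via the greatest element of the nonempty compact convex set $C\cap\downarrow\!\! x$, which makes the first assertion of the lemma depend on the second. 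Your route inverts this dependency, deriving the greatest-element statement as a consequence of the supremum statement, and is arguably cleaner. Two small blemishes: the appeal to a ``dual of Wallace's lemma'' for the existence of a greatest element of $\overline{\co}(A)$ is a red herring, since Wallace only yields maximal elements, but your parenthetical justification via completeness of a compact locally convex semilattice is the correct one; and ``closedness of principal filters'' should read principal ideals.
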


\begin{proof}
For the second assertion see e.g.\ \cite[Proposition~VI-1.13(v)]{Gierz03} (it suffices for the ambient semilattice to be Hausdorff semitopological). 
Let $K$ be a locally convact topological semilattice, and $A$ be a nonempty relatively compact subset.  
Then $\overline{A}$ is compact, so by \cite[Lemma~5.2]{Lawson69} we can find a compact convex subset $C$ of $K$ containing $\overline{A}$. Then we know by the fundamental theorem that $C$ is a complete semilattice, so $A$ has a supremum $a_0 = \bigoplus_C A$ in $C$. We show that $a_0$ is also the supremum of $A$ in $K$. So let $x \in K$ be an upper bound of $A$ in $K$. Since $x \in \uparrow\!\! C$, the set $C \cap \downarrow\!\! x$ is nonempty compact convex, so it has a greatest element $c$. Then $a_0 \leqslant c \leqslant x$. This proves that $A$ has a supremum in $K$. 
\end{proof}

\begin{remark}
The previous proof actually uses the concept of projection. To see this, let $S$ be a locally convex topological semilattice, and $K$ be a nonempty compact convex subset of $S$. Then, for every $x \in \uparrow\!\! K$, the set $K \cap \downarrow\!\! x = \{ k \in K : k \leqslant x \}$ is nonempty compact convex so has a greatest element, so we can define the \textit{projection of $x$ on $K$} by 
\[
p_K(x) := \bigoplus_K \{ k \in K : k \leqslant x \}. 
\]
The partial map $p_K$ deserves to be called a projection for it satisfies $p_K \circ p_K = p_K$ and $p_K(x) \leqslant x$ for all $x \in \uparrow\!\! K$. 
Moreover, if $x \not\in K$, the set 
\[
H = \{ y \in S : y \leqslant x \Rightarrow y \leqslant p_K(x) \}
\]
is a halfspace (i.e.\ a convex subset with a convex complement) separating $K$ and $x$. Compare with Cohen et al.\ \cite[Theorem~8]{Cohen04}, where a similar statement is given for complete idempotent modules.  
\end{remark}

Now we can legitimately recall the results of Liukkonen and Mislove \cite[Proposition~1.1]{Liukkonen83}. 

\begin{proposition}\cite[Proposition~1.1]{Liukkonen83}\label{prop:liu}
Let $K$ be a locally compact topological semilattice. Then $K$ is locally convex if and only if the map $U[K] \ni A \mapsto \bigoplus_K A \in K$ is a continuous morphism. In this case, if $A \subset K$ is compact, then $A$ has a compact convex neighbourhood in $K$, and there is a minimal subset $B \subset A$ such that $\bigoplus_K A = \bigoplus_K B$. 
\end{proposition}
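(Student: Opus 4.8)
The plan is to prove the two implications separately, and then to read off the two supplementary assertions from the ``only if'' direction.

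\emph{The ``if'' direction.} Suppose $\phi\colon U[K]\to K$, $\phi(A)=\bigoplus_K A$, is a well-defined continuous morphism; I must show $K$ is locally convex. The crucial observation is that $U[K]$ is \emph{itself} a strictly locally convex topological semilattice: since $K$ is locally compact Hausdorff, $U[K]$ is a continuous semilattice by \cite[Proposition~3.3]{Edalat95}, hence is strictly locally convex by \cite[Exercise~III-2.17]{Gierz03}. Moreover the singleton map $s\colon K\to U[K]$, $y\mapsto\{y\}$, is continuous and $\phi\circ s=\mathrm{id}_K$. Now fix $x\in K$ and an open neighbourhood $W$ of $x$. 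Then $\phi^{-1}(W)$ is an open neighbourhood of $\{x\}$ in $U[K]$, so it contains a convex open neighbourhood $\mathcal N$ of $\{x\}$. Being the image of a subsemilattice under a morphism, $\phi(\mathcal N)$ is a subsemilattice of $K$; it lies inside $W$, contains $x=\phi(\{x\})$, and contains the open set $s^{-1}(\mathcal N)\ni x$ since $\phi(s(y))=y$. Hence $\phi(\mathcal N)$ is a convex neighbourhood of $x$ contained in $W$, so $K$ is locally convex.

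\emph{The ``only if'' direction.} Assume $K$ is locally compact and locally convex, hence locally convact. Well-definedness of $\phi$ is Lemma~\ref{lem:compact}, and $\phi$ is a morphism because $\bigoplus_K(A\cup B)=(\bigoplus_K A)\oplus(\bigoplus_K B)$ by the universal property of suprema. The substance is continuity, which I would establish at each $A_0\in U[K]$ as follows. Write $a_0=\bigoplus_K A_0$. Using local compactness, choose a compact neighbourhood of $A_0$ and apply \cite[Lemma~5.2]{Lawson69} to obtain a compact convex $C\subset K$ with $A_0\subset\mathrm{int}\,C$; by the fundamental theorem~\ref{thm:fond}, $C$ is a complete continuous semilattice carrying the Lawson topology. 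Since $\bigoplus_K B=\bigoplus_C B$ for every compact $B\subset C$ (argue as in the proof of Lemma~\ref{lem:compact}), it suffices to prove that $B\mapsto\bigoplus_C B$ is continuous on the compact locally convex semilattice $C$. For this, given a compact $B_0\subset C$ with join $b_0$, one checks that every relatively compact open $O\supset B_0$ satisfies $b_0\leqslant\bigoplus_C\overline O$ and, using the small-semilattices property of $C$, that $\bigoplus_C\overline O$ decreases to $b_0$ as $O$ ranges over the neighbourhoods of $B_0$; together with the monotonicity $B\subset O\Rightarrow\bigoplus_C B\leqslant\bigoplus_C\overline O$, the convergence of filtered nets to their infima in the Lawson topology, and the lower part of the Lawson topology of $U[K]$ (which forces a net $B_i\to B_0$ to ``fill out'' $B_0$ from below), this pins $\lim\bigoplus_C B_i$ to $b_0$. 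I expect this continuity of the extended join --- the one place where the interplay between local convexity and the Lawson topology is genuinely used --- to be the main obstacle; it may also be extractable from the continuous-semilattice literature.

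\emph{The supplementary assertions.} If $A\subset K$ is compact, then by local compactness it has a compact neighbourhood $M$, and \cite[Lemma~5.2]{Lawson69} provides a compact convex $C\supset M$, which is thus a compact convex neighbourhood of $A$. For the minimal $B$, order the compact subsets $B$ of $A$ with $\bigoplus_K B=\bigoplus_K A$ by reverse inclusion and invoke Zorn's lemma: along a chain $(B_\alpha)$ the set $\bigcap_\alpha B_\alpha$ is nonempty, compact, and contained in $A$, and $B_\alpha\to\bigcap_\alpha B_\alpha$ in $U[K]$ (a decreasing net of nonempty compacts converges to its intersection in the upper space), so by the continuity of $\phi$ just proved, $\bigoplus_K\bigl(\bigcap_\alpha B_\alpha\bigr)=\lim_\alpha\bigoplus_K B_\alpha=\bigoplus_K A$; hence $\bigcap_\alpha B_\alpha$ lies again in the family, and a minimal element exists. (Alternatively, Wallace's lemma applied to the set $\phi^{-1}(\{a_0\})\cap{\downarrow\!\! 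A}$ in $U[K]$ yields the same conclusion.)
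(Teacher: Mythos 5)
A point of reference first: the paper does not prove this proposition at all --- it is imported verbatim from Liukkonen and Mislove \cite[Proposition~1.1]{Liukkonen83} --- so there is no internal proof to compare yours against. Judged on its own terms, your ``if'' direction is sound and rather pleasant: it reuses the paper's own observation that $U[K]$ is a continuous, hence strictly locally convex, topological semilattice, together with the continuity of the singleton embedding $y \mapsto \{y\}$ (which does hold, since for locally compact Hausdorff $K$ the Lawson topology on $U[K]$ is the Vietoris topology), and $\phi(\mathcal N)$ is indeed a convex neighbourhood of $x$ because it contains the open set $s^{-1}(\mathcal N)$. The two supplementary assertions are also correctly reduced to Lawson's Lemma~5.2 plus a Zorn or Wallace argument inside the compact subspace $U[A]$ of $U[K]$; both variants work once continuity of $\phi$ is available.

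The genuine gap is exactly where you flag it yourself: the continuity of $B \mapsto \bigoplus_C B$ on the compact locally convex semilattice $C$, which is the entire substance of the ``only if'' direction. Your sketch supplies the routine half (the upper estimate $\bigoplus_C B_i \leqslant \bigoplus_C \overline O$ once $B_i \subset O$, and the lower estimate coming from the lower part of the Vietoris topology together with closedness of the order), but the key identity $\inf_O \bigoplus_C \overline O = \bigoplus_C B_0$, as $O$ ranges over the relatively compact open neighbourhoods of $B_0$, is merely asserted ``using the small-semilattices property.'' That identity is precisely the point where local convexity is used and where the actual work lies: one must cover $B_0$ by finitely many small convex open sets $V_1,\dots,V_n$ and control each $\bigoplus_C \overline{V_i}$ against $\bigoplus_C (B_0 \cap V_i)$, or equivalently invoke the known continuity of the supremum map on the hyperspace of a compact Lawson semilattice (part of the standard theory surrounding Theorem~\ref{thm:fond}, and the content of Liukkonen and Mislove's own proof). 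As written, the hardest step of the proposition is deferred rather than proved; you should either carry out that estimate or cite it, as the paper does.
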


An additional ingredient will be needed for our advanced Krein--Milman type theorem, namely a result for separating the points in locally convact semilattices. This role is played by the following result. 

\begin{proposition}[Compare with \protect{\cite[Theorem~4.1]{Lawson69}}]\label{prop:sep2}
In a locally con\-vact topological semilattice $K$, 
let $A$ be a nonempty closed upper subset and $x \notin A$. Then there exists a continuous semilattice-morphism $\varphi : K \rightarrow [0,1]$ that commutes with arbitrary existing suprema and such that $\varphi(A)=\{1\}$ and $\varphi(x) = 0$. 
\end{proposition}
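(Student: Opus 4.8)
The plan is to reduce the statement, via the fundamental theorem~\ref{thm:fond}, to Lawson's result that a compact semilattice with small semilattices embeds into a power of $[0,1]$ by continuous morphisms (\cite[Theorem~4.1]{Lawson69}), and then to transport the conclusion from a convact neighbourhood to the whole of $K$ using local convactness.

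\textbf{Reduction to $x = \min K$.} I would first replace $K$ by the principal filter $\uparrow\!\! x$ with the induced topology, and $A$ by $A \cap \uparrow\!\! x$. Then $\uparrow\!\! x$ is a closed subsemilattice of $K$, hence again a locally convact topological semilattice; $A \cap \uparrow\!\! x$ is a nonempty closed upper subset of $\uparrow\!\! x$ with $x \notin A \cap \uparrow\!\! x$ (it contains $x \oplus a$ for any $a \in A$, since $A$ is an upper set); and if $\psi : \uparrow\!\! x \rightarrow [0,1]$ is a continuous morphism commuting with arbitrary existing suprema, with $\psi(x) = 0$ and $\psi = 1$ on $A \cap \uparrow\!\! x$, then $\varphi(k) := \psi(k \oplus x)$ is a continuous morphism $K \rightarrow [0,1]$ with the required properties: $\varphi(k \oplus k') = \psi\big((k \oplus x) \oplus (k' \oplus x)\big) = \varphi(k) \oplus \varphi(k')$ because $x \oplus x = x$; $\varphi$ commutes with arbitrary existing suprema because $x \oplus \bigoplus_K D = \bigoplus_K \{x \oplus d : d \in D\}$ whenever $\bigoplus_K D$ exists; and $\varphi(x) = \psi(x) = 0$, $\varphi(a) = \psi(x \oplus a) = 1$ for $a \in A$. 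So I may assume $x = \bot := \min K$. This normalisation is used again below to get the value \emph{exactly} $1$ on $A$: a morphism vanishing at $\bot$ is everywhere $\geqslant \psi(\bot)$ on an upper set, so separating $\bot$ from $A$ will force it to be $\geqslant \delta > 0$ on all of $A$.

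\textbf{The compact case.} If $K$ is compact, Theorem~\ref{thm:fond} makes it a complete continuous semilattice carrying the Lawson topology, and $A$ is then a Lawson-closed upper set, hence a closed subsemilattice, with $\bot \notin A$. By \cite[Theorem~4.1]{Lawson69}, $K$ embeds into a power $[0,1]^J$ through continuous morphisms $(\psi_j)_{j \in J}$ preserving all suprema and vanishing at $\bot$ (here one also uses the order-generation of coirreducibles, \cite[Corollary~I-3.10]{Gierz03}). Since the image of $A$ is compact and avoids the image of $\bot$, a basic neighbourhood of the latter misses it, yielding finitely many indices $j_1, \dots, j_n$ and some $\delta \in (0,1)$ with $\psi_{j_1} \oplus \dots \oplus \psi_{j_n} \geqslant \delta$ on $A$. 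Composing with the continuous sup-preserving morphism $h : [0,1] \rightarrow [0,1]$, $h(t) = \min(1, t/\delta)$, the map $\varphi := h \circ (\psi_{j_1} \oplus \dots \oplus \psi_{j_n})$ is a continuous morphism commuting with arbitrary suprema, with $\varphi(\bot) = 0$ and $\varphi \equiv 1$ on $A$.

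\textbf{From compact to locally compact.} In general, fix a convact neighbourhood $C$ of $\bot$; by Theorem~\ref{thm:fond}, $C$ is a complete continuous semilattice, $\uparrow\!\! C = K$, and Lemma~\ref{lem:compact} provides a retraction $p_C : K \rightarrow C$, $p_C(k) = \bigoplus_C(C \cap \downarrow\!\! k)$. The plan is then a Urysohn-type argument: using local convexity (small semilattices) to interpolate between a closed filter and an open set containing it, and using Lemma~\ref{lem:compact} (relatively compact subsets have suprema) to keep the levels closed under arbitrary existing suprema, build a decreasing scale $(F_t)_{0 \leqslant t < 1}$ of open filters of $K$, each containing $A$, avoiding $\bot$, closed under existing suprema, and with $\overline{F_t} \subseteq F_s$ for $s < t$; then $\varphi(k) := \sup\{t : k \in F_t\}$ answers the question. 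I expect the main obstacle to be exactly this step: turning the separation available inside the convact piece $C$ (the compact case above) into \emph{global} open filters of $K$ separating $A$ from $\bot$, with the required closure and nesting. This is where the retraction $p_C$, the way-above relation of $C$, and the halfspace construction of the Remark following Lemma~\ref{lem:compact} are meant to be brought to bear.
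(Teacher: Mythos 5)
Your reduction to $x=\bot$ and your treatment of the compact case are sound (in the compact case the sup-preservation is even automatic: in a compact topological semilattice a directed set converges to its supremum, so any continuous morphism preserves arbitrary suprema). But the proof as written has a genuine gap, and it is exactly the one you flag yourself: the passage from the compact piece $C$ to the whole locally compact $K$ is only announced as a ``plan'' --- building a nested scale $(F_t)$ of \emph{global} open filters of $K$, each containing $A$, avoiding $\bot$, closed under existing suprema, with $\overline{F_t}\subseteq F_s$ for $s<t$ --- and none of the interpolation steps is actually carried out. This is not a routine transfer: outside the compact setting a net need not converge to its supremum, so closure of the levels under arbitrary existing suprema does not come for free from topological closedness, and the retraction $p_C$ only sees the part of $K$ below points of $C$. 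As it stands the hardest part of the statement is assumed rather than proved.

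The paper's route sidesteps this entirely. It does not rebuild the separation from the compact case; it invokes Lawson's Urysohn-type construction (\cite[Theorem~4.1]{Lawson69}) directly, which already yields a continuous morphism $\varphi$ with $\varphi(x)=0$ and $\varphi(A)=\{1\}$ \emph{of the explicit form} $\varphi(z)=\bigoplus\{t: z\in V_t\}$, where $t$ ranges over dyadics and each $V_t=K\setminus\downarrow\! z_t$ is the complement of a principal ideal. The whole content of the paper's proof is then the one-line observation that, because $\bigoplus F\in V_t$ iff some $f\in F$ lies in $V_t$, such a $\varphi$ automatically commutes with arbitrary existing suprema. If you want to complete your argument along your own lines, the missing Urysohn scale is precisely Lawson's, and the key structural fact you need to extract from it is that its levels can be taken to be complements of principal ideals --- that is what makes the sup-preservation work without any compactness.
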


\begin{proof}
With a proof similar to that of Urysohn's lemma and utilizing the axiom of choice, Lawson \cite[Theorem~4.1]{Lawson69} built a continuous semilattice-morphism $\varphi : K \rightarrow [0,1]$ such that $\varphi(x) = 0$,  $\varphi(A) = \{1\}$, and of the form $\varphi(z) = \bigoplus \{ t : z \in V_t \}$, where $t$ runs over the set of dyadic numbers in $[0, 1]$, and $V_t = K \setminus \downarrow\!\! z_t$ for some $z_t \in K$. We show that $\varphi$ preserves arbitrary existing suprema. If $F$ is a nonempty subset of $K$ with supremum, then $\bigoplus F \notin V_t \Leftrightarrow \bigoplus F \leqslant z_t \Leftrightarrow (\forall f \in F) (f \leqslant z_t) \Leftrightarrow (\forall f \in F) (f \notin V_t)$. We deduce that $\{ t : \bigoplus F \in V_t \} = \bigcup_{f \in F} \{ t : f \in V_t\}$, so that $\varphi(\bigoplus F) = \bigoplus_{f \in F} \bigoplus \{ t : f \in V_t \} = \bigoplus_{f \in F} \varphi(f)$, i.e.\ $\varphi$ preserves existing suprema. 
\end{proof}

Note that, under the same hypothesis, if $x, y \in K$ such that $x \not\leqslant y$, this proposition provides a continuous semilattice-morphism $\varphi : K \rightarrow [0,1]$ that commutes with arbitrary nonempty suprema and such that $\varphi(x) = 1$ and $\varphi(y) = 0$. In particular, the $\varphi$'s separate the points of $K$. 
 
We state two additional results on separation in semilattices. They will not be used later on, but we believe they are of independent interest.  
 
\begin{proposition}\label{prop:sep3}
In a locally convact topological semilattice $K$, 
let $A$ be a compact convex subset and $x \notin A$. Then there exists an open convex neighbourhood $V$ of $A$ such that $x \notin \overline{V}$.  
\end{proposition}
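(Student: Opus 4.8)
The plan is to separate the compact convex set $A$ from the point $x \notin A$ by first reducing to the "upper" situation handled by Proposition~\ref{prop:sep2}, then pulling back a convex neighbourhood along the morphism it produces. First I would argue that $x \notin {\uparrow}A$, or deal with the two cases $x \in {\uparrow}A$ and $x \notin {\uparrow}A$ separately. If $x \notin {\uparrow}A$: since $K$ is locally convact, $A$ is compact, and $K$ is a topological semilattice, the upper set ${\uparrow}A$ is closed (this is standard for Lawson-topology semilattices, and $A$ compact convex; alternatively one invokes \cite[Proposition~VI-1.13]{Gierz03} together with local compactness), so ${\uparrow}A$ is a nonempty closed upper subset not containing $x$. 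Proposition~\ref{prop:sep2} then gives a continuous semilattice-morphism $\varphi : K \to [0,1]$ with $\varphi({\uparrow}A) = \{1\}$ and $\varphi(x) = 0$. Then $V := \varphi^{-1}\big((\tfrac12, 1]\big)$ is open, contains $A$, and is convex because $\varphi$ is a morphism and $(\tfrac12,1]$ is a subsemilattice of $[0,1]$; moreover $\overline{V} \subset \varphi^{-1}\big([\tfrac12,1]\big)$ by continuity, which excludes $x$. So in this case we are done, and in fact we need no compactness or convexity of $A$ beyond closedness of ${\uparrow}A$.

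The genuinely new content is the case $x \in {\uparrow}A \setminus A$, i.e.\ $x$ lies above (part of) $A$ but is not in it. Here I would use the projection $p_A(x) = \bigoplus_A\{ k \in A : k \leqslant x\}$, which exists by Lemma~\ref{lem:compact} since $A \cap {\downarrow}x$ is nonempty compact convex; one has $p_A(x) \in A$ and $p_A(x) \leqslant x$, and since $x \notin A$ we get $p_A(x) \neq x$, hence $x \not\leqslant p_A(x)$. Following the remark after Lemma~\ref{lem:compact}, the set $H = \{ y \in S : y \leqslant x \Rightarrow y \leqslant p_A(x)\}$ is a convex halfspace containing $A$ (every $a \in A$ with $a \leqslant x$ satisfies $a \leqslant p_A(x)$) with $x \notin H$. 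The difficulty is that $H$ is a halfspace, not an \emph{open} neighbourhood, so one still has to thicken it. I would apply the separation machinery inside a fixed compact convex $C$ containing $A$ with $x$ in its relative interior of ${\uparrow}A$: concretely, restrict attention to the compact convex set $C$ given by \cite[Lemma~5.2]{Lawson69}, where $C$ is a compact (hence complete continuous) semilattice, and run Proposition~\ref{prop:sep2} on $C$ relative to the closed upper set ${\uparrow}_C p_A(x)$ and the point $x$ (legitimate since $x \not\leqslant p_A(x)$), obtaining $\psi : C \to [0,1]$ with $\psi({\uparrow}_C p_A(x)) = \{1\}$, $\psi(x) = 0$. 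Then $\psi^{-1}((\tfrac12,1])$ is an open convex subset of $C$ containing $A$ whose closure in $C$ misses $x$; using local convactness of $K$ one extends this to an open convex neighbourhood $V$ of $A$ in $K$ (intersect a convact neighbourhood of $A$ with the preimage).

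The main obstacle I anticipate is bookkeeping the passage between $K$ and the auxiliary compact convex $C$: one must ensure that an open convex neighbourhood of $A$ \emph{relative to} $C$ can be promoted to an open convex neighbourhood in $K$ with closure still avoiding $x$. This is where local convactness is essential — it supplies a convact neighbourhood $N$ of $A$, and then $V = \mathrm{int}_K(N) \cap (\text{pullback of the } C\text{-neighbourhood})$ should work once one checks convexity of the intersection (both factors convex) and that $\overline{V} \subset N \cap \psi^{-1}([\tfrac12,1])$ still excludes $x$, which holds provided $x \notin N$, arranged by shrinking $N$. A cleaner alternative, which I would try first, is to avoid $C$ entirely: observe that $A$ being compact convex and $K$ locally convact, $A$ has a convact neighbourhood $N$ (Proposition~\ref{prop:liu}), and within the compact semilattice $N$ apply the $C$-free version of the argument; then the only remaining point is $x \notin N$, secured by choosing $N$ small, after which the halfspace/morphism separation inside $N$ produces $V$ directly.
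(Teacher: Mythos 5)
Your first case ($x \notin \uparrow\!\! A$) is exactly the paper's, and is fine. The gap is in the second case. Having set $b := p_A(x) = \bigoplus_K (A \cap \downarrow\!\! x)$, you propose to apply Proposition~\ref{prop:sep2} to the closed upper set $\uparrow\!\! b$ and the point $x$, asking for $\psi$ with $\psi(\uparrow\!\! b) = \{1\}$ and $\psi(x) = 0$. This is not a legitimate application: since $b \leqslant x$, the point $x$ lies \emph{in} $\uparrow\!\! b$, so it cannot be separated from that set (your parenthetical ``legitimate since $x \not\leqslant b$'' checks the wrong inequality; what would be needed is $b \not\leqslant x$, which is false). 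Moreover, even if such a $\psi$ existed, the set $\psi^{-1}((1/2,1]) \supset \uparrow\!\! b$ would not in general contain $A$: the element $b$ is only the top of $A \cap \downarrow\!\! x$, and $A$ typically has many points that are not above $b$. So your candidate neighbourhood fails to cover $A$, and the subsequent passage through the auxiliary compact convex set $C$ of Lawson's lemma and the ``thickening'' of the halfspace $H$ repairs neither defect.

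The paper's proof uses the two ingredients you identified but combines them differently, with \emph{two} morphisms. Since $x \not\leqslant b$, the remark after Proposition~\ref{prop:sep2} gives $\psi$ with $\psi(x) = 1$ and $\psi(b) = 0$; then $U := \{\psi < 1/2\}$ is an open convex set containing all of $B = A \cap \downarrow\!\! x$ (every element of $B$ is $\leqslant b$ and $\psi$ is order-preserving), and $x \notin \overline{U}$. The leftover piece $C := A \setminus U$ consists of points $c$ with $c \not\leqslant x$, so $x \notin \uparrow\!\! C$, which is closed since $C$ is compact; a second application of Proposition~\ref{prop:sep2} gives $\varphi$ with $\varphi(C) = \{1\}$ and $\varphi(x) = 0$. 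The final neighbourhood is $V = \{\psi < 1/2\} \cup \{\varphi > 1/2\}$, whose convexity follows because $\varphi$ preserves finite suprema, and $x \notin \overline{V} \subset \{\psi \leqslant 1/2\} \cup \{\varphi \geqslant 1/2\}$. The step your proposal is missing is precisely this treatment of the part of $A$ that is neither below $x$ nor above $b$.
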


\begin{proof}
If $x \notin \uparrow\!\! A$, then, considering that $\uparrow\!\! A$ is a closed (use e.g.\ \cite[Proposition~VI-1.6(ii)]{Gierz03}) and upper subset of $K$, we can apply Proposition~\ref{prop:sep2} and take $V = \{ \varphi > 1/2 \}$. 
Otherwise, $B := A \cap \downarrow\!\! x$ is nonempty compact convex, so its has a greatest element $b = \bigoplus_K  B \in B$.  
Since $x \notin A$, $x \neq b$. Thus $x \not\leqslant b$, so there is some $\psi : K \rightarrow [0,1]$ such that $\psi(x) = 1$ and $\psi(b) = 0$. Hence, the set $U := \{ \psi < 1/2 \}$ is open in $K$ and contains $B$. Now consider $C = A \setminus U$. For every $c \in C$, $c \not\leqslant x$, i.e.\ $x \notin \uparrow\!\! C$. But $C$ is closed in $A$, hence compact, so $\uparrow\!\! C$ is closed by \cite[Proposition~VI-1.6(ii)]{Gierz03}, and Proposition~\ref{prop:sep2} applies again: there is some $\varphi : K \rightarrow [0, 1]$ such that $\varphi(C) = \{1\}$ and $\varphi(x) = 0$. To conclude the proof, choose $V = \{ \psi < 1/2 \} \cup \{ \varphi > 1/2 \}$. This is an open convex subset containing $A$, and $x \notin \overline{V} \subset \{ \psi \leqslant 1/2 \} \cup \{ \varphi \geqslant 1/2 \}$.  
\end{proof}

\begin{remark}
In the proof the convex set $\{ \psi \leqslant 1/2 \} \cup \{ \varphi \geqslant 1/2 \}$ has a convex complement, i.e.\ is a (closed) halfspace. 
\end{remark}

\begin{corollary}[Axiom $\nsq$]
In a compact locally convex topological semilattice $K$, 
let $A, B$ be disjoint closed convex subsets. Then there exists a closed convex neighbourhood of $A$ that is disjoint from $B$. 
\end{corollary}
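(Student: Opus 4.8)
The plan is to bootstrap from the point-versus-compact-convex separation of Proposition~\ref{prop:sep3} via a compactness argument, exactly as one proves that compact Hausdorff spaces are normal. First I would observe that the hypotheses place us inside the framework of Proposition~\ref{prop:sep3}: a compact space is locally compact, and a locally convex topological semilattice has closure-stable convexity, so ``compact $+$ locally convex'' gives ``locally convact''. Moreover $A$ and $B$, being closed in the compact space $K$, are themselves compact; and if $B = \emptyset$ the statement is trivial (take $K$ itself), so we may assume $B \neq \emptyset$.

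Next, for each $b \in B$ we have $b \notin A$ by disjointness, so Proposition~\ref{prop:sep3} furnishes an open convex neighbourhood $V_b$ of $A$ with $b \notin \overline{V_b}$. The family $\{ K \setminus \overline{V_b} : b \in B \}$ is then an open cover of the compact set $B$; extracting a finite subcover associated with points $b_1, \dots, b_n \in B$, I would set $V := V_{b_1} \cap \dots \cap V_{b_n}$. A convexity is closed under finite intersections and a finite intersection of open sets is open, so $V$ is an open convex neighbourhood of $A$ (each $V_{b_i}$ contains $A$).

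It remains to check that $\overline{V}$ is the desired neighbourhood. By closure stability, $\overline{V}$ is convex; it is a closed neighbourhood of $A$ because it contains the open set $V \supseteq A$. Finally $\overline{V} \subseteq \bigcap_{i=1}^n \overline{V_{b_i}}$, while every $b \in B$ lies in some $K \setminus \overline{V_{b_i}}$, hence outside $\bigcap_{i=1}^n \overline{V_{b_i}}$ and a fortiori outside $\overline{V}$; thus $\overline{V} \cap B = \emptyset$.

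I do not expect a real obstacle: all the substance is in Proposition~\ref{prop:sep3}, and the rest is the standard ``pointwise separation plus compactness'' routine. The only points requiring a moment's attention are the verification that the ambient assumptions yield the locally convact setting needed to invoke Proposition~\ref{prop:sep3}, and the appeal to closure stability to ensure $\overline{V}$ remains convex.
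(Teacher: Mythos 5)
Your proposal is correct and follows essentially the same route as the paper: pointwise separation of each $b \in B$ from $A$ via Proposition~\ref{prop:sep3}, a finite subcover of $B$ by the complements $K \setminus \overline{V_b}$, and then a finite intersection of the resulting convex neighbourhoods. The only cosmetic difference is that you take the closure of the intersection $\overline{V_{b_1} \cap \dots \cap V_{b_n}}$ while the paper takes the intersection of the closures $\bigcap_i \overline{V}_{b_i}$; both yield a closed convex neighbourhood of $A$ disjoint from $B$.
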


\begin{proof}
Let $x \in B$. Since $A$ is disjoint from $B$, $x \notin A$, hence there exists some open convex neighbourhood $V_x$ of $A$ such that $x \notin \overline{V}_x$ by Proposition~\ref{prop:sep3}. The family of open subsets $(K \setminus \overline{V}_x)_{x \in B}$ covers the compact subset $B$, hence admits a finite subcover $(K \setminus	 \overline{V}_x)_{x \in F}$, with $F \subset B$ finite. Therefore, $K \setminus B \supset \bigcap_{x \in F} \overline{V}_x \supset \bigcap_{x \in F} V_x \supset A$. Thus, $\bigcap_{x \in F} \overline{V}_x$ is a closed convex neighbourhood of $A$ that is disjoint from $B$. 
\end{proof}

\subsection{Extension to the locally compact case}

To resolve the problem raised in the introduction (Paragraph~\ref{subsec:intro}),  
we define a \textit{line} of a topological semilattice $S$ as an upper-bounded chain in $S$ that is not relatively compact. Hence a closed subset $K$ of $S$  \textit{contains no line} if every upper-bounded chain in $K$ is contained in a compact subset of $K$. 

\begin{lemma}\label{lem:klee}
Let $S$ be a locally convex topological semilattice, and let $K$ be a locally compact closed convex subset of $S$ containing no line. Then every element $x$ of $K$ is the supremum in $K$ of the extreme points of $K$ below $x$. 
\end{lemma}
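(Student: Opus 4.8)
The plan is to work entirely inside $K$. Being a closed convex subset of $S$, $K$ is a closed subsemilattice, and with the relative topology it is a topological semilattice with small semilattices (since $S$ has them); as it is also locally compact, $K$ is itself a locally convact topological semilattice. In particular principal ideals and filters of $K$ are closed, $K$ is a pospace, and ``containing no line'' is intrinsic to $K$. Fixing $x \in K$, I would put $D := \ex K \cap \downarrow\!\!x$. Since $D \subset \downarrow\!\!x$, the point $x$ is already an upper bound of $D$ in $K$, so the content of the lemma is that $x$ is the \emph{least} upper bound of $D$, i.e.\ $x = \bigoplus_K D$ (which in particular forces $D \neq \emptyset$).

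To prove leastness I would take an arbitrary upper bound $z \in K$ of $D$ and suppose, for contradiction, that $z \not\geqslant x$, so $z \notin \uparrow\!\!x$. Since $\uparrow\!\!x$ is a nonempty closed upper subset of $K$, Proposition~\ref{prop:sep2} supplies a continuous semilattice-morphism $\varphi : K \to [0,1]$ with $\varphi(\uparrow\!\!x) = \{1\}$ and $\varphi(z) = 0$; thus $\varphi(x) = 1$, while monotonicity of $\varphi$ and $e \leqslant z$ for all $e \in D$ force $\varphi \equiv 0$ on $D$. The ``no line'' hypothesis enters through the auxiliary claim I would establish next: \emph{every nonempty closed subset $C$ of $K$ having an upper bound in $K$ has a minimal element.} For this, choose (Zorn) a maximal chain $\Gamma$ in $C$; being bounded above, $\Gamma$ is relatively compact in $K$ because $K$ contains no line; since $K$ is a pospace, $\overline{\Gamma}$ is again a chain, and $\overline{\Gamma} \subset \overline{C} = C$, so $\overline{\Gamma} = \Gamma$ by maximality; hence $\Gamma$ is a compact chain, and Wallace's lemma gives it a least element $m$; finally, if $d \in C$ with $d \leqslant m$ then $\Gamma \cup \{d\}$ is a chain in $C$ containing $\Gamma$, so $d \in \Gamma$ and $d = m$, whence $m$ is minimal in $C$. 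Applied to $C = K \cap \downarrow\!\!x$ this already gives $D \neq \emptyset$, since a minimal element of $K \cap \downarrow\!\!x$ is coirreducible there and $\ex(K \cap \downarrow\!\!x) = \ex K \cap \downarrow\!\!x$.

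To conclude, I would apply the auxiliary claim to $F := \{\, y \in K : y \leqslant x \text{ and } \varphi(y) = 1 \,\}$, which is closed in $K$, bounded above by $x$, and nonempty since $x \in F$. Let $m$ be a minimal element of $F$. Then $m$ is coirreducible in $K$: if $m = \bigoplus G$ with $G \subset K$ finite and nonempty, each $g \in G$ satisfies $g \leqslant m \leqslant x$ and $\max_{g \in G}\varphi(g) = \varphi(\bigoplus G) = \varphi(m) = 1$, so some $g_0 \in G$ lies in $F$ with $g_0 \leqslant m$; minimality forces $g_0 = m$, hence $m \in G$. Thus $m \in \ex K$ and $m \leqslant x$, i.e.\ $m \in D$, so $\varphi(m) = 0$ — contradicting $m \in F$. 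This contradiction yields $z \geqslant x$, so $x = \bigoplus_K D$.

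The crux, and the reason a new argument is needed, is that $K \cap \downarrow\!\!x$ need not be compact, so one cannot reduce to Theorem~\ref{thm:km} by passing to a principal slice as in the compact case; the ``no line'' hypothesis is exactly the substitute, and the single non-routine step is the auxiliary claim — locating a minimal (hence, with the separating morphism $\varphi$ in hand, coirreducible) element of a bounded-above closed subset by compactifying a maximal chain. Everything else is bookkeeping with the continuous, order-preserving morphism $\varphi$.
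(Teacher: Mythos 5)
Your proof is correct and follows essentially the same route as the paper's: for a separating continuous semilattice-morphism $\varphi$ (from Proposition~\ref{prop:sep2}), you locate an extreme point of $K$ below $x$ inside the level set $\{y \leqslant x : \varphi(y)=\varphi(x)\}$ by taking a maximal chain, using the no-line hypothesis to make it compact, applying Wallace's lemma, and exploiting the morphism property to show the resulting minimal element is coirreducible. The only differences are cosmetic --- you argue by contradiction from a putative upper bound $z \not\geqslant x$ rather than constructing $u_\varphi$ for every $\varphi$ up front, and you isolate the chain argument as a reusable claim --- so the substance matches the paper's proof.
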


One may find some similarities between the following proof and that of 
the tropical analogue of Minkowski's theorem in $\mathbb{R}_+^n$ (see Gaubert and Katz \cite[Theorem~3.2]{Gaubert07} and Butkovic, Schneider, and Sergeev \cite[Proposition~24]{Butkovic07}, see also Helbig \cite[Theorem~IV.5]{Helbig88} for a first but less precise statement, and Develin and Sturmfels \cite[Proposition~5]{Develin04} for an analogue of Carath\'eorory's theorem).  

\begin{proof}
If $\varphi : K \rightarrow [0, 1]$ is a semilattice-morphism, we let $K_{\varphi} = \{ u \in K : u \leqslant x, \varphi(u) = \varphi(x) \}$. Let $C$ be a maximal chain in $K_{\varphi}$ (containing $x$). Then $C$ is upper-bounded, contained in $K$, but must not be a line, hence is relatively compact. Since a maximal chain in a poset with a semiclosed topology is always closed \cite[Proposition~VI-5.1]{Gierz03}, and since $K_{\varphi}$ is closed, we deduce that $C$ is compact. In particular, $C$ has a least element $u_{\varphi} \in C$ by Wallace's lemma.   
We show that $u_{\varphi}$ is an extreme point of $K$. If there are $v, w \in K$ such that $u_{\varphi} = v \oplus w$, then $\varphi(x) = \varphi(u_{\varphi}) = \max(\varphi(v), \varphi(w))$. Let us assume, without loss of generality, that $\varphi(x) = \varphi(v)$. It follows that $v \in K_{\varphi}$. Also, $u_{\varphi} \geqslant v$, so that $u_{\varphi} = v$ by definition of $u_{\varphi}$. This proves that $u_{\varphi} \in \ex K$. 

Now let $y \in K$ be some upper bound of the set $\ex K \cap \downarrow\!\! x$ in $K$. Then $y \geqslant u_{\varphi}$ for all $\varphi$, so that $\varphi(y) \geqslant \varphi(u_{\varphi}) = \varphi(x)$ for all $\varphi$. By Proposition~\ref{prop:sep2}, this implies that $y \geqslant x$. This proves that $x = \bigoplus_K \ex K \cap \downarrow\!\! x$. 
\end{proof}

\begin{remark}
We can be more restrictive in the definition of a line. 
Redefine a line in $S$ as an upper-bounded chain $C$ that is not relatively compact and that satisfies $\downarrow\!\! c \not\subset C$, for all $c \in C$. One can check that the previous proof still works. Consequently, the lemma now encompasses the case where the set $S$ is itself a chain (considered as a locally convex topological semilattice when equipped with its interval topology). 
\end{remark}

Let $S$ be a locally convex topological semilattice. 
If $\mathit{\Psi}$ is the set of continuous semilattice-morphisms $\psi : S \rightarrow [0, 1]$, there is a natural mapping $S \rightarrow [0, 1]^{\mathit{\Psi}}$. This is not an injective map in general, for the $\psi$'s do not necessarily separate the points of $S$. 
If we equip the set $[0, 1]^{\mathit{\Psi}}$ with the (compact Hausdorff) product topology, which amounts to the topology of pointwise convergence, we define the \textit{weak topology} $\sigma(S, \mathit{\Psi})$ as the topology on $S$ generated by the family 
\[
\{ \psi^{-1}(V) : \psi \in \mathit{\Psi}, V \mbox{ open in } [0, 1] \}. 
\]
It is coarser than the original topology. 
Moreover, if $K$ is a subset of $S$, then the topology induced on $K$ by $\sigma(S, \mathit{\Psi})$ coincides with $\sigma(K, \mathit{\Psi}|_K)$, where $\mathit{\Psi}|_K$ denotes the family of restrictions of the functions in $\mathit{\Psi}$ to $K$ (see \cite[Lemma~2.53]{Aliprantis06}). 
Note that, if $K$ is a locally compact closed convex subset of $S$, 
we cannot conclude that $\sigma(K, \mathit{\Psi}|_K)$  coincides with the original topology (one would like to use e.g.\ \cite[Theorem~2.55]{Aliprantis06}) because the $\psi$'s restricted to $K$ do not separate points and closed subsets in general. 
We now restate the Klee--Krein--Milman type theorem given in the Introduction (Theorem~\ref{thm:klee0}). 

\begin{theorem}[Klee--Krein--Milman for semilattices]\label{thm:klee}
In a locally con\-vex topological semilattice, every locally compact weakly-closed convex subset containing no line is the weakly-closed convex hull of its extreme points. 
\end{theorem}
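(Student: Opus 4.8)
The plan is to deduce Theorem~\ref{thm:klee} from Lemma~\ref{lem:klee} by essentially the same closure argument used in the proof of Theorem~\ref{thm:km}, but carried out in the weak topology $\sigma(S,\mathit\Psi)$ rather than the original one. Let $K$ be a locally compact weakly-closed convex subset of $S$ containing no line, and fix $x\in K$. By Lemma~\ref{lem:klee} we have $x=\bigoplus_K(\ex K\cap\downarrow\!\! x)$, the supremum being taken in $K$. The goal is to show $x$ lies in the weak closure of $\co(\ex K)$, and for this it suffices to show $x$ lies in the weak closure of the subsemilattice $T_0:=\co(\ex K\cap\downarrow\!\! x)$.

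First I would observe that every $\psi\in\mathit\Psi$ restricted to $K$ is continuous for the weak topology (by definition of $\sigma(S,\mathit\Psi)$), and that $\psi$ preserves the supremum $\bigoplus_K(\ex K\cap\downarrow\!\! x)$ in the relevant sense: since each $u\in\ex K\cap\downarrow\!\! x$ satisfies $u\leqslant x$, monotonicity gives $\psi(u)\leqslant\psi(x)$, and conversely if $y\in K$ is an upper bound of $\ex K\cap\downarrow\!\! x$ then $\psi(y)\geqslant\psi(u_\varphi)=\psi(x)$ for the $u_\varphi$ produced in the proof of Lemma~\ref{lem:klee}; in particular $\psi(x)=\sup\{\psi(u):u\in\ex K\cap\downarrow\!\! x\}=\bigvee_{f\in F}\psi(f)$ for suitable finite $F$ whenever $\psi(x)$ is attained, and more to the point $\psi(x)$ is the supremum of $\psi$ over $\ex K\cap\downarrow\!\! x$ in $[0,1]$. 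The second step is the separation input: if $x$ were \emph{not} in the weak closure $\overline{T_0}^{\,w}$, then since $\overline{T_0}^{\,w}$ is a weakly-closed subsemilattice of $K$ (closure stability holds in any topological semilattice, and $(K,\sigma(S,\mathit\Psi)|_K)$ is a topological semilattice because each $\psi$ is a morphism), the upper set $A:=\uparrow_K\!\overline{T_0}^{\,w}$ is weakly closed (one needs that $\uparrow$ of a weakly-closed set is weakly closed here — this is where care is required) and $x\notin A$, so Proposition~\ref{prop:sep2} — applied inside a compact convex convact neighbourhood of $x\oplus(\text{an element of }T_0)$, or directly once one knows the ambient structure is locally convact — yields a morphism $\varphi$ with $\varphi(A)=\{1\}$, $\varphi(x)=0$; but $\varphi(u)=1$ for every $u\in\ex K\cap\downarrow\!\! x\subset T_0\subset A$, forcing $\varphi(x)=\sup_{u}\varphi(u)=1$, a contradiction.

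The third step handles the reduction from ``$x$ in the weak closure of $\co(\ex K)$ for each $x$'' to ``$K=\overline{\co}^{\,w}(\ex K)$'': since this holds for every $x\in K$ we get $K\subset\overline{\co}^{\,w}(\ex K)$, and the reverse inclusion is immediate because $\ex K\subset K$ and $K$ is weakly closed and convex. One should also record that $\ex K$ computed in the relative original topology and in the relative weak topology agree, since extreme points are defined purely convex-combinatorially (coirreducibility in $K$) and do not reference the topology at all; so there is no ambiguity in the statement.

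\textbf{The main obstacle} I anticipate is that $K$, being only locally compact and not compact, is not itself a compact semilattice, so Proposition~\ref{prop:sep2} (which is stated for locally convact semilattices, i.e.\ locally convex \emph{and} locally compact ones — and $K$ with the relative \emph{original} topology is such) must be invoked on $K$ with its original topology and then the resulting morphism restricted/extended so as to be weakly continuous; since the morphisms $\varphi$ produced by Proposition~\ref{prop:sep2} are by construction continuous semilattice-morphisms $K\to[0,1]$ that preserve arbitrary existing suprema, they belong (after noting $K$ is closed, or after composing with a retraction, or simply because what we really need is their restriction to $K$) to the family whose initial topology \emph{is} $\sigma(S,\mathit\Psi)|_K=\sigma(K,\mathit\Psi|_K)$, so weak continuity is automatic. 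The delicate point to get exactly right is therefore the interplay between ``$\mathit\Psi$ = morphisms on $S$'' and ``morphisms on $K$'': one must check that separating $x$ from the closed upper subsemilattice $A\subset K$ by a morphism $K\to[0,1]$ is enough, which it is, precisely because $\sigma(S,\mathit\Psi)|_K=\sigma(K,\mathit\Psi|_K)$ as noted before the theorem, together with the fact that $\mathit\Psi|_K$ is cofinal among continuous morphisms $K\to[0,1]$ in the sense needed — or, more cleanly, that the proof of Lemma~\ref{lem:klee} already only used morphisms $K\to[0,1]$ and Proposition~\ref{prop:sep2} on $K$, so the entire argument takes place intrinsically on $K$ and the phrase ``weakly-closed'' can be read as ``closed in $\sigma(K,\mathit\Psi|_K)$''.
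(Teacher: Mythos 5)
Your reduction to showing that $x$ lies in the weak closure of $T_0=\co(\ex K\cap\downarrow\!\! x)$ is the right one, and your first and third steps are essentially fine, but the central separation step contains a genuine error. You set $A:=\uparrow_K\overline{T_0}^{\,w}$ and claim that $x\notin A$ under the assumption $x\notin\overline{T_0}^{\,w}$. This is false: every element of $T_0$ is a finite join of extreme points of $K$ lying below $x$, hence is $\leqslant x$, so $x$ is an upper bound of $T_0$ and therefore $x\in\uparrow\!\! T_0\subset A$ \emph{unconditionally}. Consequently the hypothesis of Proposition~\ref{prop:sep2} fails, and no morphism $\varphi$ with $\varphi(A)=\{1\}$ and $\varphi(x)=0$ can exist; your closing computation $\varphi(x)=\sup_u\varphi(u)=1$ is precisely the verification that such a $\varphi$ does not exist, so the ``contradiction'' refutes only the false assertion $x\notin\uparrow_K\overline{T_0}^{\,w}$, not the assumption $x\notin\overline{T_0}^{\,w}$. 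An upper-set separation can never distinguish ``$x$ belongs to the weak closure of $T_0$'' from ``$x$ is an upper bound of $T_0$'', and the entire content of the theorem lies in that distinction (compare Theorem~\ref{thm:km}, where passing from ``$x$ is the supremum of a closed subsemilattice'' to ``$x$ belongs to it'' requires compactness of $K$ and \cite[Proposition~VI-2.9]{Gierz03}, neither of which is available here).

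The missing idea is to exploit the specific form of basic weak neighbourhoods rather than a separation theorem. If $x\notin D^*$ (the weak closure of $D=T_0$), then there are finitely many $\psi_1,\ldots,\psi_k\in\mathit{\Psi}$ and open sets $V_1,\ldots,V_k\subset[0,1]$ with $x\in\bigcap_{j}\psi_j^{-1}(V_j)\subset S\setminus D^*$. Writing $\varphi_j=\psi_j|_K$, the proof of Lemma~\ref{lem:klee} produces for each $j$ an extreme point $u_{\varphi_j}\leqslant x$ with $\varphi_j(u_{\varphi_j})=\varphi_j(x)$; the point is that the supremum $\varphi_j(x)=\sup\{\varphi_j(u):u\in\ex K\cap\downarrow\!\! x\}$ is \emph{attained} at a single extreme point. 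The single element $u=u_{\varphi_1}\oplus\cdots\oplus u_{\varphi_k}$ then lies in $D$, and squeezing $u_{\varphi_j}\leqslant u\leqslant x$ forces $\varphi_j(u)=\varphi_j(x)\in V_j$ for every $j$, so $u$ lies in the chosen neighbourhood, contradicting $u\in D\subset D^*$. This finiteness-plus-directedness argument is what replaces the compactness used in Theorem~\ref{thm:km}; your proposal as written does not contain it.
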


\begin{proof}
Let $S$ be a locally con\-vex topological semilattice, let $K$ be a locally compact weakly-closed convex subset of $S$ containing no line, and let $x \in K$. Since $K$ is closed in the weak topology, it is closed in the original topology, so the previous lemma applies: we have $x = \bigoplus_K D$, where $D$ is the directed subset $\co(\ex K \cap \downarrow\!\! x)$. We have to show that $x$ is in the weak closure $D^*$ of $D$. 
So assume that $x \notin D^*$. By definition of the weak topology, there are open subsets $V_1, \ldots, V_k$ of $[0,1]$ and continuous semilattice-morphisms $\psi_1, \ldots, \psi_k : S \rightarrow [0, 1]$ such that $x \in \bigcap_{j = 1}^k \psi_j^{-1}(V_j) \subset S \setminus D^*$. Let us denote by $\varphi_1, \ldots, \varphi_k$ the respective restrictions of $\psi_1, \ldots, \psi_k$ to $K$. Using the notations of the proof of Lemma~\ref{lem:klee}, we let $u = u_{\varphi_1} \oplus \ldots \oplus u_{\varphi_k}$. Then $u$ is in $D$ as a finite join of extreme points of $K$ below $x$. Remembering that $\varphi_j(u_{\varphi_j}) = \varphi_j(x)$ for all $j$, one can see that $\varphi_j(u) = \varphi_j(x)$ for all $j$. This implies that $\varphi_j(u) \in V_j$ for all $j$, thus $u \in S \setminus D^*$, a contradiction. 
\end{proof}

\subsection{Milman's converse}

In Section~\ref{sec:or} we have proved Milman's theorem in po\-spaces with the lower, upper, or order convexity. For topological semilattices, this result is less evident, since the convex hull of a compact subset does not need to be closed in general. 
Fortunately, it does work, even for locally compact convex subsets.  
The next lemma is interesting in its own right.  

\begin{lemma}\label{lem:ch}
Let $S$ be a locally convex topological semilattice, and $K$ be a locally compact convex subset of $S$. Then, for every compact subset $A$ of $K$ and every $x \in \ex K$, $x = \bigoplus_K A$ implies $x \in A$.  
\end{lemma}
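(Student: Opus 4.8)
The plan is to argue by contradiction: assuming $x \notin A$, I will produce \emph{finitely many} elements of a covering family whose join is still $x$, and then invoke the coirreducibility of $x$ in $K$ (which is exactly what $x \in \ex K$ means for the algebraic convexity) to get a contradiction. First I would note that $K$, with the relative topology, is itself a locally convact topological semilattice: it is locally compact by hypothesis, Hausdorff as a subspace of $S$, a topological semilattice since $\oplus$ restricts continuously, and locally convex because a subsemilattice of a semilattice with small semilattices again has small semilattices (\cite[Proposition~VI-3.2(i)]{Gierz03}); this allows me to apply Lemma~\ref{lem:compact} inside $K$. I would also record that, $K$ being Hausdorff with continuous $\oplus$, the order relation is closed on $K \times K$, so that $\uparrow\!\! x$ is closed, and $\downarrow\!\! x \cap K = \{y \in K : y \leqslant x\}$ is a closed subsemilattice of $K$.

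Next comes the construction. For $a \in A$ we have $a \leqslant x$ (as $x = \bigoplus_K A$ is an upper bound of $A$) while $a \neq x$, hence $a \notin \uparrow\!\! x$; using local convactness of $K$ I pick a convact neighbourhood $V_a$ of $a$ disjoint from $\uparrow\!\! x$. Then $C_a := V_a \cap \downarrow\!\! x$ is a nonempty (it contains $a$) compact convex subset of $K$, so by Lemma~\ref{lem:compact} it has a greatest element $c_a := \bigoplus_K C_a$; this $c_a$ satisfies $c_a \leqslant x$ (since $C_a \subseteq \downarrow\!\! x$) and $c_a \notin \uparrow\!\! x$ (since $c_a \in V_a$), hence $c_a \neq x$. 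By compactness of $A$ there are $a_1,\dots,a_n \in A$ with $A \subseteq V_{a_1} \cup \cdots \cup V_{a_n}$, and since $A \subseteq \downarrow\!\! x$ we even get $A \subseteq C_{a_1} \cup \cdots \cup C_{a_n}$. As each $c_{a_i}$ is the top of $C_{a_i}$, the element $c_{a_1} \oplus \cdots \oplus c_{a_n}$ is an upper bound of $A$, so $x \leqslant c_{a_1} \oplus \cdots \oplus c_{a_n}$; and since every $c_{a_i} \leqslant x$, the reverse inequality holds too, so $x = c_{a_1} \oplus \cdots \oplus c_{a_n}$. Coirreducibility of $x$ in $K$ then forces $x = c_{a_i}$ for some $i$, contradicting $c_{a_i} \neq x$, and this contradiction gives $x \in A$.

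The step I expect to be the real crux — the reason the argument succeeds rather than stalling at the useless inequality $x \leqslant (\text{finite join})$ — is arranging that the finite join of the $c_{a_i}$ lies \emph{below} $x$ as well. This is precisely why I intersect each neighbourhood $V_a$ with the principal ideal $\downarrow\!\! x$ before taking suprema: the greatest element of $V_a$ alone need not be $\leqslant x$, so one must work with $V_a \cap \downarrow\!\! x$, which is still compact, convex and a neighbourhood-trace of $a$ within $A$ because $A \subseteq \downarrow\!\! x$. The only other point requiring a word is the degenerate case $A = \emptyset$, where $\bigoplus_K A$ is undefined (or, if $K$ has a least element, equal to it, which is trivially extreme yet not in $A$); I would simply exclude it, the equality $x = \bigoplus_K A$ presupposing $A$ nonempty.
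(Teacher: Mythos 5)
Your proof is correct, but it takes a genuinely different route from both proofs in the paper. The paper's first proof invokes the Liukkonen--Mislove result (Proposition~\ref{prop:liu}) to extract a \emph{minimal} $B \subset A$ with $\bigoplus_K B = \bigoplus_K A$ and then reduces to a two-term join $x = b_0 \oplus b$; its second proof shares your finite-cover-plus-coirreducibility skeleton but builds the cover from sublevel sets $\{\varphi_a < 1/2\}$ of the sup-preserving morphisms of Proposition~\ref{prop:sep2}, and derives the contradiction numerically via $1 = \varphi_{a_0}(x) \leqslant 1/2$. You instead stay entirely order-topological: you exploit closedness of the order in the Hausdorff topological semilattice $K$ to get $\uparrow\!\! x$ closed, choose convact neighbourhoods $V_a$ avoiding $\uparrow\!\! x$, and use the greatest elements $c_a$ of the compact convex traces $V_a \cap \downarrow\!\! x$ (Lemma~\ref{lem:compact}) as your finite set of joinands, with the contradiction coming geometrically from $c_{a_i} \in V_{a_i}$. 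This buys you independence from both Proposition~\ref{prop:liu} and the Urysohn-type separation machinery, at the cost of a slightly longer sandwich argument ($x \leqslant c_{a_1} \oplus \cdots \oplus c_{a_n} \leqslant x$); your observation that convexity of $V_a$ is what forces $c_a$ back into $V_a$, hence $c_a \neq x$, is exactly the right crux, and your exclusion of $A = \emptyset$ matches the paper's implicit convention.
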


\begin{proof}[First proof]
By Proposition~\ref{prop:liu}, there is a minimal subset $B$ of $A$ such that $\bigoplus_K B = \bigoplus_K A$. 
If $B$ is empty or a singleton, then $x \in A$ is clear. 
Otherwise, let $b\in B$. Then $B \setminus\{b\}$, as a nonempty relatively compact subset of $A$, has a supremum $b_0$ in $K$ by Lemma~\ref{lem:compact}. 
Moreover, $x = \bigoplus_K B = b_0 \oplus b$. Since $x \in \ex K$, we get $x \in \{ b_0, b\}$. By minimality of $B$, $x \neq b_0$, so $x = b \in A$. 
\end{proof}

\begin{proof}[Second proof]
Assume that $x \not\in A$. One may wish to apply Proposition~\ref{prop:sep3}, but here we do not assume $A$ to be convex. 
For every $a \in A$, $a \not\geqslant x$, and by Proposition~\ref{prop:sep2} there exists a continuous semilattice-morphism $\varphi_a : K \rightarrow [0,1]$ such that $\varphi_a(a) = 0$ and $\varphi_a(x) = 1$. 
Let $V_a$ be the open subset $\{ \varphi_a < 1/2\}$ of $K$. The compact set $A$ is covered by the open family $\{ V_a \}_{a \in A}$, so we can extract a finite subfamily $\{ V_a \}_{a \in F}$ still covering $A$. If $H_a := \{ \varphi_a \leqslant 1/2\}$, we deduce that $A = \bigcup_{a \in F} (A \cap H_a)$. Every $A \cap H_a$ is compact and can be supposed nonempty, hence has a supremum in $K$ by Lemma~\ref{lem:compact}, thus $x = \bigoplus_K A = \bigoplus_{a \in F} (\bigoplus_K A \cap H_a)$. But $x$ is an extreme point of $K$, so that $x = \bigoplus_K (A \cap H_{a_0})$ for some $a_0 \in F$. Proposition~\ref{prop:sep2} also says that $\varphi_{a_0}$ can be choosen so as to preserve arbitrary nonempty suprema in $K$, so $1 = \varphi_{a_0}(x) = \varphi_{a_0}(\bigoplus_K A \cap H_{a_0}) = \bigoplus \varphi_{a_0}(A \cap H_{a_0}) \leqslant 1/2$, a contradiction. 
\end{proof}

\begin{remark}
Compare Lemma~\ref{lem:ch} with \cite[Corollary~V-1.4]{Gierz03}, which is a similar result that holds in continuous lattices. See also \cite[p.\ 403]{Gierz03}. 
\end{remark}

\begin{theorem}[Milman for semilattices]
Let $S$ be a locally convex topological semilattice, and $K$ be a locally compact closed convex subset of $S$. Then, for each compact subset $A$ of $K$ such that $K = \overline{\co}(A)$, we have $A \supset \ex K$. 
\end{theorem}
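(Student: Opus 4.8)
The plan is to argue by contradiction, following the classical proof of Milman's theorem, but replacing linear separation by Proposition~\ref{prop:sep3} and ``convex combinations of points lying in small pieces'' by the corresponding statement for joins. We may assume $A\neq\emptyset$ (otherwise $K=\emptyset$ and there is nothing to prove). Note first that $K$, being a locally compact closed convex subset of the locally convex topological semilattice $S$, is a locally convact topological semilattice in its own right; moreover, by \cite[Lemma~5.2]{Lawson69} (used as in the proof of Lemma~\ref{lem:compact}) the compact set $A$ is contained in some compact convex subset $C$ of $K$, and since $K=\overline{\co}(A)\subset C$ this shows $K$ to be in fact compact. So fix $x\in\ex K$ and suppose, for contradiction, that $x\notin A$.

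The next step is to separate $x$ from each point of $A$. For every $a\in A$ we have $x\neq a$, so $x\notin\{a\}$; since $\{a\}$ is a compact convex subset of $K$, Proposition~\ref{prop:sep3} supplies an open convex neighbourhood $V_a$ of $a$ with $x\notin\overline{V_a}$. By closure stability, $W_a:=\overline{V_a}$ is a closed subsemilattice of $K$ with $a\in W_a$ and $x\notin W_a$. Since $(V_a)_{a\in A}$ covers the compact set $A$, I extract a finite $F\subset A$ with $A\subset\bigcup_{a\in F}V_a$, whence $A=\bigcup_{a\in F}(A\cap W_a)$, each $A\cap W_a$ being compact (closed in $A$) and nonempty (it contains $a$).

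Now put $K_a:=\overline{\co}(A\cap W_a)$ for $a\in F$. Since $W_a$ is a closed subsemilattice containing $A\cap W_a$, we have $K_a\subset W_a$, hence $x\notin K_a$; and $K_a$ is a closed subsemilattice of the compact semilattice $K$, hence compact. The key claim is then $K=\co(\bigcup_{a\in F}K_a)$. One inclusion is immediate: this convex hull lies in $K$ because $K$ is convex and contains each $K_a$. For the other, the subsemilattice generated by $\bigcup_{a\in F}K_a$ is the union, over the nonempty subsets $J$ of $F$, of the sets $\{\bigoplus_{a\in J}k_a : k_a\in K_a\ (a\in J)\}$, each of which is the image of the compact set $\prod_{a\in J}K_a$ under a continuous finite iteration of $\oplus$, hence compact; so $\co(\bigcup_{a\in F}K_a)$ is a finite union of compact sets, hence compact, hence closed. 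Being closed, convex, and containing $A$, it contains $\overline{\co}(A)=K$, which gives the claim. Finally, $x\in K=\co(\bigcup_{a\in F}K_a)$ means $x=\bigoplus_{a\in J}k_a$ for some nonempty $J\subset F$ and $k_a\in K_a$; since $x$ is coirreducible in $K$ and each $k_a$ lies in $K$, we get $x=k_{a_0}$ for some $a_0\in J$, so $x\in K_{a_0}\subset W_{a_0}$, contradicting $x\notin W_{a_0}$. Hence $x\in A$, proving $\ex K\subset A$.

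The step I expect to be the crux is establishing that $\co(\bigcup_{a\in F}K_a)$ is already closed, i.e.\ that the convex hull of finitely many compact convex subsemilattices is compact; that is exactly what allows the coirreducibility of $x$ to be exploited. Everything else is routine bookkeeping with subsemilattices, once Proposition~\ref{prop:sep3} is available. An alternative route, closer to the preceding lemmas, would be to produce a compact subset $B\subset A$ with $x=\bigoplus_K B$ and invoke Lemma~\ref{lem:ch} directly; but it is not clear that the obvious candidate $B=A\cap\downarrow\!\! x$ satisfies $\bigoplus_K B=x$, so the separation argument above seems the safer bet.
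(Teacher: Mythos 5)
Your proof is correct, but it takes a genuinely different route from the paper's. The paper argues as follows: setting $B=A\cap\downarrow\!\!x$, it uses Lemma~\ref{lem:ch} to conclude $x\neq\bigoplus_K B$, hence $x\not\leqslant\bigoplus_K B$, and then \emph{reruns} the construction from the proof of Proposition~\ref{prop:sep3} on the (non-convex) compact set $A$ to produce a single closed convex neighbourhood $\overline{V}$ of $A$ with $x\notin\overline{V}$; since $\overline{V}\supset\overline{\co}(A)=K\ni x$, this is a contradiction (the case $B=\emptyset$ being handled by Proposition~\ref{prop:sep2} alone). You instead apply Proposition~\ref{prop:sep3} only to singletons, where its hypotheses hold verbatim, and compensate by a finite-cover argument: you observe that $K$ is in fact compact (a correct and slightly surprising consequence of Lawson's Lemma~5.2, exactly as it is used in Lemma~\ref{lem:compact}), cover $A$ by finitely many closed convex sets $W_a$ avoiding $x$, and prove that the convex hull of the finitely many compact convex sets $K_a=\overline{\co}(A\cap W_a)$ is already compact, hence closed, hence equal to $K$; coirreducibility of $x$ then forces $x$ into some $W_a$. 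Your crux --- that the subsemilattice generated by finitely many compact subsemilattices is a finite union of continuous images of products, hence compact --- is sound and is a relative of Lemma~\ref{lem:b}; your covering-plus-extremality step closely mirrors the paper's \emph{second} proof of Lemma~\ref{lem:ch}. What your route buys is independence from Lemmas~\ref{lem:compact} and~\ref{lem:ch} (no suprema of compact sets are needed) and a cleaner appeal to Proposition~\ref{prop:sep3} as a black box; what the paper's route buys is brevity given its preceding lemmas and no need to first reduce to the compact case. Your closing caution is also well placed: the paper never claims $x=\bigoplus_K(A\cap\downarrow\!\!x)$, only that this equality \emph{fails}, which is what unlocks the separation.
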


\begin{proof}
Let $x \in \ex K$, and assume that $x \not\in A$. Let $B := A \cap \downarrow\!\! x$, and suppose at first that $B$ is nonempty. Then $B$ is nonempty compact, so admits a supremum $b = \bigoplus_K B$ in $K$ by Lemma~\ref{lem:compact}. Moreover, $x \neq b$ by the preceding lemma. 
Now the same method used in the proof of Proposition~\ref{prop:sep3} provides a closed convex neighbourhood $\overline{V}$ of $A$ such that $x \notin \overline{V}$. But $\overline{V} \supset \overline{\co}(A) = K$, a contradiction. 
If $B$ is empty, then $x \notin \uparrow\!\! A$, so with Proposition~\ref{prop:sep2} we can separate $x$ and the closed upper subset $\uparrow\!\! A$ by a continuous semilattice-morphism, and the same contradiction appears. 
\end{proof}

\section{Semilattices with finite breadth}\label{sec:fb}

\subsection{Breadth and Minkowski's theorem}

In locally convex topological semilattices, an important subclass is that of topological semilattices with finite breadth. The \textit{breadth} is defined as the least integer $b$ such that, for all nonempty finite subsets $F$, there exists some $G \subset F$ with at most $b$ elements such that $\bigoplus F = \bigoplus G$. It turns out that the breadth has a direct geometric interpretation, for as noticed by Jamison \cite[Paragraph~4.D]{Jamison82} it coincides with the Carath\'eodory number of the semilattice equipped with its algebraic convexity. 
The next lemma prepares a series of results on topological semilattices with finite breadth. 

\begin{lemma}\label{lem:b}
Let $S$ be a topological semilattice with finite breadth $b$.  
If $A$ is a compact subset of $S$, so is $\co(A)$. 
\end{lemma}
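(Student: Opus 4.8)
The plan is to exhibit $\co(A)$ as the continuous image of a compact space. Recall that, for the algebraic convexity, $\co(A)$ is the subsemilattice generated by $A$; when $A = \emptyset$ this is $\emptyset$ and there is nothing to prove, so I would assume $A \neq \emptyset$. In that case $\co(A)$ is the set of all joins $\bigoplus F$ over nonempty finite subsets $F$ of $A$. Invoking the finite breadth $b$, every such join equals $\bigoplus G$ for some $G \subset F$ with $1 \leqslant |G| \leqslant b$, and conversely each $\bigoplus G$ with $G \subset A$ finite lies in $\co(A)$; hence
$$
\co(A) = \{ \bigoplus G : G \subset A,\ 1 \leqslant |G| \leqslant b \}.
$$

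Next I would identify this set as the image of an iterated addition. Let $m_b : S^b \to S$ be given by $m_b(x_1, \ldots, x_b) = x_1 \oplus \cdots \oplus x_b$; it is continuous, being built by finitely many compositions of the continuous map $\oplus : S \times S \to S$ (formally, $m_1 = \mathrm{id}$ and $m_{k+1}(x_1, \ldots, x_{k+1}) = m_k(x_1, \ldots, x_k) \oplus x_{k+1}$, continuous by induction). By idempotency, a join of $j \leqslant b$ elements of $A$ is left unchanged when the list is padded with $b - j$ repetitions of one of its entries, so $m_b(A^b)$ equals precisely the right-hand side above; that is, $\co(A) = m_b(A^b)$.

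Finally, $A$ compact implies $A^b$ compact (a finite product of compact spaces is compact, with no Hausdorff assumption needed), and the continuous image $m_b(A^b)$ of a compact set is compact. Therefore $\co(A)$ is compact. I do not expect a genuine obstacle here: the argument is essentially immediate once the breadth bound reduces arbitrary finite joins to joins of at most $b$ terms; the only points requiring a little care are the separate treatment of $A = \emptyset$ and the idempotency/padding remark guaranteeing that $m_b(A^b)$ is exactly $\co(A)$ rather than merely a superset of it.
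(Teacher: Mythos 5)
Your proof is correct and follows essentially the same route as the paper: both identify $\co(A)$ with the set $\{x_1 \oplus \cdots \oplus x_b : x_i \in A\}$ (the paper by noting this set is already a subsemilattice by the breadth bound, you by reducing arbitrary finite joins to joins of at most $b$ terms and padding via idempotency) and then conclude by writing $\co(A)$ as the continuous image of the compact set $A^b$. The extra care you take with the case $A = \emptyset$ and with the induction showing $m_b$ is continuous is fine but not a substantive difference.
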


\begin{proof}
First remark that $A_b := \{ x_1 \oplus \ldots \oplus x_b : x_1, \ldots, x_b \in A \}$ is a set between $A$ and $\co(A)$. Moreover, this is a semilattice by definition of breadth, hence $\co(A) = A_b$. 
This also means that $\co(A)$ is the image of $A \times \ldots \times A$ by the continuous map $\phi : S \times \ldots \times S \rightarrow S$, $(x_1, \ldots, x_b) \mapsto x_1 \oplus \ldots \oplus x_b$. So if $A$ is compact, $\co(A) = \phi(A \times \ldots \times A)$ is compact. 
\end{proof}

The following result is due to Lawson \cite[Theorem~1.1]{Lawson71}; it is a consequence of Lemma~\ref{lem:b}. 

\begin{proposition}[Lawson]
Every topological semilattice with finite breadth $b$ is locally convex. 
\end{proposition}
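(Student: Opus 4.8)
The plan is to use Lemma~\ref{lem:b} together with the fundamental theorem of compact semilattices (Theorem~\ref{thm:fond}), which identifies compact \emph{locally convex} topological semilattices with complete continuous semilattices. Since local convexity is precisely the property we wish to establish, and the fundamental theorem already gives us this property for \emph{compact} topological semilattices that happen to be complete continuous, the strategy is to show that a topological semilattice $S$ with finite breadth $b$ has small semilattices at every point by producing, for each point and each neighbourhood of it, a convex neighbourhood inside. The key leverage is that, in finite breadth, convex hulls of compact sets are compact (Lemma~\ref{lem:b}), so convex hulls of small compact neighbourhoods stay small.

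Concretely, first I would fix $x \in S$ and an open neighbourhood $U$ of $x$. By continuity of the iterated join map $\phi \colon S^b \to S$, $(x_1,\dots,x_b) \mapsto x_1 \oplus \dots \oplus x_b$, and the fact that $\phi(x,\dots,x) = x \in U$, there is an open neighbourhood $W$ of $x$ with $\phi(W \times \dots \times W) \subset U$, i.e.\ $\co(W') \subset U$ for every $W' \subset W$ — using here that by the breadth-$b$ hypothesis $\co(W') = \{x_1 \oplus \dots \oplus x_b : x_i \in W'\}$ as in the proof of Lemma~\ref{lem:b}. So already $V := \co(W)$ is a convex set with $x \in W \subset V \subset U$. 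The only thing missing is that $V$ be a \emph{neighbourhood}, which it is, since it contains the open set $W$. Hence every point has a basis of convex (not necessarily open or compact) neighbourhoods, which is exactly the definition of local convexity given in Section~\ref{sec:convsemilat}.

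Actually the argument just sketched does not even need Lemma~\ref{lem:b} in its full compactness form — only the algebraic identity $\co(A) = A_b$ valid in breadth $b$, combined with continuity of $\phi$. That identity is what Lemma~\ref{lem:b} is built on, so citing Lemma~\ref{lem:b} (or rather the first sentence of its proof) is the cleanest route. I would therefore write: let $S$ have breadth $b$, fix $x$ and an open $U \ni x$; by continuity of $\phi$ choose open $W \ni x$ with $\phi(W^b) \subset U$; then $\co(W) = \{x_1 \oplus \dots \oplus x_b : x_i \in W\} = \phi(W^b) \subset U$, and $\co(W)$ is a convex neighbourhood of $x$ contained in $U$. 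Thus $S$ is locally convex.

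The main obstacle — really the only subtlety — is making sure the breadth hypothesis is invoked correctly: $\co(W) = W_b$ requires $W_b$ to already be a subsemilattice, which holds because for $x_1,\dots,x_b,y_1,\dots,y_b \in W$ the join $(x_1 \oplus \dots \oplus x_b) \oplus (y_1 \oplus \dots \oplus y_b)$ is a join of $2b$ elements of $W$ and hence, by definition of breadth, equals a join of at most $b$ of them, so lies in $W_b$; and $W \subset W_b \subset \co(W)$ is immediate, giving equality. This is exactly the content of the first two sentences of the proof of Lemma~\ref{lem:b}, so I would simply reference it. Everything else is a one-line continuity argument, and no completeness, Hausdorffness beyond what is assumed, or appeal to Theorem~\ref{thm:fond} is actually required.

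\begin{proof}
Let $S$ be a topological semilattice with breadth $b$, fix $x \in S$, and let $U$ be an open neighbourhood of $x$. Consider the continuous map $\phi : S \times \dots \times S \rightarrow S$, $(x_1, \dots, x_b) \mapsto x_1 \oplus \dots \oplus x_b$ ($b$ factors). Since $\phi(x, \dots, x) = x \in U$, there is an open neighbourhood $W$ of $x$ such that $\phi(W \times \dots \times W) \subset U$. As in the proof of Lemma~\ref{lem:b}, the set $W_b := \{ x_1 \oplus \dots \oplus x_b : x_1, \dots, x_b \in W \}$ is a subsemilattice of $S$ by definition of breadth, and $W \subset W_b \subset \co(W)$, hence $\co(W) = W_b = \phi(W \times \dots \times W) \subset U$. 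Thus $\co(W)$ is a convex neighbourhood of $x$ contained in $U$. Since $x$ and $U$ were arbitrary, every point of $S$ has a basis of convex neighbourhoods, i.e.\ $S$ is locally convex.
\end{proof}
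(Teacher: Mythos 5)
Your proof is correct and follows essentially the same route as the paper's: continuity of the iterated join $\phi$ at $(x,\dots,x)$ gives an open $W$ with $\phi(W\times\dots\times W)\subset U$, and the breadth-$b$ identity $\co(W)=W_b=\phi(W\times\dots\times W)$ from the proof of Lemma~\ref{lem:b} shows this is a convex neighbourhood of $x$ inside $U$. The only difference is that you spell out the verification that $W_b$ is a subsemilattice, which the paper delegates to Lemma~\ref{lem:b}.
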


\begin{proof}
Let $G$ be an open subset containing some point $x$. The continuity of $\phi$ defined above and the fact that $\phi(x, \ldots, x) \in G$ imply that $x \in V \subset V_b \subset G$ for some open subset $V$, where $V_b := \{x_1 \oplus \ldots \oplus x_b : x_1, \ldots, x_b \in V \}$. Thus $V_b = \co(V)$ is a convex neighbourhood of $x$ contained in $G$. 
\end{proof}

A topological semilattice has \textit{compactly finite breadth} if every nonempty compact subset $A$ contains a finite subset $F$ with $\bigoplus A = \bigoplus F$. See Liukkonen and Mislove \cite[Theorem~1.5]{Liukkonen83} for equivalent conditions in locally convact topological semilattices, and Lawson et al.\ \cite[Theorem~1.11]{Lawson85} for additional	conditions. 
Another consequence of Lemma~\ref{lem:b} is that ``finite breadth'' is stronger than ``compactly finite breadth'' in a locally compact semilattice. 

\begin{corollary}
Every locally compact topological semilattice with finite breadth has compactly finite breadth. 
\end{corollary}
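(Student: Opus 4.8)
The plan is to combine the two previous results in this subsection, namely Lemma~\ref{lem:b} and the notion of compactly finite breadth, and observe that finite breadth bounds the size of the ``representing'' subset uniformly. First I would take a nonempty compact subset $A$ of a locally compact topological semilattice $S$ with finite breadth $b$. By Lemma~\ref{lem:b}, the convex hull $\co(A)$ is compact; in fact the proof of that lemma identifies $\co(A)$ with the set $A_b = \{x_1 \oplus \ldots \oplus x_b : x_i \in A\}$, which is the continuous image $\phi(A^b)$ of the compact set $A^b$.

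Next, since $\co(A)$ is a compact subsemilattice, it has a supremum; indeed one may apply Lemma~\ref{lem:compact} (or simply Lemma~\ref{lem:b} together with Lawson's result that $S$ is locally convex, hence locally convact on a neighbourhood of the compact $\co(A)$) to conclude that $\bigoplus_S A = \bigoplus_S \co(A)$ exists and is the greatest element of $\co(A) = A_b$. Write $m = \bigoplus_S A$. Then $m \in A_b$, so there are $x_1, \ldots, x_b \in A$ with $m = x_1 \oplus \ldots \oplus x_b$. Setting $F = \{x_1, \ldots, x_b\}$, which is a finite subset of $A$ with at most $b$ elements, we get $\bigoplus F = m = \bigoplus A$. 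This is exactly the defining property of compactly finite breadth.

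The only mild subtlety, and the step I expect to require the most care, is justifying that $\bigoplus_S A$ actually exists as a supremum in the (possibly non-complete) ambient semilattice $S$: finite breadth alone does not make $S$ complete. Here I would lean on the fact that $A$ is compact and $S$ is locally compact, so that $A$ is relatively compact and, by Lawson's local convexity result, sits inside a compact convex subset $C$ of $S$; then Lemma~\ref{lem:compact} gives $\bigoplus_S A = \bigoplus_C A$, and since $C = \co(A)$ up to enlargement we recover $\bigoplus A \in A_b$. Alternatively, and more directly, one avoids suprema altogether: it suffices to note that $\co(A) = A_b$ is itself a compact subsemilattice, apply Wallace's lemma or Lemma~\ref{lem:compact} to get its greatest element $g$, and observe $g \in A_b$ yields the finite $F \subset A$ with $\bigoplus F = g = \bigoplus \co(A) = \bigoplus A$. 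Everything else is routine bookkeeping.
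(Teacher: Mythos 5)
Your proof is correct and follows essentially the same route as the paper: apply Lemma~\ref{lem:b} to identify $\co(A)$ with the compact set $A_b$, use Lemma~\ref{lem:compact} (legitimate since finite breadth gives local convexity by Lawson's result, hence local convactness) to obtain $\bigoplus A \in \co(A) = A_b$, and read off a finite $F \subset A$ with $\bigoplus F = \bigoplus A$. Your explicit attention to why $\bigoplus_S A$ exists in the ambient semilattice is exactly the point the paper delegates to Lemma~\ref{lem:compact}, so nothing is missing.
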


\begin{proof}
If $A$ is a nonempty compact subset, then $A$ has a supremum $a$ by Lemma~\ref{lem:compact}, and $a \in \overline{\co}(A) = \co(A)$ by Lemma~\ref{lem:b}, so that $a = \bigoplus F$ for some finite $F \subset A$. 
\end{proof}

A semilattice is \textit{distributive} if, for all $x, y, z \in S$ with $x \leqslant y \oplus z$, there exists some $y' \leqslant y$ and $z' \leqslant z$ such that $x = y' \oplus z'$. Also recall that a (distributive) \textit{lattice} is a (distributive) semilattice in which every nonempty finite subset has an infimum. 

\begin{theorem}
In a topological distributive lattice $S$ with finite breadth $b$ (still equip\-ped with the algebraic semilattice convexity), let $K$ be a compact convex subset of $S$. Then every $x \in K$ can be written as the convex combination of at most $b$ extreme points. 
\end{theorem}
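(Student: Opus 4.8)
The plan is to combine the Krein--Milman theorem for semilattices (Theorem~\ref{thm:km}) with Lemma~\ref{lem:b}. A topological semilattice with finite breadth is locally convex (Lawson's proposition above), so Theorem~\ref{thm:km} applies to $S$ and gives $K = \overline{\co}(\ex K)$. Moreover, having breadth $b$ amounts to the identity $\co(A) = A_b := \{a_1 \oplus \dots \oplus a_b : a_1, \dots, a_b \in A\}$, valid for every subset $A$ of $S$. Hence it is enough to prove that $\co(\ex K)$ is \emph{closed}: for then $K = \co(\ex K) = (\ex K)_b$, so every point of $K$ --- in particular the given $x$ --- is the join of at most $b$ extreme points.

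Since $K$ is compact Hausdorff, $\co(\ex K)$ will be closed as soon as $\ex K$ is closed: then $\ex K$ is compact, and Lemma~\ref{lem:b} makes $\co(\ex K) = (\ex K)_b$ compact, hence closed. So the whole statement reduces to the following assertion, and this is the step that genuinely uses distributivity: \emph{in a topological distributive lattice, the set of extreme points of a compact convex subset is closed.}

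To prove this one shows that the set $N$ of non-extreme points of the compact semilattice $K$ is open. A point $e \in N$ can be written $e = a \oplus c$ with $a, c \in K$ and $a \neq e \neq c$; applying Proposition~\ref{prop:sep2} inside the locally convact semilattice $K$ separates $e$ from $a$ and from $c$ by suprema-preserving continuous morphisms $K \to [0,1]$, which encode the two ``directions'' along which $e$ breaks up. The idea is then to transport this decomposition to every $w$ in a sufficiently small neighbourhood of $e$, writing $w = a_w \oplus c_w$ with $a_w, c_w \in K$ strictly below $w$, the two pieces being suitable suprema of the parts of $\downarrow\!\! w \cap K$ detected by these two morphisms; distributivity of the ambient lattice $S$ is precisely what forces $a_w \oplus c_w$ to recombine to $w$. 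The main obstacle --- and the reason the hypothesis bears on the ambient lattice $S$ and not on $K$ alone --- is that $K$ is only a join-subsemilattice of $S$, so ambient meets need not fall inside $K$ and the construction has to be arranged to stay in $K$; here it helps to reduce first, replacing $K$ by $K \cap \downarrow\!\! x$ so that $x$ becomes the top element, and to use the projection $p_K$ of the Remark following Lemma~\ref{lem:compact} to land back in $K$. Verifying that this splitting works uniformly on a neighbourhood of $e$, with distributivity carrying the combinatorial bookkeeping, is the delicate part; and distributivity cannot be dispensed with, since in its absence the extreme point set of a compact convex set need not be closed.
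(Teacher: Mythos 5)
Your reduction is sound up to its final step: finite breadth gives local convexity, Theorem~\ref{thm:km} gives $K=\overline{\co}(\ex K)$, and the identity $\co(A)=A_b$ together with Lemma~\ref{lem:b} would indeed finish the proof \emph{if} $\ex K$ were closed. But the claim on which everything rests --- that in a topological distributive lattice the extreme points of a compact convex subset form a closed set --- is false, and the paper warns about exactly this in the remark immediately following the theorem: even for $S$ distributive, $\ex K$ is closed if and only if the way-above relation on $K$ is additive (see \cite[Proposition~V-3.7]{Gierz03}), a condition that does not follow from finite breadth. A concrete counterexample inside $S=[0,1]^2$ (coordinatewise order, a compact topological distributive lattice of breadth $2$): let
$$
K=\bigl\{(\tfrac12,0),\,(0,\tfrac12),\,(\tfrac12,\tfrac12)\bigr\}\cup\bigl\{(\tfrac12+\tfrac1n,\tfrac12+\tfrac1n):n\geqslant 2\bigr\}.
$$
This is a compact subsemilattice of $S$. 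Each $x_n=(\tfrac12+\tfrac1n,\tfrac12+\tfrac1n)$ is coirreducible in $K$, since every element of $K$ strictly below it has first coordinate $<\tfrac12+\tfrac1n$, and the two minimal points are coirreducible as well; but their limit $(\tfrac12,\tfrac12)=(\tfrac12,0)\oplus(0,\tfrac12)$ is not. So $\ex K$ is not closed (although the conclusion of the theorem of course still holds for this $K$). Your sketched argument for closedness of $\ex K$ --- transporting a decomposition $e=a\oplus c$ to a neighbourhood of $e$ --- therefore cannot be completed; and independently of the counterexample, that part of your text is only a programme, not a proof: the ``delicate part'' is left entirely unverified.

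The paper takes a route that avoids $\ex K$ as a set altogether. Distributivity and finite breadth enter through Stralka's embedding theorem: the compact lattice generated by $K$ embeds into a product of $b$ compact chains, which yields $b$ continuous semilattice-morphisms $\varphi_1,\dots,\varphi_b$ on $K$ separating points. Then, exactly as in the proof of Lemma~\ref{lem:klee}, one produces for each $j$ an extreme point $u_{\varphi_j}\leqslant x$ with $\varphi_j(u_{\varphi_j})=\varphi_j(x)$, and concludes $x=u_{\varphi_1}\oplus\dots\oplus u_{\varphi_b}$ because the $\varphi_j$ separate points. If you wish to salvage your strategy, you would have to prove directly that $\co(\ex K)$ (rather than $\ex K$) is closed; but given Theorem~\ref{thm:km} that is essentially equivalent to the statement being proved, so nothing is gained.
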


\begin{proof}
Let $L$ be the lattice generated by $K$ in $S$. By Lemma~\ref{lem:b} (applied to $S$ and $L$ with the opposite order), $L$ is compact, so this is a compact locally convex topological semilattice. 
Using either \cite[Proposition~1.13.3]{vanDeVel93} or a combination of \cite[Theorem~III-2.15]{Gierz03} and  the proof of \cite[Proposition~III-2.13]{Gierz03}, one can assert that $L$ is also locally convex with respect to the order convexity. Thus, \cite[Theorem~3.1]{Stralka70}, due to Stralka, can be applied: $L$ as a topological lattice can be embedded (algebraically and topologically) in a product of $b$ compact (connected) chains $C = \prod_{j=1}^b C_j$. As a consequence, $K$ as a topological semilattice also embeds in $C$. For each $j = 1, \ldots, b$, we denote by $\varphi_j : K \rightarrow C_j$ the $j$th projection, which is a continuous semilattice-morphism. 

The remaining part of the proof can now mimic that of Lemma~\ref{lem:klee}, using the finite collection of maps $\{ \varphi_j : j = 1, \ldots, b \}$, which separates the points of $K$, instead of the whole collection of continuous semilattice-morphisms $\varphi : K \rightarrow [0, 1]$. This leads to the fact that, for all $x \in K$, one can write $x = u_1 \oplus \ldots \oplus u_b$ for some extreme points $u_1, \ldots, u_b$ of $K$. 
\end{proof}

\begin{problem}
Does the conclusion of this theorem still hold for $S$ a topological distributive \textit{semilattice} with finite breadth? 
\end{problem}

As a final remark, it should be emphasized that, in a locally convex topological semilattice $S$, the set $\ex K$ of extreme points of some compact convex subset $K$ is not necessarily closed. Actually, if $S$ is distributive, it is known that $\ex K$ is closed if and only if the way-above relation on $K$ is additive \cite[Proposition~V-3.7]{Gierz03}.

\subsection{Depth of a semilattice}

The \textit{depth} of a semilattice, defined as the supreme cardinality of a chain, is another important convex invariant, as highlighted by the following result\footnote{This result is left as an exercise in \cite[Exercise~II-1.23]{vanDeVel93}. As far as we know, no proof of it exists in the literature. }. Recall that the Helly number is the least integer $h$ such that each finite family of convex subsets meeting $h$ by $h$ has a nonempty intersection. 

\begin{proposition}\label{prop:depth1}
The Helly number of a semilattice equals its depth. 
\end{proposition}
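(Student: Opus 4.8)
\textbf{Proof proposal for Proposition~\ref{prop:depth1}.}

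The plan is to prove the two inequalities $h \leqslant d$ and $d \leqslant h$ separately, where $h$ denotes the Helly number and $d$ the depth of the semilattice $S$, both computed with respect to the algebraic convexity (subsemilattices). Throughout I will use that $\co(F)$ for a finite $F = \{x_1, \dots, x_n\}$ is the set of all nonempty subjoins $\bigoplus G$, $\emptyset \neq G \subset F$, and that the convex subsets meeting the Helly condition can, by the finite-generation property of the convexity recalled in Section~\ref{sec:convth}, be reduced to polytopes; so it suffices to analyse finite joins.

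First I would show $h \leqslant d$. Suppose $d$ is the depth, and let $C_1, \dots, C_{d+1}$ be convex subsets of $S$ meeting $d+1$ by $d+1$ (in fact $(d+1)$-wise, but pairwise-through-$d+1$ suffices once we set up an induction). For each index $i$, pick a point $p_i \in \bigcap_{j \neq i} C_j$. Consider the $d+1$ points $p_1, \dots, p_{d+1}$ and the element $a = p_1 \oplus \dots \oplus p_{d+1}$. By the definition of depth, among any $d+1$ elements of $S$ --- here I want to apply this to a cleverly chosen chain --- there is a repetition or collapse; more precisely, order the partial joins $q_k = p_1 \oplus \dots \oplus p_k$ to form a chain $q_1 \leqslant q_2 \leqslant \dots \leqslant q_{d+1}$ of length at most $d$, so two consecutive ones coincide, say $q_{k-1} = q_k$, which means $p_k \leqslant q_{k-1} = p_1 \oplus \dots \oplus p_{k-1}$. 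Then $a = p_1 \oplus \dots \oplus p_{k-1} \oplus p_{k+1} \oplus \dots \oplus p_{d+1}$ is a join of the $d$ points $p_j$, $j \neq k$, and each such $p_j$ lies in every $C_i$ with $i \neq j$; in particular all of $p_1, \dots, p_{k-1}, p_{k+1}, \dots, p_{d+1}$ lie in $C_k$. Hence $a \in C_k$. A parallel argument (replacing the omitted index) places $a$ in every $C_i$, so $\bigcap_i C_i \neq \emptyset$; this is the Helly property with constant $d$, giving $h \leqslant d$. I would need to be slightly careful that the argument genuinely produces membership of $a$ in \emph{all} the $C_i$ simultaneously and not just one of them --- the right bookkeeping is that $a$ is a join of the $d$-element family $\{p_j : j \neq k\}$, and for each fixed $i$ one shows $p_j \in C_i$ for all $j \neq i$, while the one possibly-bad index ($j = i$) can be dropped from the join using another collapse, or absorbed because $p_i$ itself may already appear. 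This juggling is the main obstacle and will require choosing the $p_i$ and the order of joining with care.

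Next, $d \leqslant h$. Here I would exhibit, for a chain $z_1 < z_2 < \dots < z_m$ of length $m$ in $S$, a family of $m$ convex subsets meeting $(m-1)$ by $(m-1)$ but with empty total intersection, forcing $h \geqslant m$, hence $h \geqslant d$. The natural candidates are principal filters and ideals built from the chain: for $i = 1, \dots, m$ set $C_i = {\uparrow} z_i$ if one wants... but $\uparrow z_i$ are nested, so their intersection is nonempty. Instead, the classical Helly-number-equals-something construction uses "hyperplane complements": take $C_i = \co\bigl(\{z_1, \dots, z_m\} \setminus \{z_i\}\bigr) = \{\bigoplus G : \emptyset \neq G \subset \{z_1,\dots,z_m\}\setminus\{z_i\}\}$. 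Because the $z$'s form a chain, $\bigoplus G = z_{\max G}$, so $C_i = \{ z_j : j \neq i\}$. Then any $m-1$ of the $C_i$ have a common point: omitting $C_i$, the point $z_i$ lies in $C_j$ for every $j \neq i$. But $\bigcap_{i=1}^m C_i = \emptyset$ since no single $z_j$ avoids being excluded. Thus these $m$ convex sets meet $(m-1)$-wise but not all together, so $h \geqslant m$; taking the supremum over chains gives $h \geqslant d$. One subtlety: these $C_i$ are finite, hence convex (they are polytopes over a chain), and I should note the argument works for arbitrary, possibly infinite, depth by the usual reduction of the Helly number to finite subfamilies.

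Combining the two inequalities yields $h = d$. I expect the forward direction ($h \leqslant d$) to be where the real work lies: turning "every chain has length $\leqslant d$" into the combinatorial statement that any $d+1$ points have a join realizable by $d$ of them, and then threading that through all the index-omission cases so that the candidate point lands in every $C_i$ at once. The reverse direction is a clean explicit construction over a chain.
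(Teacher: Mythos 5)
Your lower bound $h \geqslant d$ is exactly the paper's argument (for a chain $C$ of cardinality $n$, the sets $C \setminus \{c\}$ meet $n-1$ by $n-1$ with empty intersection), and your opening reduction from the given convex sets to the polytopes generated by the chosen points is also the paper's first move. The gap is in the upper bound $h \leqslant d$, and it is not the ``bookkeeping'' you hope it is: the candidate point $a = \bigoplus_j p_j$ (equivalently, the join of the $d$ surviving points after one chain-collapse) genuinely need not lie in all the $C_i$, under any ordering of the joins. Take $S$ to be the power set of $\{1,2\}$ under union (depth $3$), $C_1 = \{\emptyset\}$ and $C_2 = C_3 = C_4 = \{\emptyset,\{1,2\}\}$; these subsemilattices meet $3$ by $3$. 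The choice $p_1 = \{1,2\} \in C_2 \cap C_3 \cap C_4$ is legitimate, and forces $p_2 = p_3 = p_4 = \emptyset$, whence $a = \{1,2\} \notin C_1$; moreover $p_1$ can never be dropped from the join, since $p_1 \not\leqslant p_2 \oplus p_3 \oplus p_4$. The common point that the proposition promises is $\emptyset$, a single generator rather than a join of several of them. So the entire difficulty is concentrated in choosing the $p_i$ (or the common point) correctly, and your sketch supplies no mechanism for doing so.

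The paper circumvents this with a structural argument rather than a direct join computation: it passes to the finite subsemilattice $X$ generated by points $x_I \in \bigcap_{j \in I} K_j$ (one for each $d$-element index set $I$), observes that the algebraic convexity of a finite semilattice is a convex geometry of depth at most $d$, and invokes Jamison's theorem that in a finite convex geometry the Helly number equals the clique number, combined with the lemma that the cliques of a semilattice are exactly its maximal chains (so the clique number is the depth). The Helly property of $X$ then transfers to the original family because the subsemilattices $X_j \subset K_j$ generated by the relevant $x_I$ still meet $d$ by $d$ inside $X$. If you want to avoid citing Jamison, you would have to prove some version of his clique-number theorem for finite semilattices, which is precisely the combinatorial content your join-collapse argument is missing.
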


To prove this assertion, we shall need a result due to Jamison \cite[Theorem~7]{Jamison81}, which says that in a finite \textit{convex geometry} (see the definition in Paragraph~\ref{ssec:icg}), the Helly number equals the clique number, so first we give some definitions. Let $X$ be a convexity space. A subset $K$ of $X$ is \textit{free} if it is both convex and \textit{independent}, i.e.\ such that $K = \ex K$. A \textit{clique} is a maximal free subset, and the \textit{clique number} of $X$ is the supremum of the cardinalities of all cliques. 	

\begin{lemma}
The free subsets (resp.\ the cliques) of a semilattice coincide with its chains (resp. its maximal chains), and the clique number of a semilattice equals its depth. 
\end{lemma}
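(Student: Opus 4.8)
The plan is to prove the two claims of the lemma in turn, since the statement about the clique number follows immediately once we know that cliques coincide with maximal chains.

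First I would establish that a subset $K$ of a semilattice $S$ is free if and only if it is a chain. For this, recall from Section~\ref{sec:convsemilat} that $x \in K$ is an extreme point of $K$ precisely when $x$ is coirreducible in $K$, i.e.\ $x = \bigoplus F$ with $F \subset K$ finite nonempty implies $x \in F$. So $K$ is free exactly when $K$ is a subsemilattice and every element of $K$ is coirreducible in $K$. If $K$ is a chain, then it is a subsemilattice (the supremum of a finite subset is its greatest element, which lies in the subset), and every element is trivially coirreducible (if $x = \bigoplus F$ then $x$ is the greatest element of $F$, so $x \in F$); hence $K$ is free. Conversely, suppose $K$ is free but not a chain, so there are $x, y \in K$ with $x \not\leqslant y$ and $y \not\leqslant x$. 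Then $x \oplus y \in K$ since $K$ is convex, and $x \oplus y \notin \{x, y\}$ since $x \oplus y$ strictly dominates both; this contradicts coirreducibility of $x \oplus y$ in $K$. So $K$ must be a chain.

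Next, since cliques are by definition the maximal free subsets, the equivalence just proved shows that cliques are exactly the maximal chains of $S$, where ``maximal'' is understood with respect to inclusion among chains; these are the same because being a chain is a property of finite character (a subset is a chain iff every two-element subset is), so maximality among free subsets and maximality among chains agree. Finally, the clique number of $S$ is the supremum of the cardinalities of all cliques, hence of all maximal chains. Every chain is contained in a maximal chain (by Hausdorff's maximality principle, equivalently Zorn's lemma, as recalled in the discussion after Wallace's lemma), so the supremum of the cardinalities of maximal chains equals the supremum of the cardinalities of all chains, which is by definition the depth of $S$. This gives the clique number equals the depth.

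I do not expect any serious obstacle here; the only point requiring a little care is the distinction between ``maximal free subset'' and ``maximal chain'', which is resolved by the finite-character observation, and the passage from maximal chains to all chains in computing the supremal cardinality, which uses the maximal chain extension principle already invoked in the paper.
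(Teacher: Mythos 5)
Your proof is correct and follows essentially the same route as the paper: the key step in both is that for $x,y$ in a free subset $C$, convexity puts $x\oplus y$ in $C$ and extremality (coirreducibility) forces $x\oplus y\in\{x,y\}$, hence comparability; the paper leaves the converse and the clique-number computation as ``straightforward,'' which you simply spell out (the finite-character remark is superfluous once free subsets and chains are known to coincide as classes, but it is harmless).
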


\begin{proof}
Let $C$ be a free subset of a semilattice, let $x, y \in C$, and let us prove that $x$ and $y$ are comparable. Since $C$ is convex, $z := x \oplus y \in C$. But $C = \ex C$, so $z$ is a extreme point of $C$, hence $z \in \{ x, y\}$, i.e.\ $x \leqslant y$ or $y \leqslant x$. This proves that $C$ is a chain. The converse statement is straightforward, and the rest of the proof follows. 
\end{proof}

\begin{proof}[Proof of Proposition~\ref{prop:depth1}]
Write $d$ for the depth of $S$. Let $n$ be an integer $\leqslant d$, and let $C$ be a chain with cardinality $n$. Then the finite family $(K_c)_{c \in C}$ of convex subsets $K_c = C \setminus \{c\}$ meets $n-1$ by $n-1$ but is of empty intersection, so that $h > n-1$. This implies that $h \geqslant d$ (even if $d = \infty$). If $d = \infty$, we get $h = d$. 

Now assume that $d$ is finite.  
Let $(K_j)_{j\in J}$ be a finite family of convex subsets meeting $d$ by $d$. 
For every $I \subset J$ with cardinality $d$, let $x_I \in \bigcap_{j \in I} K_j$. Denote by $X$ the subsemilattice of $S$ generated by $\{x_I \}_{I \subset J, |I|=d}$. Note that the depth $d_X$ of $X$ is less than $d$. Moreover, $X$ is a finite set, and the algebraic convexity on $X$ is a \textit{convex geometry} (see the definition in Paragraph~\ref{ssec:icg}). Thus, \cite[Theorem~7]{Jamison81} applies, i.e.\ the clique number of $X$ equals its Helly number $h_X$. By the previous lemma, this rewrites to $h_X = d_X$, hence $h_X \leqslant d$. 
Now, let $X_j$ be the subsemilattice of $X$ generated by $\{ x_I \}_{I \subset J, |I|=d, j \in I}$. 
Then $x_I \in \bigcap_{j \in I} X_j$ for all $I$, so that $(X_j)_{j\in J}$ is a finite family of convex subsets of $X$ meeting $d$ by $d$. Since $h_X \leqslant d$, we have $\bigcap_{j \in J} X_j \neq \emptyset$, by definition of the Helly number. Morever, $X_j \subset K_j$, so we get $\bigcap_{j \in J} K_j \neq \emptyset$. This shows that $h \leqslant d$. 
\end{proof}

The next result connects the depth with the extreme points of convex subsets and can be seen as a corollary of Lemma~\ref{lem:klee}. 

\begin{proposition}\label{prop:depth2}
Let $S$ be a locally convex topological distributive semilattice, and $K$ be a locally compact closed convex subset of $S$. Assume that $K$ has finite depth $d$. Then $K$ is finite and has exactly $d$ extreme points. 
\end{proposition}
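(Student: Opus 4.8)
The strategy is to apply Lemma~\ref{lem:klee} to express each point of $K$ as a supremum of extreme points lying below it, and then to exploit the finite depth $d$ together with distributivity to control both the cardinality of $\ex K$ and the cardinality of $K$ itself. First I would observe that every chain in $K$ has at most $d$ elements; in particular, for any $x \in K$, the set $\ex K \cap \downarrow\!\! x$ contains no infinite chain. The key point is that a subset of a distributive semilattice consisting of extreme (coirreducible) points, all below a common element and with no long chains, must itself be finite, and its join is a finite join. I would make this precise as follows: if $E$ is any antichain-free set of coirreducibles below $x$, use distributivity to show that the directed set $\co(E)$ is reached by joining finitely many members of $E$; combined with depth $\leqslant d$ this forces $|E| \leqslant$ something controlled by $d$.

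Next, for the count of extreme points, I would argue that since $K$ contains no line (a finite-depth semilattice has no infinite chains at all, hence trivially no line), Lemma~\ref{lem:klee} gives $x = \bigoplus_K(\ex K \cap \downarrow\!\! x)$ for each $x \in K$. Taking $x$ to range over $K$ and using that $K$ is convex, one sees $K = \co(\ex K)$, so $K$ is generated by $\ex K$. Then I would show $\ex K$ is finite: by the depth bound, any maximal chain of coirreducibles has length $\leqslant d$, and distributivity prevents coirreducibles from ``accumulating'' in antichains below a fixed element — more carefully, I would show that the number of extreme points below any $x$ is at most $d$ (each such extreme point, being coirreducible and $\leqslant x$, must be $\leqslant$ one of the at most $d$ ``levels'' of a maximal chain decomposition), and hence $\ex K$, being the union over a finite generating process, is finite with at most $d$ elements. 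Finiteness of $K$ then follows because $K = \co(\ex K)$ is the subsemilattice generated by a finite set.

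For the exact count ``exactly $d$'' rather than ``at most $d$'', I would produce a chain of length $d$ inside $\ex K$: take a maximal chain $c_1 < c_2 < \cdots < c_d$ in $K$ (which exists since the depth is attained, $K$ being finite); each $c_i$ must be coirreducible, since if $c_i = \bigoplus F$ with $F \subset K$ finite and $c_i \notin F$, distributivity would let us refine and contradict maximality of the chain (this is essentially the argument that free subsets of a semilattice are chains, run in reverse). Hence all $d$ elements of the maximal chain are extreme, giving $|\ex K| \geqslant d$, and combined with the upper bound $|\ex K| \leqslant d$ we conclude $|\ex K| = d$.

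\textbf{Main obstacle.} The delicate step is the upper bound $|\ex K| \leqslant d$: bounding the number of coirreducibles below a given element purely in terms of the depth requires distributivity in an essential way (without it, a finite-depth semilattice can have many coirreducibles — think of the ``diamond'' $M_3$, depth $3$ but three atoms). I would handle this by showing that in a distributive semilattice, if $e_1, \ldots, e_k$ are distinct coirreducibles all $\leqslant x$, then they are ``independent enough'' that $x = e_1 \oplus \cdots \oplus e_k$ with the $e_i$ forming an antichain, and an antichain of coirreducibles in a distributive semilattice of depth $d$ below a single element has at most $d$ members — which I would reduce to the breadth/Carathéodory and Helly considerations, or more directly to the fact (already used in Proposition~\ref{prop:depth1}) that in this setting the relevant convexity is a convex geometry, where the interplay of clique number and Helly number is rigid. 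Getting this clean without re-deriving too much structure theory is where the real work lies.
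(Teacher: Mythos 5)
Your overall architecture (finite depth implies no line, invoke Lemma~\ref{lem:klee} to order-generate $K$ by $\ex K$, then count extreme points against $d$) matches the paper's, but both halves of the count contain genuine gaps. The lower bound as you state it is false: the elements of a maximal chain need not be coirreducible. In the distributive semilattice $K = \{0, a, b, a \oplus b\}$ with $a$ and $b$ incomparable, the maximal chain $0 < a < a \oplus b$ has top $a \oplus b$, which is the join of $a$ and $b$ and hence not extreme; your claim that maximality of the chain forces coirreducibility of each $c_i$ fails exactly here (the proposition still holds, since $\ex K = \{0, a, b\}$ has $3 = d$ elements --- just not the three elements of the chain). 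The upper bound fares no better: knowing that each extreme point lies below some ``level'' of a maximal chain does not bound their number; your intermediate assertion that distinct coirreducibles below $x$ form an antichain whose join is $x$ is also false ($0$ and $a$ above are comparable coirreducibles below $a \oplus b$); and the concluding appeal to Helly/clique-number rigidity is not an argument.

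The paper closes both gaps at once with a single device borrowed from Blyth: fix a maximal chain $c_1 < \cdots < c_d$ in $K$ and define $\theta : \ex K \rightarrow C$ by $\theta(p) = \min\{c \in C : c \geqslant p\}$. Surjectivity is your lower bound done correctly: for $k \geqslant 2$, order-generation (Lemma~\ref{lem:klee}) yields an extreme point $p \leqslant c_k$ with $p \not\leqslant c_{k-1}$, whence $\theta(p) = c_k$ --- this produces an extreme point \emph{mapping to} $c_k$, not $c_k$ itself. Injectivity is your upper bound done correctly: if $\theta(p) = \theta(q) = c_k$ with $k \geqslant 2$, maximality of the chain forces $c_{k-1} \oplus p = c_k = c_{k-1} \oplus q$ (otherwise $c_{k-1} < c_{k-1} \oplus p < c_k$ would lengthen $C$), so $p \leqslant c_{k-1} \oplus q$, and distributivity plus coirreducibility of $p$ give $p \leqslant c_{k-1}$ (excluded, since $\theta(p) = c_k$) or $p \leqslant q$; by symmetry $p = q$. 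This injectivity step is where distributivity actually enters, and it is precisely the step your sketch acknowledges as the main obstacle without resolving. To salvage your plan you would need to replace both of your counting arguments with something equivalent to this bijection; the rest (no line from finite depth, finiteness of $K$ once $\ex K$ is finite and order-generating) is fine and agrees with the paper.
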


\begin{proof}
We follow the proof given by Blyth \cite[Theorem~5.3]{Blyth05} for finite distributive lattices. Let $C$ be a chain of maximal length $d$ in $K$. For convenience, we write $c_1 < \ldots < c_d$ for elements of $C$. Let $\theta : \ex K \rightarrow C$ such that $\theta(p) = \min\{ c\in C : c \geqslant p \}$. 
Note that $c_1$ is necessarily the least element of $K$, hence is in $\ex K$, and $\theta(c_1) = c_1$. 
If $c_k \in C \setminus \{ c_1 \}$, there exists some $p \in \ex K$ such that $p \leqslant c_k$ and $p \not\leqslant c_{k-1}$, since $\ex K$ order-generates $K$ by Lemma~\ref{lem:klee}. This implies $\theta(p) = c_k$. We have shown that $\theta$ is surjective. 

Let us prove that $\theta$ is injective. Assume that $\theta(p) = \theta(q) = c_k \in C$ for some $p, q \in \ex K$. If $c_k = c_1$, then $p = q = c_k$, so suppose that $c_k \neq c_1$.  
Then $c_{k-1} \oplus p \leqslant c_k$ is clear, and one also has $c_{k-1} \oplus p \geqslant c_k$, otherwise $c_{k-1} < c_{k-1} \oplus p < c_k$ which is impossible because of the maximality of $C$. We get $c_{k-1} \oplus p = c_k$, and symmetrically $c_k = c_{k-1} \oplus q$. 
Thus, $p \leqslant c_{k-1} \oplus p = c_{k-1} \oplus q$. The distributivity of $S$ and the fact that $p$ is an extreme point of $K$ imply $p \leqslant c_{k-1}$ (which would contradict $\theta(p) = c_k$) or $p \leqslant q$. Similarly, $p \geqslant q$, so $p=q$, and $\theta$ is injective, hence bijective. This proves that the cardinality of $\ex K$ equals $d$. 

Since $K$ has finite depth, every (upper-bounded) chain in $K$ is finite hence compact, so $K$ contains no line. 
By Lemma~\ref{lem:klee}, the finite subset $\ex K$ order-generates $K$, so that $K$ is finite. 
\end{proof}

\section{Convex geometries on semilattices and lattices}\label{sec:cg}

\subsection{Introduction}\label{ssec:icg}

Some convexities may not satisfy a Krein--Milman type theorem and, for some of them, even polytopes may not coincide with the convex hull of their extreme points. This last property actually characterizes convexities that are convex geometries, whose usual definition follows. 
A convexity space $X$ is a \textit{convex geometry} (or an \textit{antimatroid}) if, given a convex subset $K$, and two unequal points $x$ and $y$, neither in $K$, then $y \in \co(K \cup \{x\})$ implies $x \notin \co(K \cup \{y\})$. This amounts to say that the relation $\leqslant_K$ defined on $X \setminus K$ by $x \leqslant_K y \Leftrightarrow y \in \co(K \cup \{x\})$ is a partial order.  
The convexities previously introduced, namely the order (resp.\ lower, upper) convexity for posets, and the algebraic convexity for semilattices, are indeed convex geometries (see \cite[Exercise~I-2.24]{vanDeVel93}). 
In this section, we investigate some other convexities on semilattices and lattices that are not convex geometries in general. 

Let $X$ be a convexity space and $x \in X$. A \textit{copoint} at $x$ is a convex set $C \subset X$ maximal with the property $x \notin C$, in which case $x$ is an \textit{attaching point} of $C$. 

\begin{lemma}\label{lem:cop}
Let $X$ be a convexity space. If $C$ is a convex subset and $x \notin C$, there is some copoint at $x$ containing $C$. 
\end{lemma}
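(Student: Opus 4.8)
The plan is to invoke Zorn's lemma on the family of convex sets separating $x$ from $C$. First I would define
\[
\mathcal{F} = \{ D \subset X : D \text{ convex}, C \subset D, x \notin D \},
\]
ordered by inclusion. This family is nonempty since $C$ itself belongs to it. To apply Zorn's lemma I need every chain (totally ordered subfamily) of $\mathcal{F}$ to have an upper bound in $\mathcal{F}$.

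So let $\mathcal{D} \subset \mathcal{F}$ be a chain, which I may assume nonempty (otherwise $C$ is an upper bound). Its union $\bigcup \mathcal{D}$ clearly contains $C$ and does not contain $x$, since none of the members of $\mathcal{D}$ contains $x$. The only point requiring an argument is that $\bigcup \mathcal{D}$ is convex. Here I would use the third convexity axiom: a convexity is closed under directed unions, and a chain under inclusion is in particular a directed subfamily of the convexity $\mathrsfs{C}$ (given $D_1, D_2 \in \mathcal{D}$, one of them contains the other by totality, so it serves as the required $D \in \mathcal{D}$ containing both). Hence $\bigcup \mathcal{D} \in \mathrsfs{C}$, so $\bigcup \mathcal{D} \in \mathcal{F}$ and is an upper bound of the chain.

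By Zorn's lemma, $\mathcal{F}$ has a maximal element $M$. By construction $M$ is convex, contains $C$, and does not contain $x$; and it is maximal among convex sets avoiding $x$ (any convex $M' \supsetneq M$ with $x \notin M'$ would lie in $\mathcal{F}$ and contradict maximality of $M$ — note this maximality is among \emph{all} convex sets avoiding $x$, not merely those containing $C$, since a convex set avoiding $x$ that properly contains $M$ automatically contains $C$). Thus $M$ is a copoint at $x$ containing $C$, which is exactly the claim.

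I do not anticipate a genuine obstacle here: the statement is a routine maximality argument, and the only subtlety is making sure that the ``directed unions'' axiom of a convexity (rather than the stronger ``arbitrary unions'', which fails) is what licenses the union step — a chain is directed, so this is fine. One could alternatively phrase the whole argument via Hausdorff's maximality principle applied to chains inside $\mathcal{F}$, but Zorn's lemma is the cleanest route.
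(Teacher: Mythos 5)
Your proof is correct and takes exactly the route the paper intends: the paper's entire proof is the one-line remark that the lemma is an easy consequence of Zorn's lemma, and your write-up supplies precisely the missing details, correctly identifying the directed-unions axiom as what makes the union of a chain convex. The closing observation that maximality in $\mathcal{F}$ implies maximality among \emph{all} convex sets avoiding $x$ is the right final check.
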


\begin{proof}
This is an easy consequence of Zorn's lemma. 
\end{proof}

The next important theorem, due to Jamison \cite{Jamison80}, and to Edelman and Jamison \cite{Edelman85} for the case where the set $X$ is finite, lists several equivalent conditions for a convexity to be a convex geometry. For the sake of completeness, we shall give a proof of this result. 

\begin{theorem}[Jamison--Edelman]\label{thm:cg}
Let $X$ be a convexity space. Then the following are equivalent: 
\begin{enumerate}
	\item\label{cg1} $X$ is a convex geometry, 
	\item\label{cg2} each polytope is the convex hull of its extreme points, 
	\item\label{cg3} for each copoint $C$ at $x$, the set $C \cup \{x\}$ is convex,  
	\item\label{cg4} each copoint $C$ has a unique attaching point.  
\end{enumerate}
\end{theorem}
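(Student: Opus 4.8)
The plan is to prove the cycle of implications $(\ref{cg1}) \Rightarrow (\ref{cg2}) \Rightarrow (\ref{cg3}) \Rightarrow (\ref{cg4}) \Rightarrow (\ref{cg1})$, which is the most economical route since each single step is short once set up correctly. Throughout I will use freely the two basic facts from Section~\ref{sec:convth}: first, that a set $C$ is convex iff $\co(F) \subset C$ for every finite $F \subset C$ (so polytopes generate the convexity); second, that copoints exist above any convex set avoiding a given point (Lemma~\ref{lem:cop}). I would also record at the start the elementary observation that an \emph{attaching point} of a copoint $C$ is precisely a point $x \notin C$ with $\co(C \cup \{x\}) = X$... no, more carefully: $x$ attaches to the copoint $C$ iff $x \notin C$ and $C$ is maximal convex avoiding $x$, equivalently iff for every $y \notin C$ with $y \neq x$ one has $x \in \co(C \cup \{y\})$ (because $\co(C\cup\{y\})$ is a convex set strictly containing $C$, hence by maximality must contain $x$).

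For $(\ref{cg1}) \Rightarrow (\ref{cg2})$: let $P = \co(F)$ be a polytope; I may assume $F$ is finite and minimal, so $F \subset \ex P$, and it suffices to show $\co(\ex P) = P$, for which it is enough to show $F \subset \co(\ex P)$, hence really just $\ex P \subset \text{(what we need)}$ — actually the cleanest argument: take a point $x \in P \setminus \ex P$; then $x \in \co(P \setminus \{x\})$, so $x \in \co(G)$ for some finite $G \subset P \setminus \{x\}$; iterating and using finiteness one reduces every point of $P$ to the convex hull of extreme points. The convex geometry axiom is what guarantees this descent terminates: the anti-exchange relation $\leqslant_K$ being a partial order (applied with $K = \emptyset$, or $K$ a suitable convex subset) prevents cycles, so a minimal generating set consists of extreme points. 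I would phrase this via: if $P = \co(F)$ with $|F|$ minimal, then each $f \in F$ is extreme in $P$ (otherwise $f \in \co(F \setminus \{f\} \cup \ldots)$ and using the anti-exchange property one shrinks $F$), giving $P = \co(F) \subset \co(\ex P) \subset P$.

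For $(\ref{cg2}) \Rightarrow (\ref{cg3})$: let $C$ be a copoint at $x$; I want $C \cup \{x\}$ convex, i.e.\ $\co(F) \subset C \cup \{x\}$ for each finite $F \subset C \cup \{x\}$. If $x \notin F$ this is immediate from convexity of $C$. So suppose $x \in F$, write $F = F_0 \cup \{x\}$ with $F_0 \subset C$, and suppose for contradiction there is $z \in \co(F) \setminus (C \cup \{x\})$. Consider the polytope $P = \co(F_0 \cup \{x, z\})$; by $(\ref{cg2})$, $P = \co(\ex P)$, and $\ex P \subset F_0 \cup \{x, z\} \subset C \cup \{x, z\}$. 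Now $C \cap P$ is a convex subset of $P$ not containing $x$ (since $x$ attaches to $C$), so by maximality of $C$ among convex sets avoiding $x$, adding anything outside $C$ breaks it; the point is to derive that $z$ must lie in $C$ after all, contradicting the choice of $z$. I will unwind this by showing $z$ is forced into $\co(C \cup \{x\}) \cap (\text{something})$ and using that $z \neq x$. This is the step I expect to be the main obstacle — getting the combinatorics of extreme points of the auxiliary polytope to pin down $z$ without circularity requires care, and I may instead argue contrapositively: if $C \cup \{x\}$ is not convex, there is a minimal "witnessing" polytope whose extreme points lie in $C \cup \{x\}$ but which is not contained in $C \cup \{x\}$, and a point in it outside $C \cup \{x\}$ together with $x$ violates the defining maximality of the copoint.

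For $(\ref{cg3}) \Rightarrow (\ref{cg4})$: suppose $C$ has two distinct attaching points $x$ and $y$. By $(\ref{cg3})$, $C \cup \{x\}$ is convex; it strictly contains $C$, so by maximality of $C$ among convex sets missing $y$ we must have $y \in C \cup \{x\}$, hence $y = x$, contradiction. Finally $(\ref{cg4}) \Rightarrow (\ref{cg1})$: suppose $X$ is not a convex geometry, so there is a convex $K$ and distinct $x, y \notin K$ with $y \in \co(K \cup \{x\})$ and $x \in \co(K \cup \{y\})$. By Lemma~\ref{lem:cop} pick a copoint $C \supset K$ at $x$. Then $y \notin C$: for if $y \in C$ then $\co(K \cup \{x\}) \ni y$ gives... rather, note $\co(C \cup \{x\})$ contains $\co(K\cup\{x\}) \ni y$, that's not yet a contradiction; instead observe that if $y \in C$ then since $C$ is convex and $x \in \co(K \cup \{y\}) \subset \co(C)$ we'd get $x \in C$, contradiction; so $y \notin C$. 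Then $y$ is an attaching point of $C$ (because $y \in \co(K \cup \{x\}) \subset \co(C \cup \{x\})$ so the convex set $\co(C \cup \{x\})$ properly contains $C$ and hence equals... no — I argue: any convex $D$ with $C \subsetneq D$ and $y \notin D$ would contain $x$ by maximality of $C$... hmm). I would instead directly exhibit two attaching points: $x$ attaches to $C$ by construction, and $y \notin C$ with $\co(C \cup \{y\}) \ni x$ shows $C$ is not maximal-avoiding-$x$ unless... Let me just say: symmetrically take a copoint $C' \supset K$ at $y$; one shows $x$ attaches to some common refinement, yielding a copoint with two attaching points, contradicting $(\ref{cg4})$. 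The cleanest version: enlarge $K$ to a copoint $C$ at $x$ with $y \notin C$ (possible as above); then for \emph{any} $z \notin C$ with $z \neq x$, $\co(C \cup \{z\}) \supsetneq C$ so by maximality contains $x$, i.e.\ $x$ is the attaching point — but taking $z = y$, the hypothesis $x \in \co(K \cup \{y\}) \subset \co(C \cup \{y\})$ confirms this; now swap roles using a copoint at $y$ containing $C \cap (\text{stuff})$... I'll settle the precise routing in writing, but the mechanism is that the failure of anti-exchange produces a copoint attaching to two points.
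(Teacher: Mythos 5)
Your cycle $(1)\Rightarrow(2)\Rightarrow(3)\Rightarrow(4)\Rightarrow(1)$ differs from the paper's $(1)\Rightarrow(3)\Rightarrow(4)\Rightarrow(2)\Rightarrow(1)$, and the reordering is exactly what creates your two gaps: both implications touching condition (2) are left incomplete. The step $(2)\Rightarrow(3)$, which you flag as the main obstacle, does fail as you have set it up: introducing the polytope $\co(F_0\cup\{x,z\})$ and noting that its extreme points lie in $F_0\cup\{x,z\}$ only returns you to your starting data. The missing idea is \emph{double generation}. From $z\in\co(C\cup\{x\})\setminus(C\cup\{x\})$ and the maximality of the copoint $C$ at $x$ you get $x\in\co(C\cup\{z\})$; choose a finite $F_0\subset C$ witnessing both memberships, so that $L:=\co(F_0\cup\{x\})=\co(F_0\cup\{z\})$ is a single polytope with two generating sets whose intersection is $F_0$. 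Since $\ex L$ is contained in every generating set of $L$, condition (2) gives $x\in L=\co(\ex L)\subset\co(F_0)\subset C$, a contradiction. This is precisely the paper's proof of $(2)\Rightarrow(1)$; in other words $(2)\Rightarrow(3)$ should be routed as $(2)\Rightarrow(1)\Rightarrow(3)$, and attempting it directly leads to the circularity you ran into.

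The step $(1)\Rightarrow(2)$ also has a real gap: ``using the anti-exchange property one shrinks $F$'' is not a one-line move, because a point of $\co(K\cup H)$ need not lie in $\co(K\cup\{h\})$ for any single $h\in H$. It can be repaired by induction on the witnessing set: with $K=\co(F\setminus\{f\})$ and $f\in\co(G)$ for some finite $G\subset P\setminus\{f\}$, put $H=G\setminus K$; for $h\in H$ either $f\in\co(K\cup(H\setminus\{h\}))$ and you recurse, or antisymmetry of $\leqslant_{K'}$ with $K'=\co(K\cup(H\setminus\{h\}))$ forces $f=h$, which is absurd since $f\notin G$. So the descent terminates with $f\in K$, contradicting minimality of $F$ --- but none of this is in your sketch, and the paper sidesteps it entirely by deriving (2) from (4) through copoints. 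By contrast, your $(3)\Rightarrow(4)$ agrees with the paper, and your $(4)\Rightarrow(1)$ is sound and is in fact a shortcut the paper does not take: you already observed that any convex $D\supsetneq C$ with $y\notin D$ must contain $x$ by maximality of the copoint $C$ at $x$, and then $y\in\co(K\cup\{x\})\subset D$ finishes it, so $y$ is a second attaching point of $C$; you only needed to write that last line instead of retreating to ``I'll settle the precise routing in writing.''
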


\begin{proof}
(\ref{cg1}) $\Rightarrow$ (\ref{cg3}). Assume that $X$ is a convex geometry, and let $C$ be a copoint at $x$. Assume that $C \cup \{x\}$ is not convex, i.e.\ there is some $y \in \co(C \cup \{x\})$, $y \notin C \cup \{x \}$. Then $\co(C \cup \{y\})$ is a convex set avoiding $x$ and strictly greater than $C$, a contradiction. 

(\ref{cg3}) $\Rightarrow$ (\ref{cg4}). Let $C$ be a copoint at $x$, and assume that it has another attaching point $y \neq x$. Then, by $(\ref{cg3})$, $C \cup \{y\}$ is a convex set avoiding $x$ and strictly greater than $C$, a contradiction. 

(\ref{cg4}) $\Rightarrow$ (\ref{cg2}). Let $K$ be a polytope, and let $F$ be a minimal finite subset such that $K = \co(F)$. Consider some $x \in F$ that is not an extreme point of $K$. By minimality of $F$, $x \notin \co(F \setminus \{x\})$, so there is some copoint $C$ at $x$ containing $\co(F \setminus \{x\})$. Since $x$ is not an extreme point, $C$ is strictly contained in $K \setminus \{x\}$, so there is some $y \neq x$, $y \notin C$. Let $D$ be a copoint at $y$ containing $C$. If $x \notin D$, then $C = D$ by maximality of $C$, but then, by (\ref{cg4}), $x = y$, a contradiction. Hence, $x \in D$, so that $D = K$, which contradicts $y \notin D$. So we have shown that $F \subset \ex K$, i.e.\ $K = \co(\ex K)$. 

(\ref{cg2}) $\Rightarrow$ (\ref{cg1}). Assume that, for some $x \neq y$ and some convex subset $K$, $x \in \co(K \cup \{y \}) \setminus K$ and $y \in \co(K \cup \{x\}) \setminus K$. It is easy to see that there exists some finite subset $F \subset K$ such that $x \in \co(F \cup \{y\})$ and $y \in \co(F \cup \{x\})$. Then the polytope $L = \co(F \cup \{x\}) = \co(F \cup \{y\})$ is the convex hull of its extreme points $\ex L$, and we deduce $\ex L \subset F \cup \{x\}$ and $\ex L \subset F \cup \{y\}$, hence $\ex L \subset F$, so that $L = \co(\ex L) \subset \co(F) \subset K$. This contradicts $x \notin K$. 
\end{proof}

For one more equivalent condition using the concept of \textit{meet-distributive lattice}, see Edelman \cite[Theorem~3.3]{Edelman80}, Birkhoff and Bennett \cite{Birkhoff85}, and Monjardet \cite{Monjardet85}. 

In the following paragraphs, we say that a topological convexity space \textit{satisfies the Krein--Milman property} if every compact convex subset is the closed convex hull of its extreme points. 

\subsection{The ideal convexity of a semilattice}

Recall from Section~\ref{sec:convsemilat} that the \textit{ideal convexity} of a semilattice consists of its lower subsemilattices. An element of a convex subset $K$ is then an extreme point of $K$ if and only if it is at the same time maximal and coprime in $K$ ($x$ is \textit{coprime} if, for every nonempty finite subset $F$ with $x \leqslant \bigoplus F$, $x \leqslant f$ for some $f \in F$). We call \textit{max-coprime} an element that is both maximal and coprime. 

\begin{proposition}\label{prop:fc}
A semilattice with the ideal convexity is a convex geometry if and only if it is a chain. 
In this case, when endowed with a compatible topology, it satisfies the Krein--Milman property. 
\end{proposition}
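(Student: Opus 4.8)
The plan is to prove the two directions separately, using Theorem~\ref{thm:cg} (equivalence of convex geometry with ``each polytope is the convex hull of its extreme points'') as the most convenient characterization of convex geometry.

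First I would treat the easy direction: if $S$ is a chain, then the ideal convexity coincides with the order convexity (a lower subsemilattice of a chain is just a lower interval, since in a chain every subset is a subsemilattice), and we already know from Paragraph~\ref{ssec:icg} that the order convexity on a poset is a convex geometry. Alternatively, one checks directly that for $x \not\leqslant y$ in a chain, $y \in \co(K \cup \{x\})$ forces $x \leqslant y$ (absurd) or a direct contradiction, so the defining asymmetry condition holds. For the Krein--Milman property in the chain case: a compact convex subset $K$ is a compact lower interval, its unique extreme point is its maximum (which exists by compactness via Wallace's lemma applied to the reverse order, or simply because $K$ is a compact chain), and $K = \downarrow\!\!(\max K) = \overline{\co}(\{\max K\})$ trivially — in fact one should note the closure is superfluous here, as $\co(\{\max K\}) = \downarrow\!\!(\max K) \cap K = K$ already, but stating it with closure is harmless.

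For the hard direction — that a non-chain cannot be a convex geometry — I would argue by contradiction: suppose $S$ is not a chain, so there exist incomparable elements $a, b \in S$. Set $m = a \oplus b$. The key is to exhibit a polytope failing condition (\ref{cg2}) of Theorem~\ref{thm:cg}. Consider $F = \{a, b, m\}$ (or just $F = \{a,b\}$) and compute $\co(F)$ in the ideal convexity: it is the smallest lower subsemilattice containing $a$ and $b$, which is $\downarrow\!\! a \cup \downarrow\!\! b \cup \downarrow\!\! m$, equivalently $\downarrow\!\! a \cup \downarrow\!\! b$ together with everything below $m$. The extreme points of this polytope are its max-coprime elements; one checks that $m$ is \emph{not} an extreme point of $\co(F)$ because $m = a \oplus b$ with $a, b$ strictly below $m$ (it is maximal but not coprime), whereas $a$ and $b$ \emph{are} maximal in $\co(F)$ — but here lies the subtlety: $\co(F)$ may contain elements incomparable to and not below $a$ or $b$ arising from lower sets, and we must ensure $a,b$ are genuinely maximal there, which they are since anything in $\co(F)$ lies below $a$, below $b$, or below $m = a\oplus b$, and nothing strictly above $a$ other than things above $m$... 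This requires care. A cleaner route: take $K = \downarrow\!\! a$ (a lower subsemilattice, hence convex), and note $b \notin K$, $m \notin K$. One shows $m \in \co(K \cup \{b\})$ since $m = a \oplus b$ with $a \in K$; and $b \in \co(K \cup \{m\})$ since $\co(K \cup \{m\}) \supseteq \downarrow\!\! m \ni b$. If also $b \ne m$ (true, as $b < m$ strictly because $a \not\leqslant b$), this directly violates the convex-geometry axiom with the pair $(b, m)$ and convex set $K$: we have $b \ne m$, both outside $K$, $m \in \co(K \cup \{b\})$, and $b \in \co(K \cup \{m\})$.

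The main obstacle I anticipate is getting the convex-hull computations in the ideal convexity exactly right — in particular verifying that $\co(K \cup \{b\}) = \,\downarrow\!\!(K \cup \{b\})$ closed under $\oplus$, i.e.\ that the smallest lower subsemilattice containing a lower subsemilattice $K$ and a point $b$ is precisely $\downarrow(\{x \oplus b : x \in K\} \cup K \cup \{b\}) = \downarrow\!\! b \,\cup\, K \,\cup\, \{x \oplus b : x \in K\}$, and confirming $m = a \oplus b$ sits in it while $b$ sits in $\co(K \cup \{m\}) = \,\downarrow\!\! m$. These are routine once one writes down that a lower subsemilattice is generated from a set $A$ by first closing under $\oplus$ and then taking the down-set (the two operations, done in that order, already give a lower subsemilattice). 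I would present the contradiction via $K = \downarrow\!\! a$ as the cleanest formulation, then invoke Theorem~\ref{thm:cg} only implicitly through the definition of convex geometry given in Paragraph~\ref{ssec:icg}.
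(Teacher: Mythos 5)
Your proposal is correct in substance, but the ``only if'' direction takes a genuinely different route from the paper. Where you exhibit a direct violation of the anti-exchange axiom --- taking incomparable $a, b$, setting $m = a \oplus b$ and $K = \downarrow\!\! a$, and checking that $b \neq m$, that neither lies in $K$, that $m \in \co(K \cup \{b\})$ (it is the join of $a \in K$ and $b$), and that $b \in \co(K \cup \{m\}) \supset \downarrow\!\! m$ --- the paper argues in the forward direction: given $x \not\leqslant y$, it takes a copoint $C$ at $x$ containing $\downarrow\!\! y$ (Lemma~\ref{lem:cop}), invokes condition~(3) of Theorem~\ref{thm:cg} to conclude that $C \cup \{x\}$ is convex, hence $y \oplus x \in C \cup \{x\}$, and rules out $y \oplus x \in C$ to force $y < x$. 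Your version is more elementary: it needs neither the copoint machinery (hence no appeal to Zorn's lemma) nor Theorem~\ref{thm:cg}, only the definition of a convex geometry. What the paper's template buys is reusability: essentially the same copoint argument is repeated for the order-algebraic convexities of semilattices and lattices in the subsequent propositions.

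Two small repairs are needed, neither of which affects your contradiction. First, on a chain the ideal convexity coincides with the \emph{lower} convexity (down-sets), not the order convexity: on a chain an interval $[x,y]$ is order-convex but need not be a down-set. This is what the paper asserts, and it is what your Krein--Milman argument implicitly uses, since you correctly identify the unique extreme point of a convex $K$ as its maximum (under the order convexity the minimum would also be extreme). Second, your closed form for $\co(K \cup \{b\})$ is not quite right: the ideal-convex hull of a set $A$ is the down-set of the subsemilattice generated by $A$, so $\co(K \cup \{b\}) = \downarrow\!\!\bigl(K \cup \{b\} \cup \{x \oplus b : x \in K\}\bigr)$, and this need not reduce to $\downarrow\!\! b \cup K \cup \{x \oplus b : x \in K\}$, because an element below some $x \oplus b$ need not lie below $x$, below $b$, or equal such a join. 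Your argument only uses that the hull contains $a \oplus b$ and is down-closed, so it survives intact.
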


\begin{proof}
For a chain, the ideal convexity coincides with the lower convexity, thus is a convex geometry. The Krein--Milman property is then the terms of Theorem~\ref{thm:kmp}. 

Now assume that the ideal convexity of some semilattice $S$ is a convex geometry, and let us show that $S$ is a chain. So let $x, y \in S$ with $x \not\leqslant y$. Then $x \notin \downarrow\!\! y$, which is a convex subset. Thus, by Lemma~\ref{lem:cop}, there is some copoint $C$ at $x$ containing $\downarrow\!\! y$. By Theorem~\ref{thm:cg}, $C \cup \{ x \}$ is convex, and $y \in C$, so we have $y \oplus x \in C \cup \{ x \}$, i.e.\ $y \oplus x  \in C$ or $y < x$. The former case has to be rejected, otherwise $x \in \downarrow\!\!(y\oplus x) \subset C$. Hence, $y < x$, which concludes the proof. 
\end{proof}

We seize the opportunity to mention here that the ideal convexity was considered by Martinez \cite{Martinez72}, whose main result \cite[Theorem~1.2]{Martinez72} can be rephrased in the langage of abstract convexity as follows: 

\begin{theorem}[Martinez]
Consider a semilattice with the ideal convexity. Then the following are equivalent: 
\begin{itemize}
	\item the ideal convexity is completely distributive, 
	\item each copoint admits an attaching point with a unique copoint,  
	\item each element can be uniquely decomposed as the join of a finite number of pairwise incomparable coprime elements. 
\end{itemize}
\end{theorem}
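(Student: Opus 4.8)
The plan is to route everything through an auxiliary condition. Write $\mathrsfs{C}$ for the ideal convexity of $S$: concretely, $\mathrsfs{C}$ consists of the empty set together with the order ideals $\mathrm{Id}(S)$ (i.e.\ the nonempty lower subsemilattices of $S$), and it is a complete lattice in which meets are intersections and joins are convex hulls of unions. It is moreover algebraic, the compact elements being exactly the principal ideals $\downarrow\!\! x$. I would start by recording two elementary facts. \emph{(a)} The completely-join-prime elements of $\mathrsfs{C}$ are precisely the $\downarrow\!\! j$ with $j$ coprime in $S$: if $j$ is coprime and $\downarrow\!\! j\subseteq\co\bigl(\bigcup_i I_i\bigr)$, then $j\leqslant a_1\oplus\cdots\oplus a_n$ with each $a_k$ in some $I_{i_k}$, hence $j\leqslant a_k$ for some $k$ and $\downarrow\!\! j\subseteq I_{i_k}$; the converse is an easy check (a completely-join-prime ideal is principal with coprime generator). \emph{(b)} An element $x\in S$ has a \emph{unique} copoint at it in $\mathrsfs{C}$ if and only if $x$ is coprime, in which case that copoint is $S\setminus\uparrow\!\! x$: for coprime $j$ the set $S\setminus\uparrow\!\! j$ is a lower subsemilattice avoiding $j$, and every copoint at $j$ --- being a lower set not containing $j$ --- lies inside it, hence equals it; while if $j$ is not coprime one picks $c,d$ with $j\leqslant c\oplus d$, $j\not\leqslant c$, $j\not\leqslant d$, extends $\downarrow\!\! c$ and $\downarrow\!\! d$ to copoints at $j$ via Lemma~\ref{lem:cop}, and notes that a single common copoint would contain $c\oplus d$ and hence $j$. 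Then I introduce
\[
(\ast)\qquad\text{every element of $S$ is the join of a finite family of coprime elements,}
\]
and prove that each of the three listed conditions is equivalent to $(\ast)$.

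\emph{Equivalence of $(\ast)$ with the third condition.} One direction is immediate. For the other, trimming from an expression of $x$ as a finite join of coprimes those terms that lie below another term yields an expression by \emph{pairwise incomparable} coprimes, so existence holds; and uniqueness is then automatic, for if $x=j_1\oplus\cdots\oplus j_m=j'_1\oplus\cdots\oplus j'_n$ with both families consisting of pairwise incomparable coprimes, each $j_i$ (coprime, $\leqslant\bigoplus_l j'_l$) satisfies $j_i\leqslant j'_l$ for some $l$, and that $j'_l\leqslant j_k$ for some $k$; incomparability forces $i=k$, hence $j_i=j'_l$, and symmetry gives $\{j_1,\dots,j_m\}=\{j'_1,\dots,j'_n\}$.

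\emph{Equivalence of $(\ast)$ with the second condition.} By \emph{(b)}, the second condition says exactly that every copoint of $\mathrsfs{C}$ has the form $S\setminus\uparrow\!\! j$ for some coprime $j$. Granting $(\ast)$, let $C$ be a copoint, say a copoint at $x$, and write $x=j_1\oplus\cdots\oplus j_n$ with the $j_k$ coprime; some $j_k\notin C$ (else $x\in C$), whence $C\subseteq S\setminus\uparrow\!\! j_k$ and maximality forces $C=S\setminus\uparrow\!\! j_k$, so $j_k$ is an attaching point of $C$ with a unique copoint. Conversely, granting the second condition, if some $x$ violated $(\ast)$ it would fall outside the ideal $K_0$ generated by the coprimes $\leqslant x$; Lemma~\ref{lem:cop} then yields a copoint $C$ at $x$ with $K_0\subseteq C$, and by hypothesis $C=S\setminus\uparrow\!\! j$ with $j$ coprime, so $x\notin C$ gives $j\leqslant x$, whence $j\in K_0\subseteq C=S\setminus\uparrow\!\! j$, a contradiction.

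\emph{Equivalence of $(\ast)$ with the first condition} is where the real work lies. If $(\ast)$ holds, I would check that $I\mapsto I\cap J$, with $J$ the poset of coprime elements of $S$, is an isomorphism from $\mathrsfs{C}$ onto the lattice $\mathcal{O}(J)$ of lower subsets of $J$ ordered by inclusion --- it is always surjective with $A\mapsto\co(A)$ for a right inverse, it preserves arbitrary meets and joins, and $(\ast)$ is precisely what makes it injective --- so that complete distributivity of $\mathrsfs{C}$ follows from that of every lower-set lattice. For the converse, assuming $\mathrsfs{C}$ completely distributive, I would invoke the structure theory of completely distributive lattices: an algebraic completely distributive lattice is, up to isomorphism, exactly the lattice of lower subsets of a poset, so each of its compact elements is a finite join of completely-join-prime elements; applied to the compact element $\downarrow\!\! x$ together with fact \emph{(a)}, this gives $\downarrow\!\! x=\downarrow\!\! j_1\vee\cdots\vee\downarrow\!\! j_n=\downarrow\!\!(j_1\oplus\cdots\oplus j_n)$ with each $j_k$ coprime, hence $x=j_1\oplus\cdots\oplus j_n$, i.e.\ $(\ast)$. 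The step I expect to be the main obstacle is exactly this last one --- locating the precise form of the structure theorem for algebraic completely distributive lattices and dealing with the harmless bookkeeping around the empty convex set and a possible least element of $S$; should that prove delicate, one can instead simply quote \cite[Theorem~1.2]{Martinez72} and paste it onto the dictionary assembled above.
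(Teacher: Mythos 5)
Your proposal addresses a statement that the paper does not prove at all: the theorem is presented purely as a rephrasing of \cite[Theorem~1.2]{Martinez72} into the language of abstract convexity, and the proof is deferred entirely to Martinez. So there is no internal proof to compare against; what you have written is a self-contained verification that the rephrased statement is actually a theorem, and it is correct. Your route --- funnelling all three conditions through the auxiliary condition $(\ast)$ that every element is a finite join of coprimes, after establishing that the copoints of the ideal convexity with a uniquely-copointed attaching point are exactly the sets $S\setminus\uparrow\!\! j$ with $j$ coprime, and that the completely-join-prime members of the lattice $\mathrsfs{C}$ are exactly the principal ideals of coprime elements --- checks out: the trimming and uniqueness argument for the third condition, both directions of the second condition (including the reduction of non-coprimality to a two-element witness and the production of two distinct copoints via Lemma~\ref{lem:cop}), and the isomorphism $\mathrsfs{C}\cong\mathcal{O}(J)$ under $(\ast)$ are all sound. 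The one step resting on an outside ingredient is the implication from complete distributivity of $\mathrsfs{C}$ back to $(\ast)$, where you need that an algebraic completely distributive lattice is prime-algebraic (equivalently, a down-set lattice), so that its compact elements are finite joins of completely-join-primes; this is a genuine classical theorem (due essentially to Raney, and rediscovered as the ``prime algebraic $=$ completely distributive algebraic'' identification in domain theory), so the appeal is legitimate, but it should be cited precisely rather than gestured at, since it carries the whole weight of that direction. The only bookkeeping worth making explicit is the bottom element: if $S$ has a least element $0$, it is vacuously coprime and can only occur alone in an incomparable family, so uniqueness in the third condition survives provided one fixes a convention about the empty join. Compared with the paper's bare citation, your argument costs a page but buys the explicit dictionary between Martinez's unique-factorization statement and the copoint formulation, which the paper leaves entirely implicit.
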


Decomposing elements as joins of coirreducible or coprime elements has been the subject of a great amount of research in order theory (see e.g.\ Ern\'e \cite{Erne91, Erne06} and references therein, see also Bi\'nczak et al.\ \cite[Theorem~5.4]{Binczak07} on presentable semilattices), and this theorem invites us to look at these past results from an abstract convexity point of view. 

\begin{remark}
Martinez' theorem actually characterizes semilattices that are free $\mathbb{B}$-modules. Indeed, consider in the following lines a semilattice $S$ with a least element $0$, and assume for convenience that $S \neq \{ 0 \}$. 
The last condition in Martinez' theorem says that a subset of the family of coprime elements is a basis (i.e.\ a subset $B$ such that, for every $x$ there is a unique finite -possibly empty- subset of $B$ whose join is $x$). Conversely, assume that the semilattice admits a basis $B$, and let us show that every $b \in B$ is a non-zero coprime element. So let $F$ be a finite subset such that $b \leqslant \bigoplus F$. For all $x \in F$, there is a finite subset $F_x$ of $B$ such that $x = \bigoplus F_x$. Hence, $F' := \bigcup_{x \in F} F_x$ is a finite subset of $B$ whose join is $\bigoplus F$. Since $b \leqslant \bigoplus F$, $F' \cup \{b\}$ is another such subset, so $F' = F' \cup \{b\}$ by definition of $B$. This gives $b \in F'$, i.e.\ $b \in F_x$ for some $x \in F$. This shows that $b \leqslant x$ for some $x \in F$, i.e.\ that $b$ is a coprime element. Also, $b$ is non-zero, otherwise $0 \in B$ would be the join of both the empty set and $\{0\}$. 

Another consequence is that every semilattice that is a free $\mathbb{B}$-module is distributive. For suppose that $x \leqslant y \oplus z$, and let $F$ be a finite subset of a basis $B$ such that $x = \bigoplus F$. Since every element of $F$ is coprime, we have $f \leqslant y$ or $f \leqslant z$ for all $f \in F$. Then, if $y' = \bigoplus \{ f\in F : f \leqslant y \}$ and $z' = \bigoplus \{ f \in F : f \leqslant z \}$, we get $y' \leqslant y$, $z' \leqslant z$, and $x = y' \oplus z'$, which shows distributivity.  

Therefore, if a semilattice $S$ is a free $\mathbb{B}$-module, then it has a unique basis, equal to the subset of its non-zero coprime elements. To see this, let $B$ be a basis of $S$. Since $S$ is distributive, the subset of its coprime elements is $\ex S$, and we have seen that $B \subset (\ex S) \setminus \{ 0\}$. If $x \in (\ex S) \setminus \{0\}$, there exists a nonempty finite subset $F$ of $B$ such that $x = \bigoplus F$. Since $x$ is an extreme point of $S$, we deduce $x \in F$, so that $x \in B$. 

If now we define the \textit{rank} $r$ of a distributive semilattice $S$ as the cardinality of $(\ex S) \setminus \{ 0\}$, then, applying Proposition~\ref{prop:depth2} to $S$ equipped with the discrete topology, one can see that the following conditions are equivalent: 
\begin{itemize}
	\item $S$ is a $\mathbb{B}$-module of finite type, 
	\item $S$ has finite depth, 
	\item $S$ has finite rank, 
	\item $S$ is finite. 
\end{itemize}
In this case,  
the depth $d$ of $S$ equals $r+1$. Moreover, if $S$ is free, then $S$ is in bijection with the collection of subsets of $(\ex S) \setminus \{0\}$, hence has exactly $2^{r}$ elements. 
\end{remark}

\subsection{The order-algebraic convexity of a semilattice}

Quite different from the previous case is the one of the \textit{order-algebraic convexity} of a semilattice, made up of its order-convex subsemilattices, for it involves trees instead of chains. A \textit{tree} is a semilattice in which every principal filter $\uparrow\!\! x$ is a chain. It is an easy task to see that the set of extreme points of a convex subset is the union of its minimal elements and max-coprime elements. 
 
\begin{proposition}
A semilattice with the order-algebraic convexity is a convex geometry if and only if it is a tree.  
In this case, when endowed with a Hausdorff semitopological topology, it satisfies the Krein--Milman property. 
\end{proposition}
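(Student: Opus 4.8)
The argument has three parts: the two implications of the stated equivalence, and then the Krein--Milman statement. The plan is to use the Jamison--Edelman Theorem~\ref{thm:cg} for the equivalence and Theorem~\ref{thm:km} for the topological part. Throughout, "convex'' means "order-convex subsemilattice'', and I use that the order-convex hull of a subsemilattice is again a subsemilattice — so that $\co(A)$ is the order-convex hull of the subsemilattice $\langle A\rangle$ generated by $A$ — that in a tree every interval $[u,v]$ (with $u\leqslant v$) is a chain since it sits inside the chain $\uparrow\!\!u$, and the recalled equality $\ex K=\Min K\cup\{\text{max-coprime elements of }K\}$.

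\emph{Necessity, by contraposition.} If $\uparrow\!\!x$ is not a chain, I take $y,z\in\uparrow\!\!x$ incomparable and put $t:=y\oplus z$, so that $x<y<t$ and $x<z<t$. Since $\langle\{x,y,z\}\rangle=\{x,y,z,t\}$ with $x$ least, $t$ greatest and $y,z$ incomparable, the polytope $P:=\co(\{x,y,z\})$ is exactly the interval $[x,t]$; its minimum $x$ is extreme, but its unique maximal element $t=y\oplus z$, with $y,z\in P\setminus\{t\}$, is not coprime in $P$, so $\ex P=\{x\}$ and $\co(\ex P)=\{x\}\subsetneq P$. Hence $P$ is a polytope that is not the convex hull of its extreme points, and Theorem~\ref{thm:cg} shows the order-algebraic convexity is not a convex geometry.

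\emph{Sufficiency.} Assume $S$ is a tree; by Theorem~\ref{thm:cg} it is enough to verify condition~(\ref{cg2}), that every polytope $P=\co(F)$ equals $\co(\ex P)$. Replace $F$ by $\langle F\rangle$, so that $F$ is a finite subsemilattice and $P$ is its order-convex hull. The crux is the following \emph{key lemma}: in a tree, if $m$ is a maximal element of $P$ that is \emph{not} coprime in $P$, then $m$ is the join of two elements of $F$ strictly below it. Its proof is where the tree hypothesis is really used: non-coprimeness gives (after shrinking the witness) $m=m_1\oplus m_2$ with $m_1,m_2\in P$ incomparable and $<m$; picking $u_i\in F$ with $u_i\leqslant m_i$ one gets $u_1,u_2$ incomparable and $z:=u_1\oplus u_2\leqslant m$, and then comparing $z$ with $m_1$ inside the chain $\uparrow\!\!u_1$ and with $m_2$ inside $\uparrow\!\!u_2$ shows every possibility other than $z=m$ contradicts either $m_1\parallel m_2$ or $z<m$. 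Granting this, I induct on $|F|$: if $F$ has a coirreducible element $c$ that is neither minimal nor maximal in $F$, then $F\setminus\{c\}$ is a subsemilattice with the same order-convex hull $P$ (because $c$ lies in an interval with endpoints in $F\setminus\{c\}$), and the induction hypothesis applies; otherwise every coirreducible of $F$ is minimal or maximal, and then each maximal element of $P$ is either coprime (hence extreme) or, by the key lemma, a join of elements of $F$ below it whose coirreducible constituents are all minimal, hence lies in $\co(\ex P)$ — so $F\subseteq\co(\ex P)$, whence $P\subseteq\co(\ex P)$ by order-convexity of $\co(\ex P)$ and equality follows. I expect the key lemma to be the main technical obstacle; the induction around it is routine finite-semilattice bookkeeping.

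\emph{Krein--Milman property.} Endow the tree $S$ with a Hausdorff semitopological semilattice topology and take a compact convex subset $K$. A tree has breadth at most $2$ (a finite join always coincides with a join of two of its terms, found in the chain above one of them), so $K$ — a compact Hausdorff semitopological semilattice, hence topological, of finite breadth, hence locally convex — satisfies the hypotheses of Theorem~\ref{thm:km} applied to $K$ itself, which presents $K$ as the closed hull $\overline T$ of the subsemilattice $T$ generated by the coirreducible elements of $K$. The key lemma has a compact analogue with the same proof: a maximal element of $K$ that is not coprime is a join of two minimal elements of $K$ (the sets $\downarrow\!\!m_i\cap K$ are compact, so supply minimal elements below $m_1,m_2$ by Wallace's lemma). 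Hence every maximal element of $K$, and then — since $\uparrow\!\!c\cap K$ is a compact chain with a greatest element — every coirreducible element $c$ of $K$, lies in $\co(\ex K)$: such $c$ is squeezed between a minimal element of $K$ and a maximal element of $K$, both in the order-convex subsemilattice $\co(\ex K)$. Therefore $T\subseteq\co(\ex K)\subseteq K$, and taking closures $K=\overline T\subseteq\overline{\co}(\ex K)\subseteq K$, i.e.\ $K=\overline{\co}(\ex K)$; the reverse inclusion is trivial. So $S$ has the Krein--Milman property — the delicate points being, once more, the key lemma and the citation bookkeeping behind "compact semitopological $\Rightarrow$ topological and locally convex'' (via the Remark following Theorem~\ref{thm:km}).
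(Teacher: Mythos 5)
Your two directions of the equivalence are correct, and both take routes genuinely different from the paper's. For necessity the paper argues through copoints and condition~(\ref{cg3}) of Theorem~\ref{thm:cg}: given $x\leqslant a$, $x\leqslant b$ with $b\not\leqslant a$, it picks a copoint $C$ at $b$ containing $\downarrow\!\! a$ and derives $b\geqslant a$ from the convexity of $C\cup\{b\}$ (if $a\oplus b\in C$ then $b\in[x,a\oplus b]\subset C$). Your explicit polytope $[x,y\oplus z]$ with extreme-point set $\{x\}$ reaches the same conclusion via condition~(\ref{cg2}) and is, if anything, more transparent. Your key lemma is also sound: in the case $z\leqslant m_1$ one gets $u_2\leqslant m_1$ and $u_2\leqslant m_2$, so $m_1,m_2$ lie in the chain $\uparrow\!\! u_2$ and are comparable, a contradiction; the remaining cases force $z=m$ as you say, and the induction around it works. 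Note that this purely order-theoretic verification of condition~(\ref{cg2}) for arbitrary (possibly infinite) polytopes is something the paper leaves implicit --- its ``conversely'' paragraph only treats compact convex subsets of a semitopological tree --- so on the algebraic side your argument is actually more self-contained than the printed one.

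The gap is in the Krein--Milman part, at the step ``a compact Hausdorff semitopological semilattice, hence topological, of finite breadth, hence locally convex.'' Nothing available to you supports ``compact Hausdorff semitopological $\Rightarrow$ topological'' in the absence of local convexity: the Remark following Theorem~\ref{thm:km}, which you cite for this, invokes \cite[Theorem~VII-4.8]{Gierz03} under the standing hypothesis that the ambient semilattice is \emph{locally convex} semitopological, while Lawson's proposition ``finite breadth $\Rightarrow$ locally convex'' is proved for \emph{topological} semilattices (its proof uses joint continuity of $(x_1,\dots,x_b)\mapsto x_1\oplus\dots\oplus x_b$). Your chain of deductions is therefore circular --- you need joint continuity to get local convexity and local convexity to get joint continuity --- and the question of whether a compact Hausdorff semitopological semilattice must be topological is exactly the kind of delicate issue the paper is careful about (compare its Problem on locally convact semitopological semilattices). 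The paper's own proof of the Krein--Milman property bypasses Theorem~\ref{thm:km} entirely: it introduces the auxiliary order $y\leqslant_b x\Leftrightarrow x\in\co(\{b,y\})$, observes that its principal ideals are closed using only separate continuity and the closedness of $\uparrow\!\! x$ and $\downarrow\!\! x$, and applies Wallace's lemma twice to produce a minimal element $b$ and a $\leqslant_b$-minimal $y$, which is shown to be max-coprime via the chain $\uparrow\!\! b$; this yields $x\in[b,b\oplus y]\subset\co(\ex K)$, hence $K=\co(\ex K)$ with no closure, no local convexity and no joint continuity. Your second half (maximal non-coprime elements are joins of two minimal elements; coirreducibles are squeezed into intervals with extreme endpoints) is fine, but it rests on the unjustified appeal to Theorem~\ref{thm:km}; you should replace that appeal by a direct argument of the above kind.
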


\begin{proof}
Assume that the order-algebraic convexity of some semilattice $S$ is a convex geometry, and let us show that $S$ is a tree. 
So let $a, b, x \in S$ such that $a \geqslant x$ and $b \geqslant x$. We want to prove that $a$ and $b$ are comparable, so suppose that $b \not\leqslant a$, i.e.\ $b \notin \downarrow\!\! a$. The subset $\downarrow\!\! a$ is convex, so by Lemma~\ref{lem:cop} there exists some copoint $C$ at $b$ containing $\downarrow\!\! a$. In particular, $a, x \in C$. 
Now use the fact that the convexity is a convex geometry: this implies that $C \cup \{ b \}$ is convex (Theorem~\ref{thm:cg}), hence $a \oplus b \in C \cup \{ b \}$. 
If $a \oplus b \in C$, then $b \in [x, a \oplus b] \subset C$, whereas $b \notin C$. Thus, $a \oplus b \in \{ b \}$, i.e.\ $b \geqslant a$. 

Conversely, consider a Hausdorff semitopological tree, and let $K$ be a compact convex subset. We (implicitly) follow the suggestion of proof from \cite[Exercise~I-5.26]{vanDeVel93}. Denote by $\leqslant_b$ the relation $\leqslant_{\{ b \}}$ defined on $K \setminus \{b\}$ (see Paragraph~\ref{ssec:icg}), obviously extended to $K$. 
Then, for all $x, y \in K$, $y \leqslant_b x$ if and only if $x \leqslant b \oplus y$ and $(x \geqslant b$ or $x \geqslant y$). Since the tree is semitopological, the subsets $\uparrow\!\! x$ and $\downarrow\!\! x$ are closed by \cite[Proposition~VI-1.13(ii)]{Gierz03}. Also, the map $y \mapsto b \oplus y$ is continuous, so $\leqslant_b$-principal ideals $\downarrow_b\!\! x = \{ y \in K : y \leqslant_b x \}$ are closed in $K$. 

Now let $x \in K$. If $x$ is minimal in $K$, then $x \in \ex K$. Otherwise, applying Wallace's lemma, there exists some minimal element $b$ of $K$ such that $b < x$ (in particular, $b \in \ex K$). Using Wallace's lemma once more, we find an element $y \in \downarrow_b\!\! x \cap \uparrow\!\! b$, minimal with respect to the partial order $\leqslant_b$. If we show that $y \in \ex K$, we shall have proved that $x \in [b, b\oplus y] \subset \co(\{b, y\}) \subset \co(\ex K)$. 
So write $y \leqslant \bigoplus F$ for some nonempty finite subset $F$ of $K$, and let us see why $y \in F$. In the ambiant tree, $\uparrow b$ is a chain, hence the supremum of $\{ b \oplus f : f \in F\}$ is actually a maximum, i.e.\ there is some $f_0 \in F$ such that $b \oplus f_0 = \bigoplus (b \oplus F) = b \oplus \bigoplus F$. This implies that $b \oplus f_0 \geqslant y \geqslant b$, so that $f_0 \leqslant_b y$. We obtain $y = f_0 \in F$ by minimality of $y$. We conclude that $y$ is max-coprime in $K$, i.e.\ $y \in \ex K$. 
\end{proof}

\subsection{The order-algebraic convexity of a lattice}

Similarly to the above example, the \textit{order-algebraic convexity} of a lattice comprises its order-convex sublattices. The corresponding set of extreme points of a convex subset is the union of its max-coprime and its min-prime (defined dually) elements. Here the condition to get a convex geometry is the same as for ideal convexity. 

\begin{proposition}
A lattice with the order-algebraic convexity is a convex geometry if and only if it is a chain. 
In this case, when endowed with a compatible topology, it satisfies the Krein--Milman property. 
\end{proposition}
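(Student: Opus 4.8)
The plan is to mirror the structure of the proof already given for semilattices with the order-algebraic convexity, adapting each implication to the lattice setting and invoking the analogous facts about chains. First I would prove the ``only if'' direction. Suppose the order-algebraic convexity of a lattice $S$ is a convex geometry; I want to show $S$ is a chain. Take $x, y \in S$ with $x \not\leqslant y$, so $x \notin \downarrow\!\! y$, which is an order-convex sublattice, hence convex. By Lemma~\ref{lem:cop} there is a copoint $C$ at $x$ containing $\downarrow\!\! y$; in particular $y \in C$ and $x \wedge y \in C$ (since $x \wedge y \leqslant y$). By Theorem~\ref{thm:cg}(\ref{cg3}), $C \cup \{x\}$ is convex, so $x \wedge y$ and $x$ both lie in $C \cup \{x\}$ and hence the interval $[x \wedge y, x]$ is contained in $C \cup \{x\}$. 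Now bring in $x \oplus y = x \vee y$: we have $x \vee y \in C \cup \{x\}$ because $C \cup \{x\}$ is a convex (hence sub)semilattice containing $x$ and $y$. If $x \vee y \in C$, then $x \in [x \wedge y, x \vee y] \subset C$, a contradiction; so $x \vee y = x$, i.e.\ $y \leqslant x$. This proves comparability of any two elements, so $S$ is a chain.

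For the ``if'' direction, I would note that in a chain the order-algebraic convexity coincides with the order convexity (every subset closed under $\oplus$ and $\wedge$ that is order-convex is just an interval-closed subset, and conversely), which is a convex geometry by \cite[Exercise~I-2.24]{vanDeVel93} (this is exactly the fact cited in Paragraph~\ref{ssec:icg} for the order convexity on posets). So the first sentence of the proposition follows. For the Krein--Milman property, observe that on a chain equipped with a compatible topology the order-algebraic convexity again coincides with the order convexity, and a compatible topology on a chain-as-poset is in particular semiclosed (principal ideals and filters are closed: they are exactly the polytopes' building blocks, or one invokes the same argument as elsewhere in the paper that a compatible topology on a semilattice makes $\downarrow\!\! x$ closed, together with the dual for the lattice operation). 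Then Theorem~\ref{thm:kmpo} (Krein--Milman for posets with the order convexity) applies verbatim: every compact subset $K$ satisfies $\co(K) = \co(\ex K)$, and in particular every compact convex $K$ equals $\co(\ex K) \subset \overline{\co}(\ex K) \subset K$, giving the Krein--Milman property. Alternatively, since the statement only asks for compact convex subsets, one can cite Proposition~\ref{prop:fc}'s reasoning directly, as the chain case there is identical.

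The only genuine point requiring care — and the step I expect to be the main obstacle — is justifying that a compatible topology on the chain $S$ is semiclosed in the sense of Theorem~\ref{thm:kmpo}, i.e.\ that both $\downarrow\!\! x$ and $\uparrow\!\! x$ are closed. A compatible topology makes polytopes closed; in a chain with the order convexity the polytopes generated by two points are the intervals $[a,b]$, so these are closed, but one still needs the one-sided principal ideals/filters to be closed. In a chain this follows because $\downarrow\!\! x = \bigcap_{y} \big( S \setminus [\,\cdot\,] \big)$-type manipulations do not immediately work; instead the cleanest route is to observe that on a chain the algebraic (semilattice) convexity has polytopes $\co(\{a,b\}) = \uparrow\!\!(a\wedge b)$-style sets only in one direction, so compatibility of the topology for the \emph{order-algebraic} convexity directly forces intervals to be closed, and then a standard argument (or the hypothesis that we are in a topological lattice, so $\oplus$ and $\wedge$ are continuous, whence $\downarrow\!\! x$ and $\uparrow\!\! x$ are closed as in \cite[Proposition~VI-1.13]{Gierz03}) closes the gap. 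I would state this carefully and then conclude by citing Theorem~\ref{thm:kmpo}.
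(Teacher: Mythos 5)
Your proof is correct and follows essentially the same route as the paper's: take a copoint $C$ at $x$ over a convex set containing $y$, use Theorem~\ref{thm:cg} to make $C\cup\{x\}$ convex so that it contains both $x\wedge y$ and $x\oplus y$, and let order-convexity of $C$ force one of these to equal $x$ (the paper seeds the copoint with $\{y\}$ rather than $\downarrow\!\! y$, an immaterial difference), with the converse and the Krein--Milman property obtained exactly as in the paper by identifying the order-algebraic convexity of a chain with its order convexity and invoking Theorem~\ref{thm:kmpo}. Your concern about whether a compatible topology is semiclosed is a fair one, but the paper elides it in just the same way (by deferring to the argument of Proposition~\ref{prop:fc}), and your proposed patch is no less complete than what the paper offers, so this does not count against you.
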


\begin{proof}
Following the lines of the proof of Proposition~\ref{prop:fc}, if $y \not\leqslant x$, there is some copoint $C$ at $x$ containing the convex subset $\{ y \}$. The subset $C \cup \{ x \}$ must be convex if the convexity is a convex geometry, so $y \wedge x \in C \cup \{ x \}$ and $y \oplus x \in C \cup \{ x \}$. If both $y \wedge x$ and $y \oplus x$ are in $C$, then $x \in [y \wedge x, y \oplus x] \subset C$ by order-convexity, which contradicts $x \notin C$. Thus, either $y \oplus x \in \{ x \}$ (which is not possible for we assumed $y \not\leqslant x$) or $y \wedge x \in \{ x \}$, i.e.\ $y > x$. 
\end{proof}

\subsection{The algebraic convexity of a lattice}

A final, still challenging example should be evoked.  
On a lattice, one can consider the \textit{algebraic convexity} made up of its sublattices. An abundant literature of topological flavour exists on lattices, and the toolkit of results on locally convex lattices and compact lattices could let one think that the approach adopted for semilattices in Section~\ref{sec:convsemilat} could be reedited without pain. For instance, \cite[Proposition~VII-2.8]{Gierz03} gives a lattice counterpart to the fundamental theorem \ref{thm:fond}.  
Also, Choe \cite{Choe69a, Choe69b} and Stralka \cite{Stralka70} among others studied topological lattices with small lattices, which are nothing but locally convex topological lattices. 

Unfortunately, a deeper examination of this convexity leads to special difficulties.  
Simply consider the fact that extreme points are the \textit{doubly-irreducible} elements (elements that are simultaneously coirreducible for $\leqslant$ and for $\geqslant$), the existence of which is not guaranteed in general, even in finite distributive lattices (look at the power set, ordered by inclusion, of a set with cardinality $> 2$ for instance, see Figure~\ref{fig:Schema4Chapitre4}). On that subject, see also \cite{Poncet13c}. 

\begin{figure}
	\centering
		\includegraphics[width=0.2\textwidth]{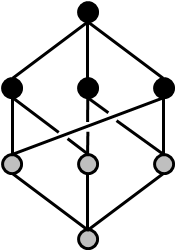}
	\caption{Hasse diagram of the power set of $\{1, 2, 3\}$. The gray (resp.\ black) points are the coirreducible elements with respect to inclusion (resp.\ reverse inclusion). This poset has no doubly-irreducible elements. }
	\label{fig:Schema4Chapitre4}
\end{figure}

The work of Ern\'e \cite{Erne91}, after that of Monjardet and Wille \cite{Monjardet89}, although difficult to interpret, gives some hope in this direction (see also the paper by Berman and Bordalo \cite{Berman98}). 
Rephrasing \cite[Theorem~4.14]{Erne91} for the finite case, one has: 

\begin{proposition}[Monjardet--Wille--Ern\'e]\label{prop:kmlat}
In a finite distributive lattice, the following conditions are equivalent: 
\begin{itemize}
	\item $P$ is principally separated,  
	\item the normal completion of $P$ is a distributive lattice, 
	\item the lattice is generated by its doubly-irreducible elements, 
	\item each coprime is a meet of doubly-irreducible elements, 
	\item for all $p \in P, q \in Q$ with $p \leqslant q$, there exists $r \in P \cap Q : p \leqslant r \leqslant q$, 
\end{itemize}
where $P$ (resp.\ $Q$) denotes the subset of coprime (resp.\ prime) elements. 
\end{proposition}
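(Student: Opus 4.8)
The plan is to obtain Proposition~\ref{prop:kmlat} as the finite specialisation of Erné's \cite[Theorem~4.14]{Erne91} (building on Monjardet and Wille \cite{Monjardet89}), while giving a self-contained treatment of the last three clauses, which are the ones of direct geometric interest here. Throughout, write $L$ for the finite distributive lattice, let $P$ be its poset of coprime (equivalently, join-irreducible) elements, $Q$ its poset of prime (meet-irreducible) elements, and $D = P \cap Q$ the set of doubly-irreducible elements. Birkhoff's representation identifies $L$ with the lattice of down-sets of the poset $P$; dually, in $L$ every element is the join of the coprimes below it and the meet of the primes above it, a coprime $p$ absorbs finite joins (i.e.\ $p \leqslant f_1 \oplus \cdots \oplus f_n$ forces $p \leqslant f_i$ for some $i$), and a prime $q$ absorbs finite meets. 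These facts, together with distributivity, are all that the elementary part of the argument uses.

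I would first settle the equivalences $(3)\Leftrightarrow(4)\Leftrightarrow(5)$. For $(4)\Rightarrow(3)$: each element of $L$ is a join of coprimes, and by hypothesis each coprime is a meet of elements of $D$, so the sublattice generated by $D$ is all of $L$. For $(3)\Rightarrow(4)$: in a distributive lattice generated by $D$ every element, and in particular every coprime $p$, is a join of finite meets of elements of $D$; absorbing the join by coprimality of $p$ shows $p$ equals one such meet, hence $p$ is a meet of doubly-irreducibles. For $(4)\Rightarrow(5)$: if $p\in P$, $q\in Q$, $p\leqslant q$ and $p = d_1 \wedge \cdots \wedge d_n$ with $d_i \in D$, then $q$ absorbs the meet, so $d_i \leqslant q$ for some $i$, and since $p \leqslant d_i$ we may take $r = d_i$. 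For $(5)\Rightarrow(4)$: fix $p \in P$ and put $m = \bigwedge\{d \in D : d \geqslant p\}$; if $m > p$, then because $p$ is the meet of the primes above it there is $q \in Q$ with $q \geqslant p$ and $q \not\geqslant m$, whence $(5)$ supplies $r \in D$ with $p \leqslant r \leqslant q$; but then $m \leqslant r \leqslant q$, contradicting $q \not\geqslant m$. Therefore $m = p$.

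The substantial part — and the step I expect to be the main obstacle — is to tie clauses $(1)$ and $(2)$, which concern the normal (Dedekind--MacNeille) completion $N(P)$ of the \emph{poset} $P$, to the equivalences just established. For a finite distributive $L$ the completion $N(P)$ is the lattice of cut-closed down-sets of $P$; it sits inside $L = \mathcal{O}(P)$ as a meet-subsemilattice but in general as a proper, non-sublattice, and need not be distributive — for instance $N(P) \cong M_3$ when $L = 2^3$, and $N(P) \cong N_5$ when $L = 2\times 3$, in both of which there are no doubly-irreducibles. Here I would follow Erné \cite[Theorem~4.14]{Erne91}: the programme is to compute the cut-closure operator $A \mapsto (A^{u})^{\ell}$ on down-sets of $P$ in terms of the primes $Q$ of $L$, to identify the join- and meet-irreducible cuts of $P$ with the doubly-irreducible elements of $L$, and thereby to show that \emph{principal separation} of $P$ and distributivity of $N(P)$ are each equivalent to the interpolation clause $(5)$, hence to $(3)$ and $(4)$. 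The bookkeeping needed to line up Erné's abstract separation axiom with the concrete clause $(5)$ is the delicate point; once that dictionary is in place, the proposition follows from it together with the elementary cycle $(3)\Leftrightarrow(4)\Leftrightarrow(5)$ above.
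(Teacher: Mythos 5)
The paper offers no proof of this proposition: it is stated purely as a rephrasing, for the finite case, of Ern\'e's Theorem~4.14, which is exactly the reduction you propose for the clauses involving principal separation and the normal completion. Your added self-contained cycle $(3)\Leftrightarrow(4)\Leftrightarrow(5)$ is correct and is a genuine supplement to the paper: the absorption arguments (coprimality of $p$ against the join of meets, primality of $q$ against the meet $d_1\wedge\cdots\wedge d_n$) and the argument that $\bigwedge\{d\in D: d\geqslant p\}$ collapses to $p$ under $(5)$ all go through in a finite distributive lattice, where every element is the join of the coprimes below it and the meet of the primes above it. Two caveats. First, your illustrative claim that $2\times 3$ has no doubly-irreducible elements is false: writing its elements as pairs $(i,j)$, both $(1,0)$ and $(0,2)$ are doubly irreducible; what is true, and what the example actually needs, is that these two elements fail to generate the lattice, consistently with $N(P)\cong N_5$ being non-distributive (the $2^3$ example, by contrast, is as you state). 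Second, as you yourself acknowledge, the equivalence of $(1)$ and $(2)$ with the remaining clauses is not proved but delegated to Ern\'e, so your proposal, exactly like the paper, establishes the proposition only modulo that reference; the ``dictionary'' between Ern\'e's separation axiom and clause $(5)$ is precisely the part neither you nor the paper writes out.
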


The normal completion refers to the smallest complete lattice in which a poset embeds (also called Dedekind--MacNeille completion, or completion by cuts). Principal separation in a poset is the assertion that, for all $x \not\leqslant y$, there are some $p\leqslant x$, $q \geqslant y$ such that $p \not\leqslant q$ and $\uparrow\!p \cup \downarrow\!\! q$ is the whole poset; for complete lattices, this is equivalent to complete distributivity.  

A distributive lattice with the algebraic convexity is then a convex geometry if and only if \textit{every} finite sublattice satisfies the conditions of Proposition~\ref{prop:kmlat} (because a polytope is here necessarily finite).

\begin{problem}
Does every compact locally convex distributive lattice (i.e.\ every completely distributive lattice) satisfy the Krein--Milman property as soon as it is a convex geometry? 
\end{problem}

\section{Conclusion and perspectives}

In a future work, we shall consider the natural (algebraic) convexity on \textit{idempotent modules}. 
We shall also aim at relaxing the Hausdorff hypothesis after the work of Goubault-Larrecq \cite{Goubault08}, who proved a Krein--Milman type theorem for non-Hausdorff \textit{cones} (in the sense of Kei\-mel \cite{Keimel08}). 

\begin{acknowledgements}
I am very grateful to Marianne Akian for her crucial help in the proof of Milman's converse for semilattices, and to St\'ephane Gaubert who pointed out to me the direction of proof of the Krein--Milman theorem. 
I also gratefully thank Prof.\ Jimmie D.\ Lawson for his very motivating suggestions on the non-Hausdorff setting, which may be used in a future work. 
\end{acknowledgements}

\bibliographystyle{plain}

\begin{thebibliography}{99}

\bibitem{Aliprantis06}
Charalambos~D. Aliprantis and Kim~C. Border.
\newblock {\em Infinite dimensional analysis}.
\newblock Springer, Berlin, third edition, 2006.
\newblock A hitchhiker's guide.

\bibitem{Baker69}
Kirby~A. Baker.
\newblock A {K}rein--{M}ilman theorem for partially ordered sets.
\newblock {\em Amer. Math. Monthly}, 76:282--283, 1969.

\bibitem{Bauer60}
Heinz Bauer.
\newblock Minimalstellen von {F}unktionen und {E}xtremalpunkte. {II}.
\newblock {\em Arch. Math.}, 11:200--205, 1960.

\bibitem{Berman98}
Joel Berman and Gabriela Bordalo.
\newblock Finite distributive lattices and doubly irreducible elements.
\newblock {\em Discrete Math.}, 178(1-3):237--243, 1998.

\bibitem{Binczak07}
Grzegorz Bi{\'n}czak, Anna~B. Romanowska, and Jonathan D.~H. Smith.
\newblock Poset extensions, convex sets, and semilattice presentations.
\newblock {\em Discrete Math.}, 307(1):1--11, 2007.

\bibitem{Birkhoff48}
Garrett Birkhoff.
\newblock {\em Lattice theory}.
\newblock American Mathematical Society Colloquium Publications, vol. 25,
  revised edition. American Mathematical Society, New York, NY, 1948.

\bibitem{Birkhoff85}
Garrett Birkhoff and Mary~K. Bennett.
\newblock The convexity lattice of a poset.
\newblock {\em Order}, 2(3):223--242, 1985.

\bibitem{Blyth05}
Thomas~S. Blyth.
\newblock {\em Lattices and ordered algebraic structures}.
\newblock Universitext. Springer-Verlag London Ltd., London, 2005.

\bibitem{Bourbaki81}
Nicolas Bourbaki.
\newblock {\em Espaces vectoriels topologiques. {C}hapitres 1 \`a 5}.
\newblock Masson, Paris, new edition, 1981.
\newblock {\'E}l{\'e}ments de math{\'e}matique.

\bibitem{Briec05}
Walter Briec, Charles~D. Horvath, and Aleksandr~M. Rubinov.
\newblock Separation in {$\mathbb B$}-convexity.
\newblock {\em Pac. J. Optim.}, 1(1):13--30, 2005.

\bibitem{Butkovic07}
Peter Butkovi{\v{c}}, Hans Schneider, and Sergei~N. Sergeev.
\newblock Generators, extremals and bases of max cones.
\newblock {\em Linear Algebra Appl.}, 421(2-3):394--406, 2007.

\bibitem{Choe69a}
Tae~Ho Choe.
\newblock Intrinsic topologies in a topological lattice.
\newblock {\em Pacific J. Math.}, 28:49--52, 1969.

\bibitem{Choe69b}
Tae~Ho Choe.
\newblock On compact topological lattices of finite dimension.
\newblock {\em Trans. Amer. Math. Soc.}, 140:223--237, 1969.

\bibitem{Cohen04}
Guy Cohen, St\'ephane Gaubert, and Jean-Pierre Quadrat.
\newblock Duality and separation theorems in idempotent semimodules.
\newblock {\em Linear Algebra Appl.}, 379:395--422, 2004.
\newblock Tenth Conference of the International Linear Algebra Society.

\bibitem{Develin04}
Mike Develin and Bernd Sturmfels.
\newblock Tropical convexity.
\newblock {\em Doc. Math.}, 9:1--27 (electronic), 2004.

\bibitem{Edalat95}
Abbas Edalat.
\newblock Dynamical systems, measures, and fractals via domain theory.
\newblock {\em Inform. and Comput.}, 120(1):32--48, 1995.

\bibitem{Edelman80}
Paul~H. Edelman.
\newblock Meet-distributive lattices and the anti-exchange closure.
\newblock {\em Algebra Universalis}, 10(3):290--299, 1980.

\bibitem{Edelman85}
Paul~H. Edelman and Robert~E. Jamison.
\newblock The theory of convex geometries.
\newblock {\em Geom. Dedicata}, 19(3):247--270, 1985.

\bibitem{Ellis52}
J.~W. Ellis.
\newblock A general set-separation theorem.
\newblock {\em Duke Math. J.}, 19:417--421, 1952.

\bibitem{Erne91}
Marcel Ern{\'e}.
\newblock Bigeneration in complete lattices and principal separation in ordered
  sets.
\newblock {\em Order}, 8(2):197--221, 1991.

\bibitem{Erne06}
Marcel Ern{\'e}, Mai Gehrke, and Al\v{e}s Pultr.
\newblock Complete congruences on topologies and down-set lattices.
\newblock {\em Appl. Categ. Structures}, 15(1-2):163--184, 2007.

\bibitem{Fan63}
Ky~Fan.
\newblock On the {K}rein--{M}ilman theorem.
\newblock In {\em Proc. {S}ympos. {P}ure {M}ath., {V}ol. {VII}}, pages
  211--219. Amer. Math. Soc., Providence, RI, 1963.

\bibitem{Franklin62}
Stanley~P. Franklin.
\newblock Some results on order-convexity.
\newblock {\em Amer. Math. Monthly}, 69(5):357--359, 1962.

\bibitem{Gaubert07}
St\'{e}phane Gaubert and Ricardo~D. Katz.
\newblock The {M}inkowski theorem for max-plus convex sets.
\newblock {\em Linear Algebra Appl.}, 421(2-3):356--369, 2007.

\bibitem{Gierz03}
Gerhard Gierz, Karl~Heinrich Hofmann, Klaus Keimel, Jimmie~D. Lawson, Michael~W. Mislove, and Dana~S. Scott.
\newblock {\em Continuous lattices and domains}, volume~93 of {\em Encyclopedia
  of Mathematics and its Applications}.
\newblock Cambridge University Press, Cambridge, 2003.

\bibitem{Goubault08}
Jean Goubault-Larrecq.
\newblock A cone theoretic {K}rein--{M}ilman theorem.
\newblock Rapport de recherche LSV-08-18, ENS Cachan, France, 2008.

\bibitem{Helbig88}
Siegfried Helbig.
\newblock On {C}arath\'eodory's and {K}re\u\i n--{M}ilman's theorems in fully
  ordered groups.
\newblock {\em Comment. Math. Univ. Carolin.}, 29(1):157--167, 1988.

\bibitem{HofmannLawson76}
Karl~Heinrich Hofmann and Jimmie~D. Lawson.
\newblock Irreducibility and generation in continuous lattices.
\newblock {\em Semigroup Forum}, 13(4):307--353, 1976/77.

\bibitem{Hofmann76}
Karl~Heinrich Hofmann and Albert Stralka.
\newblock The algebraic theory of compact {L}awson semilattices. {A}pplications
  of {G}alois connections to compact semilattices.
\newblock {\em Dissertationes Math. (Rozprawy Mat.)}, 137:58, 1976.

\bibitem{Horvath96}
Charles~D. Horvath and Juan~Vicente Llinares~Ciscar.
\newblock Maximal elements and fixed points for binary relations on topological
  ordered spaces.
\newblock {\em J. Math. Econom.}, 25(3):291--306, 1996.

\bibitem{Jamison74}
Robert~E. Jamison.
\newblock {\em A general theory of convexity}.
\newblock PhD thesis, University of Washington, Seattle, USA, 1974.

\bibitem{Jamison77}
Robert~E. Jamison.
\newblock Some intersection and generation properties of convex sets.
\newblock {\em Compositio Math.}, 35(2):147--161, 1977.

\bibitem{Jamison80}
Robert~E. Jamison.
\newblock Copoints in antimatroids.
\newblock In {\em Proceedings of the {E}leventh {S}outheastern {C}onference on
  {C}ombinatorics, {G}raph {T}heory and {C}omputing ({F}lorida {A}tlantic
  {U}niv., {B}oca {R}aton, {F}la., 1980), {V}ol. {II}}, volume~29, pages
  535--544, 1980.

\bibitem{Jamison79}
Robert~E. Jamison-Waldner.
\newblock A convexity characterization of ordered sets.
\newblock In {\em Proceedings of the {T}enth {S}outheastern {C}onference on
  {C}ombinatorics, {G}raph {T}heory and {C}omputing ({F}lorida {A}tlantic
  {U}niv., {B}oca {R}aton, {F}la., 1979)}, Congress. Numer., XXIII--XXIV, pages
  529--540, Winnipeg, Man., 1979. Utilitas Math.

\bibitem{Jamison81}
Robert~E. Jamison-Waldner.
\newblock Partition numbers for trees and ordered sets.
\newblock {\em Pacific J. Math.}, 96(1):115--140, 1981.

\bibitem{Jamison82}
Robert~E. Jamison-Waldner.
\newblock A perspective on abstract convexity: classifying alignments by
  varieties.
\newblock In {\em Convexity and related combinatorial geometry ({N}orman,
  {O}kla., 1980)}, volume~76 of {\em Lecture Notes in Pure and Appl. Math.},
  pages 113--150. Dekker, New York, 1982.

\bibitem{Kakutani37}
Shizuo Kakutani.
\newblock Ein {B}eweis des {S}atzes von {M}. {E}idelheit \"uber konvexe
  {M}engen.
\newblock {\em Proc. Imp. Acad.}, 13(4):93--94, 1937.

\bibitem{Keimel08}
Klaus Keimel.
\newblock Topological cones: functional analysis in a {${\rm T}\sb 0$}-setting.
\newblock {\em Semigroup Forum}, 77(1):109--142, 2008.

\bibitem{Klee57}
Victor~L. Klee, Jr.
\newblock Extremal structure of convex sets.
\newblock {\em Arch. Math. (Basel)}, 8:234--240, 1957.

\bibitem{Krein40}
Mark Krein and David Milman.
\newblock On extreme points of regular convex sets.
\newblock {\em Studia Math.}, 9:133--138, 1940.

\bibitem{Lassak86}
Marek Lassak.
\newblock A general notion of extreme subset.
\newblock {\em Compositio Math.}, 57(1):61--72, 1986.

\bibitem{Lawson67}
Jimmie~D. Lawson.
\newblock {\em Vietoris mappings and embeddings of topological semilattices}.
\newblock PhD thesis, University of Tennessee, USA, 1967.

\bibitem{Lawson69}
Jimmie~D. Lawson.
\newblock Topological semilattices with small semilattices.
\newblock {\em J. London Math. Soc. (2)}, 1:719--724, 1969.

\bibitem{Lawson71}
Jimmie~D. Lawson.
\newblock The relation of breadth and codimension in topological semilattices.
  {II}.
\newblock {\em Duke Math. J.}, 38:555--559, 1971.

\bibitem{Lawson73}
Jimmie~D. Lawson.
\newblock Intrinsic topologies in topological lattices and semilattices.
\newblock {\em Pacific J. Math.}, 44:593--602, 1973.

\bibitem{Lawson74}
Jimmie~D. Lawson.
\newblock Joint continuity in semitopological semigroups.
\newblock {\em Illinois J. Math.}, 18:275--285, 1974.

\bibitem{Lawson85}
Jimmie~D. Lawson, Michael Mislove, and Hilary~A. Priestley.
\newblock Infinite antichains in semilattices.
\newblock {\em Order}, 2(3):275--290, 1985.

\bibitem{Lea76}
James~W. Lea, Jr.
\newblock Continuous lattices and compact {L}awson semilattices.
\newblock {\em Semigroup Forum}, 13(4):387--388, 1976/77.

\bibitem{Liukkonen83}
John~R. Liukkonen and Michael~W. Mislove.
\newblock Measure algebras of locally compact semilattices.
\newblock In {\em Recent developments in the algebraic, analytical, and
  topological theory of semigroups ({O}berwolfach, 1981)}, volume 998 of {\em
  Lecture Notes in Math.}, pages 202--214. Springer, Berlin, 1983.

\bibitem{Martinez72}
Jorge Martinez.
\newblock Unique factorization in partially ordered sets.
\newblock {\em Proc. Amer. Math. Soc.}, 33:213--220, 1972.

\bibitem{Milman47}
David Milman.
\newblock Characteristics of extremal points of regularly convex sets.
\newblock {\em Doklady Akad. Nauk SSSR (N.S.)}, 57:119--122, 1947.

\bibitem{Monjardet85}
Bernard Monjardet.
\newblock A use for frequently rediscovering a concept.
\newblock {\em Order}, 1(4):415--417, 1985.

\bibitem{Monjardet89}
Bernard Monjardet and Rudolf Wille.
\newblock On finite lattices generated by their doubly irreducible elements.
\newblock In {\em Proceedings of the {O}berwolfach {M}eeting ``{K}ombinatorik''
  (1986)}, volume~73, pages 163--164, 1989.

\bibitem{Nachbin65}
Leopoldo Nachbin.
\newblock {\em Topology and order}.
\newblock Translated from the Portuguese by Lulu Bechtolsheim. Van Nostrand
  Mathematical Studies, No. 4. D. Van Nostrand Co., Inc., Princeton,
  N.J.-Toronto, Ont.-London, 1965.

\bibitem{Park09}
Sehie Park.
\newblock Comments on abstract convexity structures on topological spaces.
\newblock {\em Nonlinear Anal.}, 72(2):549--554, 2010.

\bibitem{Poncet11}
Paul Poncet.
\newblock {\em Infinite-dimensional idempotent analysis: the role of continuous
  posets}.
\newblock PhD thesis, \'Ecole Polytechnique, Palaiseau, France, 2011.

\bibitem{Poncet13c}
Paul Poncet.
\newblock Pruning a poset with veins.
\newblock http://arxiv.org/abs/1301.0759, 2013.

\bibitem{Singer97}
Ivan Singer.
\newblock {\em Abstract convex analysis}.
\newblock Canadian Mathematical Society Series of Monographs and Advanced
  Texts. John Wiley \& Sons Inc., New York, 1997.
\newblock With a foreword by Aleksandr M. Rubinov, A Wiley-Interscience
  Publication.

\bibitem{Stralka70}
Albert~R. Stralka.
\newblock Locally convex topological lattices.
\newblock {\em Trans. Amer. Math. Soc.}, 151:629--640, 1970.

\bibitem{vanDeVel85}
Marcel L.~J. van~de Vel.
\newblock Lattices and semilattices: a convex point of view.
\newblock In {\em Continuous lattices and their applications ({B}remen, 1982)},
  volume 101 of {\em Lecture Notes in Pure and Appl. Math.}, pages 279--302.
  Dekker, New York, 1985.

\bibitem{vanDeVel93b}
Marcel L.~J. van~de Vel.
\newblock A selection theorem for topological convex structures.
\newblock {\em Trans. Amer. Math. Soc.}, 336(2):463--496, 1993.

\bibitem{vanDeVel93}
Marcel L.~J. van~de Vel.
\newblock {\em Theory of convex structures}, volume~50 of {\em North-Holland
  Mathematical Library}.
\newblock North-Holland Publishing Co., Amsterdam, 1993.

\bibitem{NguyenTheVinh05}
Nguyen~The Vinh.
\newblock Matching theorems, fixed point theorems and minimax inequalities in
  topological ordered spaces.
\newblock {\em Acta Math. Vietnam.}, 30(3):211--224, 2005.

\bibitem{Wallace45}
Alexander~D. Wallace.
\newblock A fixed-point theorem.
\newblock {\em Bull. Amer. Math. Soc.}, 51:413--416, 1945.

\bibitem{Wieczorek89}
Andrzej Wieczorek.
\newblock Spot functions and peripherals: {K}rein--{M}ilman type theorems in an
  abstract setting.
\newblock {\em J. Math. Anal. Appl.}, 138(2):293--310, 1989.

\bibitem{Wirth74}
Andrew Wirth.
\newblock Some {K}rein--{M}ilman theorems for order-convexity.
\newblock {\em J. Austral. Math. Soc.}, 18:257--261, 1974.

\end{thebibliography}

\def\cprime{$'$} \def\cprime{$'$} \def\cprime{$'$} \def\cprime{$'$}
  \def\ocirc#1{\ifmmode\setbox0=\hbox{$#1$}\dimen0=\ht0 \advance\dimen0
  by1pt\rlap{\hbox to\wd0{\hss\raise\dimen0
  \hbox{\hskip.2em$\scriptscriptstyle\circ$}\hss}}#1\else {\accent"17 #1}\fi}
  \def\ocirc#1{\ifmmode\setbox0=\hbox{$#1$}\dimen0=\ht0 \advance\dimen0
  by1pt\rlap{\hbox to\wd0{\hss\raise\dimen0
  \hbox{\hskip.2em$\scriptscriptstyle\circ$}\hss}}#1\else {\accent"17 #1}\fi}

\appendix

\section{Some properties of convexities on ordered structures}

\subsection{Arity}

If the convex sets of a convexity are exactly the subsets $C$ such that $\co(F) \subset C$ for all $F \subset C$ with cardinality $\leqslant n$, then the convexity is of \textit{arity} $\leqslant n$. 
All the convexities considered in this paper are of arity $\leqslant 2$. Table~\ref{tab:ar} summarizes special cases.

\begin{table}[ht]
	\centering
	\begin{tabular}{|l|l||c|c|c|c|c|c|c|c|c|c|c|}
	\hline
	  Structure &   Convexity &         Arity &    Arity $= 1$? \\
	\hline\hline
	poset       & upper       &             1 & yes             \\
	\hline
	poset       & order       & $\leqslant 2$ & iff depth $= 2$ \\
	\hline
	semilattice & algebraic   & $\leqslant 2$ &       iff chain \\
	\hline
	semilattice & ideal       & $\leqslant 2$ &       iff chain \\
	\hline
	semilattice & order-alg.\ & $\leqslant 2$ &       iff chain \\
	\hline
	lattice     & order-alg.\ & $\leqslant 2$ &       iff chain \\
	\hline
	lattice     & algebraic   & $\leqslant 2$ &       iff chain \\
	\hline
	\end{tabular}  
	\caption{Arity. }
	\label{tab:ar}
\end{table}

\subsection{Separation axioms}

Convexities are classically classified according to five basic separation axioms, mimicking the usual conditions $T_0, \ldots, T_4$ in topology: 
\begin{enumerate}
	\item[$S_0$.] for each pair of distinct points, there exists a convex set containing one point but not the other, 
	\item[$S_1$.] all singletons are convex,  
	\item[$S_2$.] two distinct points extend to complementary halfspaces, 
	\item[$S_3$.] each convex subset is an intersection of halfspaces, 
	\item[$S_4$.] two disjoint convex subsets extend to complementary halfspaces, 
\end{enumerate}
where a \textit{halfspace} is a convex subset with a convex complement. 

The $S_4$ separation axiom is also called the Kakutani separation property, since Kakutani \cite{Kakutani37} proved its validity in real vector spaces with their usual (Euclidian) convexity. 
Ellis \cite{Ellis52} gave an abstract version of Kakutani's result, that we recall below. Briec et al.\ \cite[Theorem~2.1]{Briec05} gave a self-contained proof in the framework of finite-dimensional tropical geometry, restating arguments due to van de Vel.  

\begin{proposition}
On a poset, the upper convexity (resp.\ the lower convexity) is $S_0$ (but not $S_1$, unless the partial order is trivial), 
the order convexity is $S_3$, and the order convexity on a chain is $S_4$. 
\end{proposition}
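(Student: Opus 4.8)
The plan is to dispatch the four assertions one by one, in each case producing the required convex sets or halfspaces explicitly from principal filters, principal ideals, and arbitrary up-/down-sets; the definitions in Section~\ref{sec:or} and the appendix are all that is needed.

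For the $S_0$ statement about the upper convexity: given distinct points $x,y$ of the poset $P$, antisymmetry prevents $x\leqslant y$ and $y\leqslant x$ from holding simultaneously, so assume $y\not\leqslant x$; then $\uparrow\!\! y$ is an upper set, hence convex, and it contains $y$ but not $x$. The case of the lower convexity is dual. For the failure of $S_1$: if $\leqslant$ is not the trivial (discrete) order there are $x<y$, and then $\{x\}$ is not an upper set because $y\in\,\uparrow\!\! x$; conversely, when $\leqslant$ is equality every subset is an upper set and $S_1$ holds, so the stated exception is sharp. The same remark applies verbatim to the lower convexity.

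For $S_3$ of the order convexity I would first record that every up-set $U$ of $P$ is a halfspace for the order convexity: an up-set is order-convex, and its complement, being a down-set, is also order-convex; dually, every down-set is a halfspace. Then, for an arbitrary order-convex subset $C$, order-convexity is exactly the statement that $C=\,\uparrow\!\! C\,\cap\,\downarrow\!\! C$, which exhibits $C$ as an intersection of the two halfspaces $\uparrow\!\! C$ and $\downarrow\!\! C$. This yields $S_3$.

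Finally, for $S_4$ of the order convexity on a chain $P$, where the convex sets are precisely the intervals: given disjoint intervals $A,B$, the empty case is trivial, so assume both nonempty. The main obstacle, and the only point requiring real care, is the following \emph{non-crossing lemma}: one of $A,B$ lies entirely below the other, say $a<b$ for all $a\in A$ and $b\in B$. To prove it, suppose it fails; then, using totality and $A\cap B=\emptyset$, one can choose $b_0<a_0$ with $a_0\in A,\ b_0\in B$ and $a_1<b_1$ with $a_1\in A,\ b_1\in B$, and comparing $b_1$ with $a_0$ one gets either $b_1<a_0$, forcing $b_1\in[a_1,a_0]\subseteq A$, or $a_0<b_1$, forcing $a_0\in[b_0,b_1]\subseteq B$ — in either case contradicting disjointness. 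Once the non-crossing is established, $H:=\,\downarrow\!\! A$ is a halfspace (a down-set whose complement is an up-set) with $A\subseteq H$ and $B\cap H=\emptyset$, and $P\setminus H$ is the complementary halfspace containing $B$; this proves $S_4$. Apart from the non-crossing lemma, every step above is immediate from the definitions.
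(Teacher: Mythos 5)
Your proof is correct, and it is in fact more complete than the paper's: the paper writes out only the $S_3$ assertion and treats $S_0$, the failure of $S_1$, and $S_4$ on chains as immediate, whereas you supply all four arguments. On the one part where a comparison is possible, the routes genuinely differ. The paper proves $S_3$ by separating points from convex sets: for each $x \notin C$ it produces a halfspace containing $C$ and missing $x$ (namely $\downarrow\!\! x$ or its complement, according to whether $C$ meets $\downarrow\!\! x$ or $\uparrow\!\! x$, the remaining case contradicting order-convexity), and $C$ is then the intersection of the halfspaces obtained as $x$ ranges over the complement. You instead invoke the characterization $C = \uparrow\!\! C \cap \downarrow\!\! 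C$ of order-convex sets together with the observation that every upper set and every lower set is a halfspace for the order convexity; this is shorter and yields the slightly stronger conclusion that every order-convex set is an intersection of just \emph{two} halfspaces, at the cost of relying on that characterization (which the paper does record in Section~\ref{sec:or}) rather than arguing pointwise. Your treatment of $S_0$ and of the failure of $S_1$ matches what the paper leaves implicit, and your non-crossing lemma for disjoint order-convex subsets of a chain, followed by the halfspace $\downarrow\!\! A$, is a correct and standard way to obtain $S_4$; there is no proof of that part in the paper to compare against. (One cosmetic remark: the order-convex subsets of a chain need not all be intervals $[x,y]$ in the paper's sense, e.g.\ unbounded rays, but your argument only uses order-convexity, so nothing is affected.)
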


\begin{proof}
We prove that the order convexity on a poset is $S_3$. 
Let $C$ be an order-convex subset and $x \notin C$. If $C \cap \downarrow\!\! x = \emptyset$, then $\downarrow\!\! x$ is a halfspace separating $C$ and $x$. The case $C \cap \uparrow\!\! x = \emptyset$ is similar. Otherwise, there exists some $y \in C \cap \downarrow\!\! x$ and $z \in C \cap \uparrow\!\! x$, hence $y \leqslant x \leqslant z$. Since $C$ is order-convex, we have $x \in C$, a contradiction. 
\end{proof}

\begin{proposition}
On a semilattice, the algebraic and the order-algebraic convexities are $S_4$. 
\end{proposition}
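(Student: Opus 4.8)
The plan is, for either convexity, to produce the separating halfspace by a Zorn's lemma argument: among the disjoint pairs $(A',B')$ of convex subsets with $A\subseteq A'$ and $B\subseteq B'$, ordered by componentwise inclusion, the directed-union axiom of a convexity guarantees that chains have upper bounds (componentwise unions along a chain are again convex and disjoint), so there is a maximal such pair $(A^*,B^*)$. Once one shows $A^*\cup B^*=S$, the set $H:=A^*$ is convex with convex complement $B^*$, hence a halfspace with $A\subseteq H$ and $B\subseteq S\setminus H$. The case where $A$ or $B$ is empty is trivial (take $H=\emptyset$ or $H=S$), so one may assume both are nonempty, and everything reduces to proving that a maximal disjoint pair satisfies $A^*\cup B^*=S$.

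For the \textbf{algebraic convexity}, suppose $z\notin A^*\cup B^*$. A short computation shows that the subsemilattice generated by $A^*\cup\{z\}$ is exactly $A^*\cup\{z\}\cup(z\oplus A^*)$, where $z\oplus A^*:=\{z\oplus a:a\in A^*\}$. By maximality this set meets $B^*$; since $A^*\cap B^*=\emptyset$ and $z\notin B^*$, there is $a_0\in A^*$ with $z\oplus a_0\in B^*$, and symmetrically $b_0\in B^*$ with $z\oplus b_0\in A^*$. Then
$$z\oplus a_0\oplus b_0=(z\oplus a_0)\oplus b_0\in B^*\quad\text{and}\quad z\oplus a_0\oplus b_0=(z\oplus b_0)\oplus a_0\in A^*,$$
contradicting $A^*\cap B^*=\emptyset$. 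Hence $A^*\cup B^*=S$.

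For the \textbf{order-algebraic convexity} the difficulty is that the convex hull of $A^*\cup\{z\}$ — the smallest order-convex subsemilattice containing it — is far less transparent, so I would first reduce to a more rigid situation. The key preliminary observation is that one cannot have both $A\cap\downarrow B\neq\emptyset$ and $B\cap\downarrow A\neq\emptyset$: if $a_1\leqslant b_1$ and $b_2\leqslant a_2$ with $a_1,a_2\in A$ and $b_1,b_2\in B$, then $c:=a_1\oplus b_2$ satisfies $a_1\leqslant c\leqslant a_1\oplus a_2\in A$ and $b_2\leqslant c\leqslant b_1\oplus b_2\in B$, so order-convexity of $A$ and of $B$ would force $c\in A\cap B=\emptyset$. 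Swapping the roles of $A$ and $B$ if necessary, assume $A\cap\downarrow B=\emptyset$; then $\uparrow A$ is an up-set subsemilattice, $\downarrow B$ is a down-set subsemilattice, and they are disjoint (a common point $w$ would give $a\leqslant w\leqslant b$ with $a\in A$, $b\in B$, i.e.\ $a\in\downarrow B$). Now I would run Zorn's lemma \emph{inside} the class of disjoint pairs $(A',B')$ with $\uparrow A\subseteq A'$ an up-set subsemilattice and $\downarrow B\subseteq B'$ a down-set subsemilattice; a maximal pair $(A^*,B^*)$ still exists (componentwise unions along chains preserve ``up-set'', ``down-set'', ``subsemilattice'', and disjointness). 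If $z\notin A^*\cup B^*$, the smallest up-set subsemilattice containing $A^*\cup\{z\}$ is simply $A^*\cup\uparrow z$, so by maximality $\uparrow z$ meets $B^*$; but $B^*$ is a down-set, whence $z\in B^*$, a contradiction. Therefore $A^*\cup B^*=S$, and $A^*$ is an up-set subsemilattice — in particular order-convex — whose complement $B^*$ is a down-set subsemilattice, hence an order-algebraic halfspace containing $A$ and missing $B$.

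The routine verifications (that the stated generated sets really are subsemilattices, that chain-unions stay in the relevant classes, the dual case in the order-algebraic reduction) I would leave to the reader. I expect the only genuinely delicate point to be the order-algebraic case, and within it the idea that one should not work with arbitrary order-convex subsemilattices but restrict to up-set/down-set pairs — which the incompatibility lemma $c=a_1\oplus b_2$ makes legitimate; once that reduction is in place the argument is even shorter than in the algebraic case.
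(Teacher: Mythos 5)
Your proof is correct, but it takes a genuinely different route from the paper, which disposes of both cases by citation: the order-algebraic case is referred to van de Vel's Proposition~I-3.12.2, and the algebraic case is deduced from the general theorem that a convexity of arity~$\leqslant 2$ with the Pasch property is $S_4$. Your argument is the self-contained Ellis/Kakutani-style alternative. For the algebraic convexity you are in effect unpacking the machinery the paper invokes: the Zorn's lemma step is the standard skeleton of that general theorem, and your key computation $z\oplus a_0\oplus b_0\in A^*\cap B^*$ is exactly the point where the Pasch property (via idempotency, commutativity and associativity) enters --- so the two proofs are morally the same there, yours being longer but verifiable without opening van de Vel. For the order-algebraic convexity your argument is genuinely independent of the cited one, and the reduction is the right idea: the incompatibility lemma ($c=a_1\oplus b_2$ forced into $A\cap B$ by order-convexity) legitimately lets you replace the pair $(A,B)$ by $(\uparrow\!\! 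A,\downarrow\!\! B)$, after which every step is trivial because up-sets are automatically order-convex subsemilattices and $\downarrow\!\! B$ is a subsemilattice whenever $B$ is. I checked the routine verifications you deferred (the generated set $A^*\cup\{z\}\cup(z\oplus A^*)$ is indeed a subsemilattice; chain unions preserve all the relevant properties; the empty cases are trivial) and they go through. As a bonus your order-algebraic argument yields slightly more than the statement asks: after possibly exchanging $A$ and $B$, the separating halfspace can always be taken to be an upper set with lower-set complement.
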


\begin{proof}
The case of the order-algebraic convexity is treated by van de Vel \cite[Proposition~I-3.12.2]{vanDeVel93}. 
The algebraic convexity is of arity $2$ and clearly satisfies the Pasch property (see the definition in \cite[Paragraph~I-4.9]{vanDeVel93}), hence is $S_4$ by \cite[Theorem~4.12]{vanDeVel93}. 
\end{proof}

\begin{proposition}\label{prop:s2}
On a lattice that is a distributive continuous semilattice (or dually), in particular on a completely distributive lattice, the algebraic convexity is $S_2$. 
\end{proposition}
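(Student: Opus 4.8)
The plan is to exhibit, for any two distinct points of $L$, a \emph{prime filter} separating them, and to notice that such a filter is automatically a halfspace for the algebraic convexity. Since the notion of halfspace and the axiom $S_2$ are self-dual, it is enough to treat the case where the lattice $L$ is a distributive continuous semilattice: the ``dually'' clause follows by applying the conclusion to $L$ equipped with the opposite order, and a completely distributive complete lattice is in particular a distributive continuous semilattice.

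First I would record that, for a coprime element $p$ of $L$, the principal filter $\uparrow\!\! p$ is a halfspace. Indeed $\uparrow\!\! p$ is a filter, hence a sublattice; and its complement $C = \{ a \in L : p \not\leqslant a \}$ is a down-set, so it is closed under $\wedge$, and it is closed under $\oplus$ precisely because $p$ is coprime --- if $a, b \in C$ but $p \leqslant a \oplus b$, then $p \leqslant a$ or $p \leqslant b$, a contradiction --- so $C$ is a sublattice as well. The separation step then runs as follows. Given $x \neq y$ in $L$, we may assume $x \not\leqslant y$ (otherwise exchange them and pass to the complementary halfspace). Because $L$ is a continuous semilattice, its coirreducible elements are order-generating --- this is the very fact used in the proof of Theorem~\ref{thm:km}; see \cite[Corollary~I-3.10]{Gierz03} and \cite[Proposition~2.7]{HofmannLawson76} --- and, $L$ being distributive, every coirreducible $p$ is in fact coprime: from $p \leqslant a \oplus b$ one gets $p = p \wedge (a \oplus b) = (p \wedge a) \oplus (p \wedge b)$, hence $p = p \wedge a$ or $p = p \wedge b$, i.e.\ $p \leqslant a$ or $p \leqslant b$. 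Thus $x = \bigoplus \{ p \leqslant x : p \text{ coirreducible} \}$ is a join of coprime elements below $x$; if every such $p$ satisfied $p \leqslant y$ we would obtain $x \leqslant y$, so some coprime $p$ has $p \leqslant x$ and $p \not\leqslant y$. Then $\uparrow\!\! p$ is a halfspace containing $x$ but not $y$, which is exactly what $S_2$ requires.

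The step I expect to be delicate is the order-generation of coirreducible elements: the results of Gierz et al.\ and of Hofmann--Lawson quoted above are customarily stated for complete continuous (semi)lattices, whereas here $L$ need only be a continuous semilattice (finite joins, filtered meets, continuity in the sense of this paper), so one must either invoke the version of the generation theorem valid for arbitrary continuous semilattices or re-run the interpolation argument without assuming completeness. The remaining verifications --- that $\uparrow\!\! p$ and its complement are sublattices, and that coirreducible $=$ coprime in the distributive case --- are routine.
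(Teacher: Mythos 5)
Your argument is correct and follows essentially the same route as the paper: find a coprime $p$ with $p \leqslant x$ and $p \not\leqslant y$ via order-generation, and observe that $\uparrow\!\! p$ is a halfspace for the algebraic convexity. The only difference is that the paper invokes \cite[Corollary~I-3.13]{Gierz03} directly for the order-generation by coprimes in a distributive continuous semilattice, whereas you rederive it from the coirreducible case (\cite[Corollary~I-3.10]{Gierz03}) plus the observation that coirreducible implies coprime under distributivity --- which is exactly how that corollary is obtained, so the completeness worry you raise is already absorbed by the cited reference.
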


\begin{proof}
By \cite[Corollary~I-3.13]{Gierz03}, if $L$ is a distributive continuous semilattice, then its subset of coprime elements is order-generating. Hence, if $x \not\leqslant y$, one can find some coprime element $p$ with $p \leqslant x$ and $p \not\leqslant y$. This implies that $\uparrow\!\! p$, which is a halfspace with respect to the algebraic convexity on the lattice $L$, separates $x$ and $y$. 
\end{proof}

\begin{table}[ht]
	\centering
	\begin{tabular}{|l|l||c|c|c|c|c|c|c|c|c|c|c|}
	\hline
	  Structure &   Convexity &         $S_1$ &            $S_2$ \\
	\hline\hline
	poset       & upper        & iff antichain &    iff antichain \\
	\hline
	poset       & order       &             yes &                yes \\
	\hline
	semilattice & algebraic   &             yes &                yes \\
	\hline
	semilattice & ideal       & iff antichain &    iff antichain \\
	\hline
	semilattice & order-alg.\ &             yes &                yes \\
	\hline
	lattice     & order-alg.\ &             yes & iff distributive \\
	\hline
	lattice     & algebraic   &             yes &if distrib.\ continuous \\
	\hline
	\end{tabular}  
	\caption{$S_1$ and $S_2$ axioms. All these structures satisfy the $S_0$ axiom. For lattices with the order-algebraic convexity, see \cite[Proposition~I-3.12.3]{vanDeVel93}. }
	\label{tab:sep12}
\end{table}

\begin{table}[ht]
	\centering
	\begin{tabular}{|l|l||c|c|c|c|c|c|c|c|c|c|c|}
	\hline
	  Structure &   Convexity &            $S_3$ &            $S_4$ \\
	\hline\hline
	poset       & upper       &    iff antichain &    iff antichain \\
	\hline
	poset       & order       &                yes &         if chain \\
	\hline
	semilattice & algebraic   &                yes &                yes \\
	\hline
	semilattice & ideal       &    iff antichain &    iff antichain \\
	\hline
	semilattice & order-alg.\ &                yes &                yes \\
	\hline
	lattice     & order-alg.\ & iff distributive & iff distributive \\
	\hline
	lattice     & algebraic   &                ? &                ? \\
	\hline
	\end{tabular}  
	\caption{$S_3$ and $S_4$ axioms. For lattices with the order-algebraic convexity, see \cite[Proposition~I-3.12.3]{vanDeVel93}. }
	\label{tab:sep34}
\end{table}

\subsection{The convex geometry property}

Table~\ref{tab:cg} recalls several results of Section~\ref{sec:cg}. 

\begin{table}[ht]
	\centering
	\begin{tabular}{|l|l||c|c|c|c|c|c|c|c|c|c|c|}
	\hline
	  Structure &   Convexity & Convex geometry & Extreme points \\
	\hline\hline
	poset       & upper       &               yes & minimal \\
	\hline
	poset       & order       &               yes & minimal or maximal \\
	\hline
	semilattice & algebraic   &               yes & coirreducible \\
	\hline
	semilattice & ideal       &       iff chain & max-coprime \\
	\hline
	semilattice & order-alg.\ &        iff tree & minimal or max-coprime \\
	\hline
	lattice     & order-alg.\ &       iff chain & min-prime or max-coprime \\
	\hline
	lattice     & algebraic   &               ? & doubly-irreducible \\
	\hline
	\end{tabular}  
	\caption{Convex geometry property and extreme points. }
	\label{tab:cg}
\end{table}

\end{document}